 \tikzset{>=stealth',
        cvertex/.style={circle,draw=black,inner sep=1pt,outer sep=3pt},
        vertex/.style={circle,fill=black,inner sep=1pt,outer sep=3pt},
        star/.style={circle,fill=yellow,inner sep=0.75pt,outer sep=0.75pt},
        tvertex/.style={inner sep=1pt,font=\scriptsize},
        gap/.style={inner sep=0.5pt,fill=white}}
 \numberwithin{equation}{section}
\newtheorem{theorem}{Theorem}[section]
\newtheorem{lemma}[theorem]{Lemma}
\newtheorem{proposition}[theorem]{Proposition}
\newtheorem{corollary}[theorem]{Corollary}
\newtheorem{assumption}[theorem]{Assumption}
\theoremstyle{definition}
\newtheorem{definition}[theorem]{Definition}
\newtheorem{example}[theorem]{Example}
\theoremstyle{remark}
\newtheorem{remark}[theorem]{Remark}
\newtheorem{remarks}[theorem]{Remarks}
\newcommand{\kk}{\ensuremath{\Bbbk}} 
\newcommand{\CC}{\ensuremath{\mathbb{C}}}
\newcommand{\NN}{\ensuremath{\mathbb{N}}} 
\newcommand{\QQ}{\ensuremath{\mathbb{Q}}}
\newcommand{\ZZ}{\ensuremath{\mathbb{Z}}} 
\newcommand{\cM}{\mathcal{M}}
\newcommand{\cO}{\mathcal{O}}
\newcommand{\one}{\ensuremath{(\mathrm{i})}}
\newcommand{\two}{\ensuremath{(\mathrm{ii})}}
 \newcommand{\Amod}{A{\operatorname{-mod}}}
\newcommand{\ACmod}{A_C{\operatorname{-mod}}}
\DeclareMathOperator{\Cl}{Cl}
\DeclareMathOperator{\cok}{cok}
\newcommand{\Dfinb}{D^b_{\operatorname{fin}}}
\renewcommand{\div}{\operatorname{div}}
\DeclareMathOperator{\End}{End} 
\DeclareMathOperator{\Ext}{Ext}
\newcommand{\ghilb}{\ensuremath{G}\operatorname{-Hilb}}
\DeclareMathOperator{\git}{\!/\!\!/\!} 
\DeclareMathOperator{\GL}{GL} 
\DeclareMathOperator{\Gr}{Gr} 
\DeclareMathOperator{\Hom}{Hom}
\DeclareMathOperator{\head}{hd}
\DeclareMathOperator{\id}{id}
\DeclareMathOperator{\im}{Im}
\DeclareMathOperator{\inc}{inc}
\DeclareMathOperator{\Irr}{Irr}
\DeclareMathOperator{\Spe}{Sp}
\newcommand{\Kfin}{K_{\operatorname{fin}}}
\newcommand{\Knum}{K^{\operatorname{num}}}
\newcommand{\Knumc}{K^{\operatorname{num}}_c}
\DeclareMathOperator{\Ker}{Ker}
\DeclareMathOperator{\Coker}{Coker}
\DeclareMathOperator{\Pic}{Pic}
\DeclareMathOperator{\rank}{rk}
\DeclareMathOperator{\SO}{SO} 
\DeclareMathOperator{\Spec}{Spec}
\DeclareMathOperator{\SL}{SL} 
\DeclareMathOperator{\supp}{supp}
\DeclareMathOperator{\tail}{tl}
\newcommand{\vv}{\bm{\mathrm v}} 
\DeclareMathOperator{\module}{mod}
\DeclareMathOperator{\rk}{rk}
\begin{document}

\title{Multigraded linear series and recollement}

\author{Alastair Craw} 
\address{Department of Mathematical Sciences, 
University of Bath, 
Claverton Down, 
Bath BA2 7AY, 
United Kingdom.}
\email{a.craw@bath.ac.uk}
\urladdr{http://people.bath.ac.uk/ac886/}

\author{Yukari Ito}
\address{Graduate School of Mathematics, Nagoya University, Furocho, Chikusaku,
Nagoya 464-8602, Japan / Kavli Institute for the Physics, Mathematics of the Universe, The University of Tokyo,
5-1-5 Kashiwanoha, Kashiwa, Chiba, 277-8583, Japan}
\email{y-ito@math.nagoya-u.ac.jp / yukari.ito@ipmu.jp}
\urladdr{http://www.math.nagoya-u.ac.jp/\scriptsize{$\sim$}y-ito/}

\author{Joseph Karmazyn} 
\address{School of Mathematics and Statistics,
University of Sheffield,
Hicks Building,
Hounsfield Road,
Sheffield,
S3 7RH.}
\email{j.h.karmazyn@sheffield.ac.uk}
\urladdr{http://www.jhkarmazyn.staff.shef.ac.uk/}
\date{\today}

\subjclass[2010]{14A22 (Primary); 14E16, 16G20, 18F30 (Secondary).}
\keywords{Linear series, moduli space of quiver representations, special McKay correspondence, noncommutative crepant resolutions.}

\begin{abstract}
 Given a scheme $Y$ equipped with a collection of globally generated vector bundles $E_1, \dots, E_n$, we study the universal morphism from $Y$ to a fine moduli space $\mathcal{M}(E)$ of cyclic modules over the endomorphism algebra of $E:=\mathcal{O}_Y\oplus E_1\oplus\cdots \oplus E_n$. This generalises the classical morphism to the linear series of a basepoint-free line bundle on a scheme. We describe the image of the morphism and present necessary and sufficient conditions for surjectivity in terms of a recollement of a module category. When the morphism is surjective, this gives a fine moduli space interpretation of the image, and as an application we show that for a small, finite subgroup $G\subset \GL(2,\kk)$,  every sub-minimal partial resolution of $\mathbb{A}^2_\kk/G$ is isomorphic to a fine moduli space $\mathcal{M}(E_C)$ where $E_C$ is a summand of the bundle $E$ defining the reconstruction algebra. We also consider applications to Gorenstein affine threefolds, where Reid's recipe sheds some light on the classes of algebra from which one can reconstruct a given crepant resolution. 
 \end{abstract}

\maketitle
\tableofcontents

\section{Introduction}
 The study of an algebraic variety in terms of the morphisms to the linear series of basepoint-free line bundles has always been a central tool in algebraic geometry. Here we extend this notion to the multigraded linear series of a collection of globally generated vector bundles on a scheme, thereby unifying several constructions from the literature (see \cite{CrawSmith08, CrawSpecial11} and \cite[Section~5]{Craw11}). Our primary goal is to provide new, geometrically significant moduli space descriptions of any given scheme, and we illustrate this in several families of examples. 
 
 \subsection*{Multigraded linear series}
 To be more explicit, let $Y$ be a scheme that is projective over an affine scheme of finite type over $\kk$, an algebraically closed field of characteristic zero. Given a collection $E_1, \dots, E_n$ of effective vector bundles on $Y$,  define $E:=\bigoplus_{0\leq i\leq n} E_i$ where $E_0$ is the trivial line bundle on $Y$. Let $A:=\End_Y(E)$ denote the endomorphism algebra and consider the dimension vector $\vv:=(v_i)$ given by $v_i:=\rk(E_i)$ for $0\leq i\leq n$. We define the \emph{multigraded linear series} of $E$ to be the fine moduli space $\mathcal{M}(E)$ of $0$-generated $A$-modules of dimension vector $\vv$ (see Definition~\ref{def:multigradedlinearseries}).  The universal family on $\mathcal{M}(E)$ is a vector bundle $T=\bigoplus_{0\leq i\leq n} T_i$ together with a $\kk$-algebra homomorphism $A\to \End(T)$, where $T_i$ is a tautological vector bundle of rank $v_i$ for $1\leq i\leq n$ and $T_0$ is the trivial line bundle. 
 
 Our first main result (see Theorem~\ref{thm:multigradedlinearseries}) generalises the classical morphism $\varphi_{\vert L\vert}\colon Y\to \vert L\vert$ to the linear series of a single basepoint-free line bundle $L$ on $Y$, or the morphism to a Grassmannian defined by a globally generated vector bundle on a projective variety:
 
\begin{theorem}
\label{thm:Intromultigradedlinearseries}
 If the vector bundles $E_1,\dots, E_n$ are globally generated, then there is a morphism $f\colon Y\to \cM(E)$ satisfying $E_i=f^*(T_i)$ for $0\leq i\leq n$ whose image is isomorphic to the image of the morphism $\varphi_{\vert L\vert}\colon Y\to \vert L\vert$ to the linear series of $L:=\bigotimes_{1\leq i\leq n} \det(E_i)^{\otimes j}$ for some $j>0$.
\end{theorem}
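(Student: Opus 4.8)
The plan is to obtain $f$ from the universal property of the fine moduli space $\cM(E)$ (Definition~\ref{def:multigradedlinearseries}), and then to identify its image by exploiting the ample tautological bundle. First I would exhibit $E$, viewed as a sheaf of left $A$-modules, as a flat family over $Y$ of $0$-generated $A$-modules of dimension vector $\vv$. Flatness is automatic since $E$ is locally free, and each fibre $E\otimes k(y)$ has dimension vector $\vv$ because $\rk(E_i)=v_i$. The substantive point is $0$-generation of the fibres: since $\Hom_Y(E_0,E_i)=H^0(E_i)$, the $A$-action on a fibre is controlled by the evaluation maps $H^0(E_i)\otimes k(y)\to E_i\otimes k(y)$, and these are surjective exactly because the $E_i$ are globally generated. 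Hence each fibre is generated by its degree-$0$ component, and the universal property supplies a unique morphism $f\colon Y\to\cM(E)$ with $f^*T_i\cong E_i$ for all $i$, as required.

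Next I would use that $\cM(E)$ is a King-stability GIT quotient carrying the tautological line bundle $\mathcal{L}:=\bigotimes_{i=1}^n\det(T_i)$, which is ample relative to the affine moduli space of semisimple $A$-modules; thus $\mathcal{L}^{\otimes j}$ is very ample relative to the base for some $j>0$. Since $\det$ commutes with pullback and $f^*T_i=E_i$, I obtain $f^*\mathcal{L}=\bigotimes_{i=1}^n\det(E_i)=:M$, and therefore $f^*(\mathcal{L}^{\otimes j})=L$.

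Finally, fixing such a $j$ and embedding $\iota\colon\cM(E)\hookrightarrow\PP$ by means of $\mathcal{L}^{\otimes j}$, the composite $\iota\circ f$ is the morphism attached to the subspace $W=\im\!\big(H^0(\mathcal{L}^{\otimes j})\to H^0(L)\big)\subseteq H^0(L)$, so that $\im(f)\cong\im(\varphi_W)$. To compare $\varphi_W$ with the complete series $\varphi_{\vert L\vert}$, I would argue at the level of contracted curves: a curve $C\subseteq Y$ is contracted by $f$ if and only if $\mathcal{L}\cdot f(C)=M\cdot C=0$, which is precisely the condition that $C$ be contracted by $\varphi_{\vert L\vert}$. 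Because $M=\bigotimes_{i}\det(E_i)$ is globally generated, hence semiample with finitely generated section algebra, for $j$ large and sufficiently divisible the image of $\varphi_{\vert L\vert}$ stabilises to the semiample model $\operatorname{Proj}\bigoplus_{d\ge 0}H^0(Y,M^{\otimes d})$; since $f$ and $\varphi_{\vert L\vert}$ contract the same curves and both carry ample polarisations restricting to $M$, their images are canonically identified.

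I expect this last step to be the main obstacle: the sections pulled back from $\cM(E)$ span only a subspace $W\subseteq H^0(L)$, so one cannot directly equate $\varphi_W$ with $\varphi_{\vert L\vert}$ at the level of linear systems. The resolution is that ampleness of $\mathcal{L}$ on $\cM(E)$ rigidifies the contraction, pinning down the contracted locus and forcing $\im(f)\cong\im(\varphi_{\vert L\vert})$ once $j$ is chosen large enough; verifying the ampleness of $\mathcal{L}$ and the identity $f^*\mathcal{L}=M$ is the technical heart of the argument.
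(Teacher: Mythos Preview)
Your construction of $f$ from the universal property, via the observation that global generation of the $E_i$ makes each fibre of $E$ a $0$-generated $A$-module, together with the identification $f^*\mathcal{L}=\bigotimes_i\det(E_i)$, is exactly what the paper does in Proposition~\ref{prop:0-generated} and Lemma~\ref{lem:universal}\one. The paper then finishes more directly than you do: since $L_\theta$ is chosen very ample (this is the role of $j$ in Definition~\ref{def:multigradedlinearseries}), there is a closed immersion $g\colon\cM(E)\hookrightarrow|L_\theta|$, and the paper asserts that the composite $g\circ f$, which pulls back $\cO(1)$ to $L$, coincides with $\varphi_{|L|}$. Since $g$ is a closed immersion, $\im(f)\cong\im(g\circ f)=\im(\varphi_{|L|})$ and the proof ends there.

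You are right to flag that $g\circ f$ is a priori only $\varphi_W$ for the subspace $W=f^*H^0(\cM(E),L_\theta)\subseteq H^0(Y,L)$, but your proposed workaround via contracted curves and semiample models does not close this gap. The claim that two morphisms pulling back ample bundles to the same $M$ and contracting the same curves must have isomorphic images is false: take $Y=\PP^1$ with $M=\cO(3)$ and compare the twisted cubic embedding $\varphi_{|M|}\colon\PP^1\hookrightarrow\PP^3$ with a generic linear projection $\varphi_W\colon\PP^1\to\PP^2$ onto a nodal plane cubic. Both contract nothing and both satisfy $\varphi^*\cO(1)=\cO(3)$, yet the images are not isomorphic. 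Matching contracted loci controls the Stein factorisation, not the (possibly non-normal) scheme-theoretic image. If you want to salvage your route you would need to show that $\im(\varphi_W)$ is already normal, or---closer to what the paper's one-line identification implicitly uses---argue directly that the semi-invariants of weight $\theta$ on the representation variety surject onto $H^0(Y,L)$, so that in fact $W=H^0(Y,L)$ and no comparison of images is needed.
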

 
A statement similar to Theorem~\ref{thm:Intromultigradedlinearseries} holds if we replace $\mathcal{M}(E)$ by a product of Grassmannians over $\Gamma(\mathcal{O}_Y)$. However, the dimension of this product is higher than that of $\mathcal{M}(E)$ in general, and $f$ is almost never an isomorphism, i.e., $f$ does not provide a moduli space description of $Y$.  
 
 If the line bundle $\bigotimes_{1\leq i\leq n} \det(E_i)$ is ample, then after taking a multiple of a linearisation if necessary (see Remark~\ref{rem:ample}) the resulting universal morphism $f\colon Y\to \mathcal{M}(E)$ is a closed immersion and it is natural to ask whether $f$ is surjective, in which case $f$ presents $Y$ as the fine moduli space $\mathcal{M}(E)$. Even when $Y$ is isomorphic to $\mathcal{M}(E)$, one can sometimes gain more insight by deleting summands of $E$. Indeed, if $C\subseteq \{0,1,\dots, n\}$ is a subset containing $0$ then the subbundle $E_C:= \bigoplus_{i\in C} E_i$ of $E$ has the trivial line bundle $E_0$ as a summand, and Theorem~\ref{thm:Intromultigradedlinearseries} gives a universal morphism 
 \begin{equation}
 \label{eqn:introgC}
 g_C\colon \mathcal{M}(E)\longrightarrow \mathcal{M}(E_C)
 \end{equation}
 between multigraded linear series which can lead to a more \emph{geometrically significant} moduli space description of $Y$. 

A moduli construction determined by a tilting bundle is of clear geometric significance. For a smooth variety $Y$ admitting a tilting bundle $E$, Bergman-Proudfoot~\cite[Theorem~2.4]{BP08} showed that $f$ is an isomorphism onto a connected component of $\cM(E)$. The goal of this paper is to establish several situations in which $f$ is an isomorphism onto $\cM(E)$ itself, thereby giving a moduli space description of $Y$. We do not assume that $Y$ is smooth (it is singular in Theorem~\ref{thm:introsurjectioncontraction}\two), and while $E$ may be a tilting bundle, we do not demand this much; after all, one does not require every indecomposable summand of Beilinson's tilting bundle in order to reconstruct $\mathbb{P}^n$.
  
 \subsection*{The Special McKay correspondence}
 Our second main result illustrates this phenomenon. Let $G\subset \GL(2,\kk)$ be a finite subgroup without pseudo-reflections, write $\Irr(G)$ for the set of isomorphism classes of irreducible representations of $G$, and let $Y$ denote the minimal resolution of $\mathbb{A}^2_\kk/G$. Generalising the work of Ito--Nakamura~\cite{ItoNakamura99} for a finite subgroup of $\SL(2,\kk)$, Kidoh~\cite{Kidoh01} and Ishii~\cite{Ishii02} proved that $Y$ is isomorphic to the $G$-Hilbert scheme, that is, the fine moduli space of $G$-equivariant coherent sheaves of the form $\mathcal{O}_Z$ for subschemes $Z\subset \mathbb{A}^2_\kk$ such that $\Gamma(\mathcal{O}_Z)$ is isomorphic to the regular representation of $G$. If we write 
 \[
 T:=\bigoplus_{\rho\in \Irr(G)} T_\rho^{\oplus\dim(\rho)}
 \]
 for the tautological bundle on the $G$-Hilbert scheme, then since $\End_Y(T)$ is isomorphic to the skew group algebra (see Lemma \ref{lem:TautologicalBundleEndomorphismAlgebra}), it follows that the minimal resolution $Y\cong \ghilb$ is isomorphic to the multigraded linear series $\mathcal{M}(T)$. When $G$ is a finite subgroup of $\SL(2,\kk)$, $T$ is a tilting bundle on $Y$ by work of Kapranov--Vasserot~\cite{KapranovVasserot00}, so $Y$ is derived equivalent to the category of modules over the endomorphism algebra of $T$. However, this is false in general; put simply, the $G$-Hilbert scheme is the wrong moduli description of the minimal resolution of $\mathbb{A}^2_\kk/G$ unless $G$ is a finite subgroup of $\SL(2,\kk)$. 
 
 A more natural moduli space description of $Y$ comes from the Special McKay correspondence of the finite subgroup $G\subset \GL(2,\kk)$. For the set $\Spe(G):= \{ \rho \in \Irr(G) \mid H^1(T_{\rho}^{\vee})=0 \}$ of special representations, it follows from Van den Bergh~\cite{VdB04Duke} that the \emph{reconstruction bundle}
 \[
 E:=\bigoplus_{\rho\in \Spe(G)} T_\rho
 \]
 is a tilting bundle on $Y$, so $Y$ is derived equivalent to the module category of the endomorphism algebra of $E$, that is, to the category of modules over the reconstruction algebra studied by Wemyss~\cite{WemyssGL211}. Our second main result (see Proposition~\ref{prop:reconstructionisomorphism} and Theorem~\ref{thm:surjectioncontraction}) shows that $E$ contains enough information to reconstruct $Y$, and hence provides a moduli space description that trumps the $G$-Hilbert scheme in general:
 
\begin{theorem} 
\label{thm:introsurjectioncontraction}
 Let $G\subset \GL(2,\kk)$ be a finite subgroup without pseudo-reflections. Then:
 \begin{enumerate}
 \item[\one] the minimal resolution $Y$ of $\mathbb{A}^2_\kk/G$ is isomorphic to the multigraded linear series $\mathcal{M}(E)$ of the reconstruction bundle; and
 \item[\two] for any partial resolution $Y^\prime$ such that the minimal resolution $Y\to \mathbb{A}^2_\kk/G$ factors via $Y^\prime$, there is a summand $E_C\subseteq E$ such that $Y^\prime$ is isomorphic to $\mathcal{M}(E_{C})$.
 \end{enumerate}
\end{theorem}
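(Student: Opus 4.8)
The plan for part \one\ is to realise the desired isomorphism through the universal morphism \eqref{eqn:introgC}. Recall from the discussion above that the minimal resolution already arises as $Y\cong\mathcal{M}(T)$ for the full tautological bundle $T$, and that $E=\bigoplus_{\rho\in\Spe(G)}T_\rho$ is a direct summand of $T$, each summand being globally generated as already used in the identification $Y\cong\mathcal{M}(T)$. Applying \eqref{eqn:introgC} to this summand produces a morphism $f\colon Y=\mathcal{M}(T)\to\mathcal{M}(E)$ with $f^*(T_\rho)=T_\rho$ for each special $\rho$. First I would check that $f$ is a closed immersion: under the Special McKay correspondence the nontrivial special representations $\rho$ are in bijection with the exceptional curves $D_\rho$ of $Y$, with $\deg\bigl(\det(T_\rho)\vert_{D_{\rho'}}\bigr)=\delta_{\rho\rho'}$, so $L:=\bigotimes_{\rho\in\Spe(G)}\det(T_\rho)$ meets every exceptional curve positively and is therefore ample relative to $\mathbb{A}^2_\kk/G$; Theorem~\ref{thm:Intromultigradedlinearseries} together with Remark~\ref{rem:ample} then identifies the image of $f$ with that of the closed immersion $\varphi_{\vert L\vert}$. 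Since $E$ is a tilting bundle by Van den Bergh~\cite{VdB04Duke} and $Y$ is smooth, Bergman--Proudfoot~\cite{BP08} shows that $f$ is moreover an isomorphism onto a connected component of $\mathcal{M}(E)$, so everything reduces to proving that $f$ is surjective.

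The hard part of part \one\ is precisely this surjectivity, which I would extract from the necessary and sufficient condition recorded in our first main result (Theorem~\ref{thm:multigradedlinearseries}). Writing $A:=\End_Y(E)$ for the reconstruction algebra, the points of $\mathcal{M}(E)$ lying off the image of $f$ are controlled by the recollement that separates the deleted, non-special summands of $T$ from $E$. The plan is to show that no $0$-generated $A$-module of dimension vector $\vv$ is supported on this deleted part, so that every such module is globally generated in the sense detected by $f$ and hence lies in its image; this is where the combinatorics of the Special McKay correspondence enters, since the special bundles alone already generate the fibres that define a point of the moduli space. Verifying this recollement condition is the main obstacle, and I expect it to be the technical core of Proposition~\ref{prop:reconstructionisomorphism}.

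For part \two, I would begin by classifying the intermediate schemes. As $Y$ is the minimal resolution of the surface $\mathbb{A}^2_\kk/G$, any partial resolution $Y^\prime$ through which $Y\to\mathbb{A}^2_\kk/G$ factors is obtained by contracting some subset of the exceptional curves, and conversely every subset is contractible since a principal submatrix of a negative-definite intersection matrix remains negative definite. Using the bijection between nontrivial special representations and exceptional curves, I would set $C:=\{0\}\cup\{\rho\in\Spe(G): D_\rho\ \text{is not contracted in}\ Y^\prime\}$ and $E_C:=\bigoplus_{\rho\in C}T_\rho$, so that $E_C$ records exactly the curves surviving on $Y^\prime$. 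The morphism \eqref{eqn:introgC} then gives $g_C\colon Y=\mathcal{M}(E)\to\mathcal{M}(E_C)$, and since $L_C:=\bigotimes_{\rho\in C,\,\rho\neq 0}\det(T_\rho)$ has degree zero precisely on the contracted curves and positive degree on every surviving curve, Theorem~\ref{thm:Intromultigradedlinearseries} identifies the image of $g_C$ with the image of $\varphi_{\vert L_C\vert}$, which contracts exactly the chosen curves and is therefore $Y^\prime$.

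It then remains, exactly as in part \one, to prove that $g_C$ is surjective, so that $\mathcal{M}(E_C)\cong Y^\prime$ rather than merely containing $Y^\prime$ as its image. The new difficulty is that $E_C$ need not be a tilting bundle, so Bergman--Proudfoot is unavailable and the argument must run entirely through the surjectivity criterion of Theorem~\ref{thm:multigradedlinearseries}, now applied to the idempotent subalgebra $\End_Y(E_C)=e_CAe_C$. My plan is to compare the recollement governing $e_CAe_C$ with the one already analysed for $A$ in part \one, transferring the absence of spurious $0$-generated modules from $\mathcal{M}(E)$ to $\mathcal{M}(E_C)$ along $g_C$. Making this transfer uniform over an arbitrary contracted set—in particular controlling the modules supported on the contracted curves—is the principal obstacle, and is what Theorem~\ref{thm:surjectioncontraction} must establish.
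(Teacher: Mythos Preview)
Your high-level strategy matches the paper: for each part, establish that the universal morphism is a closed immersion via ampleness of $\bigotimes_{\rho}\det(T_\rho)$, and then prove surjectivity. Your ampleness arguments are correct and essentially identical to the paper's. The appeal to Bergman--Proudfoot in part~\one\ is harmless but unnecessary once you know the morphism is a closed immersion.

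The genuine gap is that you have no mechanism for surjectivity in either part. Your phrase ``show that no $0$-generated $A$-module of dimension vector $\vv$ is supported on this deleted part'' is not well-posed: the points of $\mathcal{M}(E)$ are $A$-modules (where $A=\End_Y(E)$), and the ``deleted part'' lives on the larger algebra $B$ Morita equivalent to the skew group algebra; the question is whether each such $A$-module $N_x$ lifts to a $0$-generated $B$-module of the correct dimension vector, which by Proposition~\ref{prop:submoduleSurjection} amounts to asking whether $j_!(N_x)$ surjects onto something of dimension $\vv_{\Irr}$. For part~\one\ the paper computes the dimension vector of $j_!(N_x)$ directly: using the Ishii--Ueda semiorthogonal decomposition, $j_!(N_x)$ lies in the essential image of $\Phi_{\Irr}$, and an Euler-form calculation against the perfect pairing on $\Knumc(Y)$ forces $[\Psi_{\Irr}(j_!(N_x))]=[\mathcal{O}_y]$, hence $\dim j_!(N_x)=\vv_{\Irr}$ on the nose. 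Nothing in your proposal anticipates this $K$-theoretic step.

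For part~\two\ your plan to ``transfer'' surjectivity from $\mathcal{M}(E)$ to $\mathcal{M}(E_C)$ along $g_C$ is not how the paper proceeds, and it is unclear how such a transfer could work, since the recollement for $A_C\subset A$ is a different recollement from the one used in part~\one. The paper instead argues by contradiction using the derived equivalence $\Phi$ for the tilting bundle $E$ on the smooth surface $Y$: if $j_!(N_x)$ admits no surjection of the required dimension, then Lemma~\ref{lem:nonzerohoms} gives $\Hom(\mathcal{E},\mathcal{O}_y)=\Hom(\mathcal{O}_y,\mathcal{E})=0$ for all $y$, Serre duality then kills $\Hom^2$, and the Bridgeland--Maciocia criterion forces $\mathcal{E}$ to have homological dimension zero and hence codimension-zero support, contradicting properness. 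This geometric vanishing argument is the missing idea in your outline.
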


 In other words, for a finite subgroup $G\subset \GL(2,\kk)$, the minimal resolution of $\mathbb{A}^2_\kk/G$ can be obtained directly from the special representations. The statement of part \one\ is due originally to Karmazyn~\cite[Corollary~5.4.5]{Karmazyn14}, while an analogue of part \two\ in the complete local setting can be deduced by combining Iyama--Kalck--Wemyss--Yang~\cite[Theorem~4.6]{KIWY} with \cite[Corollary~5.2.5]{Karmazyn14}. Note however that our approach is completely different in each case, and is closer in spirit to the geometric construction of the Special McKay correspondence for cyclic subgroups of $\GL(2,\kk)$ given by Craw~\cite{CrawSpecial11}.

 \subsection*{Main tools}
 The key to the proof of Theorem~\ref{thm:introsurjectioncontraction} is a homological criterion to decide when the morphism $g_C$ from \eqref{eqn:introgC} is surjective. In this situation, any subset $C\subseteq \{0,1,\dots, n\}$ containing $0$ determines a subbundle $E_C$ of $E$, and the module categories of the algebras $A:=\End_Y(E)$ and $A_C:=\End_Y(E_C)$ are linked by a recollement (see Section \ref{Section:Thecorneringcategoryandrecollement}). In particular, there is an exact functor $j^*\colon \Amod \rightarrow \ACmod$ with left adjoint $j_!\colon \ACmod \rightarrow \Amod$. These functors capture information about the morphism $g_C\colon\mathcal{M}(E) \rightarrow \mathcal{M}(E_C)$ from \eqref{eqn:introgC}: closed points $y \in \mathcal{M}(E)$ and $x \in \mathcal{M}(E_C)$ correspond to $0$-generated modules $M_y \in \Amod$ and $N_x \in \ACmod$ of dimension vectors $\vv:=(v_i)_{0\leq i\leq n}$ and $\vv_C:=(v_i)_{i\in C}$ respectively, and 
\[
g_C(y)=x \iff j^*M_y=N_x.
\]
Since the functor $j_!$ lifts $0$-generated $A_C$-modules to $0$-generated $A$-modules, the question of whether $x$ lies in the image of $g_C$ reduces to the following (see Proposition \ref{prop:submoduleSurjection}):

\begin{proposition}
 The morphism $g_C\colon \mathcal{M}(E)\to \mathcal{M}(E_C)$ is surjective iff for each $x\in \mathcal{M}(E_C)$, the $A$-module $j_!(N_x)$ admits a surjective map onto an $A$-module of dimension vector $\vv$.
\end{proposition}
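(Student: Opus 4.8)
The plan is to reduce the surjectivity of $g_C$ to a pointwise membership criterion and then match that criterion to the stated surjection of modules. Since $g_C$ is surjective iff every closed point $x\in\mathcal{M}(E_C)$ lies in its image, and since the dictionary $g_C(y)=x \iff j^*M_y\cong N_x$ identifies such points with the $0$-generated $A$-modules $M$ of dimension vector $\vv$ satisfying $j^*M\cong N_x$, it suffices to prove, for each fixed $x$, that such an $M$ exists iff $j_!(N_x)$ surjects onto some $A$-module of dimension vector $\vv$. I would establish the two implications separately.

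For the forward implication, suppose $x=g_C(y)$, so that $M:=M_y$ is $0$-generated of dimension vector $\vv$ and $j^*M\cong N_x$. I would use the counit $\varepsilon_M\colon j_!j^*M\to M$ of the adjunction $(j_!,j^*)$; precomposing with the isomorphism $j_!(N_x)\cong j_!j^*M$ produces a map $j_!(N_x)\to M$. The key point is that $\varepsilon_M$ is surjective whenever $M$ is $0$-generated: writing $e_C=\sum_{i\in C}e_i$ and identifying $A_C\cong e_CAe_C$, the image of $\varepsilon_M$ is $Ae_CM$, and since $0\in C$ we have $Ae_0M\subseteq Ae_CM\subseteq M$; as $M$ is $0$-generated the outer terms coincide, forcing $Ae_CM=M$. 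Hence $j_!(N_x)$ surjects onto $M$, a module of dimension vector $\vv$.

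For the converse, suppose $\pi\colon j_!(N_x)\twoheadrightarrow M$ is a surjection onto an $A$-module $M$ of dimension vector $\vv$; I claim $M$ defines a point $y\in\mathcal{M}(E)$ with $g_C(y)=x$. First, $M$ is $0$-generated: the module $j_!(N_x)$ is $0$-generated because $j_!$ lifts $0$-generated modules to $0$-generated modules, and any quotient of a $0$-generated module is again $0$-generated. Second, applying the exact functor $j^*$ to $\pi$ and invoking the recollement identity $j^*j_!\cong\id$ yields a surjection $N_x\cong j^*j_!(N_x)\twoheadrightarrow j^*M$; since $M$ has dimension vector $\vv$, the module $j^*M$ has dimension vector $\vv_C$, which is exactly that of $N_x$, so this surjection of modules with equal dimension vectors is an isomorphism. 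Thus $M$ is a $0$-generated $A$-module of dimension vector $\vv$ with $j^*M\cong N_x$, hence corresponds to a point $y\in\mathcal{M}(E)$ with $g_C(y)=x$. Ranging over all $x$ completes the equivalence.

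The main obstacle, I expect, is the converse direction, and specifically the verification that the target $M$ of an \emph{arbitrary} surjection out of $j_!(N_x)$ is automatically a point of $\mathcal{M}(E)$ lying over $x$. This hinges on two structural facts I would isolate at the outset: that $j_!$ preserves $0$-generation, so that this property descends to quotients, and that the recollement supplies $j^*j_!\cong\id$, which together with the dimension-vector count rigidifies the induced surjection $N_x\twoheadrightarrow j^*M$ into the required isomorphism. The surjectivity of the counit on $0$-generated modules used in the forward direction is a short idempotent computation, but it is the structural reason the criterion is phrased in terms of a surjection rather than merely a nonzero homomorphism.
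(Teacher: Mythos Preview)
Your proof is correct and follows essentially the same route as the paper's: both use the counit of $(j_!,j^*)$ for the forward implication and the identity $j^*j_!\cong\id$ together with a dimension-vector count for the converse. The only difference is that you establish surjectivity of the counit via the direct idempotent computation $\operatorname{im}\varepsilon_M=Ae_CM\supseteq Ae_0M=M$, whereas the paper argues that a nonzero cokernel would yield a proper submodule (the image of the counit) contradicting $\theta$-stability for $\theta\in\Theta_{\vv}^+$; your version is slightly more elementary.
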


A second key ingredient is that a derived equivalence
\[
\Psi(-):=E^\vee\otimes_A - \colon D^b(A)\longrightarrow D^b(Y)
\]
induces an isomorphism between the lattice of dimension vectors for $A$ and the numerical Grothendieck group for compact support  $\Knumc(Y)$, introduced by Bayer--Craw--Zhang~\cite{BayerCrawZhang16} (see Appendix~\ref{sec:appendix}). In particular, understanding the class of the object $\Psi(j_!(N_x))$ in $\Knumc(Y)$ reveals the dimension vector of the $A$-module $j_!(N_x)$ for each closed point $x\in \mathcal{M}(E_C)$, and this provides a tool to help determine whether the above the homological criterion applies. More explicitly, we prove the following result (see Theorem~\ref{thm:recollement2}):

\begin{theorem}
\label{thm:introrecollement2}
Suppose that for each $x\in \mathcal{M}(E_C)$, the class $[\Psi(j_!(N_x))]\in \Knumc(Y)$ can be written as a positive combination of the classes of sheaves on $Y$. Then $g_C$ is surjective. 
\end{theorem}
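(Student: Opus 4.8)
The plan is to combine the reduction in Proposition~\ref{prop:submoduleSurjection} with the dictionary between dimension vectors and classes in $\Knumc(Y)$ supplied by the equivalence $\Psi$. By Proposition~\ref{prop:submoduleSurjection}, it suffices to prove that for each closed point $x\in \cM(E_C)$ the $A$-module $j_!(N_x)$ admits a surjection onto some $A$-module of dimension vector $\vv$. First I would record the two classes in play: under the lattice isomorphism induced by $\Psi$, the target dimension vector $\vv$ corresponds to a single class $\beta\in\Knumc(Y)$, namely the class $[\cO_y]$ of the skyscraper sheaf at any point $y\in Y$, since $\cO_y=\Psi(M_y)$ for the $0$-generated $A$-module $M_y$ of dimension vector $\vv$ attached to $y$; meanwhile the dimension vector of $j_!(N_x)$ is encoded in $\alpha_x:=[\Psi(j_!(N_x))]$. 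Because $j_!$ preserves $0$-generation, any nonzero quotient of $j_!(N_x)$ is again $0$-generated with one-dimensional degree-zero part, so a quotient of dimension vector $\vv$ is exactly a point of $\cM(E)$ lying over $x$, and its class under $\Psi$ is forced to be $\beta$.

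The heart of the matter is to promote the numerical hypothesis on $\alpha_x$ to an honest module surjection. The subtlety is that $\Psi$ is not t-exact: the module $j_!(N_x)$ corresponds under $\Psi$ to an object of a tilted heart of $D^b(Y)$, so a priori $\Psi(j_!(N_x))$ is only a perverse object rather than a genuine sheaf. My plan is to use the positivity assumption---that $\alpha_x$ is a nonnegative combination of classes of coherent sheaves---to show that $\Psi(j_!(N_x))$ is in fact represented by a coherent sheaf $\mathcal{F}$ concentrated in degree zero. Granting this, $\mathcal{F}$ is nonzero, so I would choose a closed point $y\in \supp(\mathcal{F})$ and apply Nakayama's lemma to obtain a surjection $\mathcal{F}\twoheadrightarrow \cO_y$ in $\coh(Y)$. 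Transporting this morphism back through $\Psi^{-1}$ yields a map $j_!(N_x)\to M_y$ of $A$-modules, and Proposition~\ref{prop:submoduleSurjection} then delivers surjectivity of $g_C$ once this map is shown to be an epimorphism.

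I expect the main obstacle to be precisely the interplay of the two t-structures at both ends of this argument. On the one hand, deducing that the perverse object $\Psi(j_!(N_x))$ is an honest sheaf from the mere positivity of its class requires controlling the torsion pair defining the tilted heart and ruling out cancellation among cohomology sheaves placed in different degrees; this is where the hypothesis that $\alpha_x$ is a positive combination of sheaf classes must enter in an essential way. On the other hand, $\mathcal{F}\twoheadrightarrow \cO_y$ is a priori only an epimorphism in $\coh(Y)$, and to conclude that the induced map $j_!(N_x)\to M_y$ is an epimorphism of $A$-modules I must check that its cone is concentrated in the single degree $-1$, equivalently that the kernel sheaf $\ker(\mathcal{F}\to\cO_y)$ again lies in the tilted heart. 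I would address this either by choosing $y$ so that this kernel remains an object of the module heart, or by arguing directly at the level of dimension vectors that the effectivity of $\alpha_x-\beta$ forces the componentwise inequality $\underline{\dim}\,j_!(N_x)\geq \vv$ and hence the existence of the required quotient.
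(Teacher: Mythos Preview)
Your proposal has a genuine gap: the direct approach of showing that $\Psi(j_!(N_x))$ is a coherent sheaf in degree zero cannot be carried out from the hypothesis, and the paper does not attempt this. Positivity of the numerical class $[\Psi(j_!(N_x))]\in\Knumc(Y)$ says nothing about where the cohomology of the complex sits; there is no mechanism to rule out cancellation between cohomology in different degrees, and indeed in the situations covered by the theorem the object $\Psi(j_!(N_x))$ is typically a genuine two-term perverse object rather than a sheaf. You flag this obstacle yourself but offer no way past it, and there is none along these lines. The fallback you suggest at the end, deducing a componentwise inequality $\underline{\dim}\,j_!(N_x)\geq\vv$ from effectivity of $\alpha_x-\beta$, is also unjustified: the hypothesis is that $\alpha_x$ is a positive combination of sheaf classes, not that $\alpha_x-\beta$ is, and even the latter would not give componentwise inequality of dimension vectors without knowing that the $[\Psi(S_i)]$ for $i\notin C$ are themselves effective.

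The paper argues instead by contradiction, and the structure is quite different. Assuming $g_C$ is not surjective, Proposition~\ref{prop:submoduleSurjection} together with Lemma~\ref{lem:nonzerohoms} gives a point $x$ with $\Hom_A(j_!(N_x),M)=\Hom_A(M,j_!(N_x))=0$ for \emph{every} $0$-generated $M$ of dimension vector $\vv$; note the vanishing in both directions, which your outline does not use. Transporting via $\Psi$ and invoking Serre duality on the threefold $Y$ gives $\Hom^0_{D^b(Y)}(\mathcal{E}_x,\cO_y)=\Hom^3_{D^b(Y)}(\mathcal{E}_x,\cO_y)=0$ for all $y\in Y$, so Bridgeland--Maciocia~\cite[Proposition~5.4]{BridgelandMaciocia02} forces $\mathcal{E}_x$ to be quasi-isomorphic to a two-term complex $L^{-2}\to L^{-1}$; compact support then kills $H^{-1}$ and yields $\mathcal{E}_x\cong F[1]$ for a sheaf $F$. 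Thus $[\mathcal{E}_x]=-[F]$, and one obtains the contradiction with the positivity hypothesis by pushing forward to a smooth projective completion and evaluating Euler characteristics against a sufficiently ample line bundle. The essential ingredients you are missing are the two-sided Hom vanishing from Lemma~\ref{lem:nonzerohoms}, the use of the Calabi--Yau threefold Serre duality, and the Bridgeland--Maciocia homological-dimension bound.
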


\subsection*{Examples from NCCRs in dimension three}
The morphisms $Y\rightarrow \mathbb{A}^2/G$ from Theorem~\ref{thm:introsurjectioncontraction} all have fibres of dimension at most one, but our methods apply without this assumption. To illustrate this, we also study crepant resolutions of Gorenstein affine threefolds. For any such singularity $X$, Van den Bergh's construction~\cite{VdB04} of an NCCR (satisfying Assumption~\ref{ass:KrullSchmidt}) produces a crepant resolution of $X$ as a fine moduli space of stable representations for an algebra $A$ (see Proposition~\ref{prop:connected}). The choice of $0$-generated stability condition chooses a particular crepant resolution $Y$ and a globally generated bundle $T$ on $Y$. In this case, $Y$ is isomorphic to the multigraded linear series $\mathcal{M}(T)$; since $T$ is a tilting bundle, this moduli construction is certainly geometrically significant. 

Nevertheless, motivated by work of Takahashi~\cite{Takahashi11}, we ask whether one can reconstruct $Y$ using only a proper summand of $T$ (in general, none of the indecomposable summands of $T$ is ample).  To state the result, we say that a vertex $i\in Q_0=\{0,1,\dots, n\}$ is \emph{essential} if there is a $0$-generated $A$-module of dimension vector $\vv$ that contains the vertex simple $A$-module $S_i$ in its socle. The following result combines Propositions~\ref{prop:SekiyaTakahashi} and~\ref{prop:inessentials}:
 
 \begin{proposition}
 \label{prop:introTakahashi}
 Let $\mathcal{M}(T)$ be the crepant resolution of the Gorenstein, affine toric threefold $X$ picked out by the choice of $0$-generated stability condition as above. Then:
 \begin{enumerate}
 \item[\one] for any subset $C\subseteq Q_0$ containing $0$, the image of $g_C\colon \mathcal{M}(T)\to\mathcal{M}(T_C)$ is an irreducible component of $\mathcal{M}(T_C)$; and
 \item[\two] if $C$ is the union of $\{0\}$ with the set of essential vertices, then $g_C$ is an isomorphism onto its image.
 \end{enumerate}
\end{proposition}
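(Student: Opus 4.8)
Both $\mathcal{M}(T)$ and $\mathcal{M}(T_C)$ are projective over the affine base $X=\Spec\Gamma(\mathcal{O}_Y)$, so the universal morphism $g_C\colon\mathcal{M}(T)\to\mathcal{M}(T_C)$ of \eqref{eqn:introgC} is projective and in particular proper, whence $g_C(Y)$ is closed; it is irreducible because $\mathcal{M}(T)=Y$ is. To identify it as a component I would work over the locus $U\subseteq X$ over which the structure morphism $\pi\colon Y\to X$ is an isomorphism. Over $U$ the relevant $A$- and $A_C$-modules are the unique $0$-generated modules supported on a free orbit, so the structure morphism $\pi_C\colon\mathcal{M}(T_C)\to X$ is also an isomorphism there, and the identity $\pi=\pi_C\circ g_C$ forces $g_C$ to restrict to an isomorphism $\pi^{-1}(U)\xrightarrow{\sim}\pi_C^{-1}(U)$. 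Now $\pi_C^{-1}(U)$ is an irreducible, open, $3$-dimensional subscheme of $\mathcal{M}(T_C)$, so its closure $W$ is an irreducible component; since $\pi^{-1}(U)$ is dense in $Y$ and $g_C$ is closed, $g_C(Y)$ is a closed irreducible set of dimension $\leq 3$ containing the dense subset $\pi_C^{-1}(U)$ of $W$, hence $g_C(Y)=W$.

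\textbf{Part \two, injectivity on points.} Now take $C=\{0\}\cup\{\text{essential vertices}\}$ and fix $y\in\mathcal{M}(T)$ with associated $0$-generated $A$-module $M:=M_y$ of dimension vector $\vv$ and $N:=j^*M$. The counit $j_!j^*M\to M$ of the recollement has image the $A$-submodule generated by $e_CM\supseteq e_0M$; as $M$ is $0$-generated and $0\in C$ this is surjective, so $M\cong j_!(N)/K$ with $K:=\ker(j_!j^*M\to M)$. Exactness of $j^*$ together with $j^*j_!\cong\id$ gives $j^*K=0$, i.e. $K$ is supported on $Q_0\setminus C$. By the definition of an essential vertex no $S_i$ with $i\notin C$ lies in $\operatorname{soc}(M)$, so $M$ has no nonzero submodule supported on $Q_0\setminus C$; consequently $K$ must be the largest submodule $K_0$ of $j_!(N)$ supported on $Q_0\setminus C$, and $M=j_!(N)/K_0$ is recovered canonically from $N=j^*M$. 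This shows $g_C$ is injective on closed points, and combined with properness that $g_C\colon Y\to W$ is finite and birational.

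\textbf{Part \two, upgrading to an isomorphism.} Finiteness, birationality and pointwise injectivity alone do not yield an isomorphism (the normalisation of a cuspidal curve is a finite birational bijection), so the crux is to promote the reconstruction $N\mapsto j_!(N)/K_0$ to an inverse morphism $W\to Y$, equivalently to show that $g_C$ is unramified. The plan is to carry out the reconstruction in families: applying $j_!$ to the tautological $A_C$-module on $\mathcal{M}(T_C)$ and quotienting by the relative largest subsheaf supported on $Q_0\setminus C$ should produce, over $W$, a flat family of $0$-generated $A$-modules of dimension vector $\vv$, and hence by the universal property of $\mathcal{M}(T)=Y$ a morphism $W\to Y$ inverting $g_C$ both on points and on scheme structure.

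\textbf{Main obstacle.} The delicate point is exactly this flatness, equivalently the unramifiedness of $g_C$: at $y$ the differential is the map $\Ext^1_A(M,M)\to\Ext^1_{A_C}(N,N)$ induced by $j^*$, and I must exclude self-extensions of $M$ that split after restriction to $C$. I expect such classes to correspond, via the long exact sequences of the recollement, to extensions supported on $Q_0\setminus C$, which essentiality rules out; making this precise — presumably by exploiting the torus action and Reid's recipe to locate where $\Ext^1$ is concentrated, together with the identification of dimension vectors with classes in $\Knumc(Y)$ from Theorem~\ref{thm:introrecollement2} — is where the real work lies.
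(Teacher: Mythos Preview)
For Part \two\ you have taken a far more laborious route than necessary and, by your own admission, left the decisive step incomplete. The paper's argument is essentially a one-liner once you use the GIT characterisation of essential vertices (Remark~\ref{rem:essentialWalls}): a vertex $i\neq 0$ is essential if and only if the hyperplane $S_i^\perp$ supports a wall of the $0$-generated chamber $\Theta_\vv^+$. Hence the parameter $\theta_C$ with $\theta_i=1$ for $i\in C\setminus\{0\}$ and $\theta_i=0$ for each inessential $i$ lies in the \emph{interior} of $\Theta_\vv^+$, so $L(\theta_C)=\bigotimes_{i\in C}T_i$ is the ample polarisation on $Y=\mathcal{M}(T)$ arising from this linearisation. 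Theorem~\ref{thm:multigradedlinearseries} together with Remark~\ref{rem:ample} then shows immediately that $g_C$ is a closed immersion. No recollement, no family-wise reconstruction, and no $\Ext^1$ analysis is required. Your pointwise injectivity argument is correct, and in fact the unramifiedness you worry about also follows from the same socle condition: the kernel of $\Ext^1_A(M,M)\to\Ext^1_{A_C}(N,N)$ is a quotient of $\Hom_A(K,M)$, which vanishes because $K$ is filtered by simples $S_i$ with $i$ inessential, and by definition no such $S_i$ embeds in any $0$-generated $M$. But carrying this through to a closed immersion onto the reduced component still needs care, and all of it is bypassed by the ampleness observation.

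For Part \one\ your approach is different from the paper's and is essentially sound. The paper works explicitly in toric GIT, identifying the image as the quotient $Y_\theta=\mathbb{V}(I_{\mathscr{E}})\git_\theta G$ via the quiver-of-sections machinery of \cite{CrawSmith08,CrawQuinterovelez12} and comparing lattice ideals; this yields an explicit description of the component. Your topological argument via the big torus is more conceptual, but the key assertion that $\pi_C$ is an isomorphism over $U$ is not justified: it holds because over the torus each rank-one reflexive module $M_i$ trivialises, so $A_C\otimes_R k(x)$ is a matrix algebra with a unique simple module of dimension vector $\vv_C$, but you should say so.
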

 
 \noindent Here, part \two\ generalises the result of Takahashi~\cite{Takahashi11} beyond the case where $X$ is an abelian quotient. Example~\ref{exa:1/3(1,1,1)} shows that Proposition~\ref{prop:introTakahashi} is optimal: in general $\mathcal{M}(T_C)$ is reducible.
 
 We conclude with several examples that are orthogonal in spirit to Proposition~\ref{prop:introTakahashi}\two, with a view to strengthening the statement to an isomorphism $\mathcal{M}(T)\cong \mathcal{M}(T_C)$. Rather than keep the summands of $T$ corresponding to essential vertices, instead we use Reid's recipe as a guide to help us choose which essential vertices to \emph{remove}.  For an essential vertex $i\in Q_0$, derived Reid's recipe~\cite{CautisLogvinenko09,Logvinenko10,CautisCrawLogvinenko12,BCQV} proves that the image under the derived equivalence $\Psi$ of the vertex simple $A$-module $S_i$ is a sheaf. This gives enough information to compute the class of $\Psi(j_!(N_x))$ in $\Knumc(Y)$, and under a dimension condition (see Corollary~\ref{cor:surjective}), we can apply Theorem~\ref{thm:introrecollement2} to deduce that $g_C\colon \mathcal{M}(T)\to\mathcal{M}(T_C)$ is an isomorphism.  We showcase this construction in Examples~\ref{ex:1/6(1,2,3)} and~\ref{ex:DP6}, and in the latter example we also show that the dimension condition can fail if we remove two indecomposable summands of $T$ corresponding to essential vertices that mark the same surface by Reid's recipe. Put more geometrically, surjectivity can fail if the summands of $T_C$ do not generate the Picard group of $\mathcal{M}(T)$. 

\subsection*{Notation} 
Let $\kk$ be an algebraically closed field of characteristic zero. For any quasiprojective $\kk$-scheme $Y$ and $\kk$-algebra $A$, we write $D^b(Y)$ and $D^b(A)$ for the bounded derived categories of coherent sheaves on $Y$ and finitely generated left $A$-modules respectively. A vector bundle is a locally-free sheaf of finite rank.  

\subsection*{Acknowledgements} The first author thanks Stefan Schr\"{o}er for a stimulating discussion. We thank the anonymous referees for pointing out an error in an earlier version of this paper and for helpful comments. The first and third authors were supported by EPSRC grants EP/J019410/1 and EP/M017516/1 respectively, and the second author was supported by JSPS Grant-in-Aid (C) No. 23540045. 
 
\section{Multigraded linear series} \label{Section:MultigradedLinearSeries}
An associative $\kk$-algebra $A$ that is presented in the form $A\cong\kk Q/I$ for some finite connected quiver $Q$ and two-sided ideal $I\subset \kk Q$ determines a choice of idempotents $e_i\in A$, one for each vertex $i\in Q_0$. Let  $\ZZ^{Q_0}$ denote the free abelian group generated by the vertex set of $Q$. A dimension vector $\vv=(v_i)\in \NN^{Q_0}$ determines the rational vector space 
\[
\Theta_{\vv}:= \vv^\perp=\Bigg\{\theta=(\theta_i)\in \Hom(\ZZ^{Q_0},\QQ) \mid \sum_{i\in Q_0} \theta_iv_i= 0\Bigg\}
\]
 of stability parameters for $A$-modules of dimension vector $\vv$. For $\theta\in \Theta_{\vv}$, an $A$-module $M$ of dimension vector $\vv$ is \emph{$\theta$-semistable} if $\theta(N) \geq 0$ for every nonzero proper $A$-submodule $N$ of $M$. The notion of $\theta$-stability is defined by replacing  $\geq$ with $>$, and we say $\theta\in \Theta_{\vv}$ is \emph{generic} if every $\theta$-semistable $A$-module is $\theta$-stable. There is a wall and chamber decomposition on $\Theta_{\vv}$,  where two generic parameters $\theta, \theta^\prime\in \Theta_{\vv}$ lie in the same chamber if and only if the notions of $\theta$-stability and $\theta^\prime$-stability coincide.
 
 When $\vv$ is indivisible and $\theta\in \Theta_{\vv}$ is generic, King~\cite{King94} constructs the fine moduli space $\mathcal{M}(A,\vv,\theta)$ of isomorphism classes of $\theta$-stable $A$-modules of dimension vector $\vv$. The universal family on $\mathcal{M}(A,\vv,\theta)$ is a tautological vector bundle
 \[
 T=\bigoplus_{i\in Q_0} T_i
 \]
 satisfying $\rank(T_i)=v_i$ for $i\in Q_0$, together with a $\kk$-algebra homomorphism $A\to \End(T)$, such that the fibre of $T$ at any closed point of $\mathcal{M}(A,\vv,\theta)$ is the corresponding $\theta$-stable $A$-module of dimension vector $\vv$. In fact, $T$ is defined only up to tensor product by an invertible sheaf, but we remove this ambiguity by choosing once and for all a vertex of the quiver that we denote $0\in Q_0$ and working only with dimension vectors $\vv$ satisfying $v_0=1$; we normalise $T$ by fixing $T_0$ to be the trivial line bundle. Let $L_\theta := \bigotimes_{i\in Q_0} \det(T_i)^{\theta_i}$ denote the ample bundle on $\mathcal{M}(A,\vv,\theta)$ induced by the GIT construction.
\medskip

 These moduli spaces arise naturally in geometry as follows. Let $R$ be a finitely generated $\kk$-algebra, let $Y$ be a projective $R$-scheme and let $E_1,\dots, E_n$ be nontrivial, effective vector bundles on $Y$. In addition, for $E_0:=\cO_Y$, write $A:=\End(\bigoplus_{0\leq i\leq n}E_i)$ for the endomorphism algebra. This decomposition of $E:=\bigoplus_{0\leq i\leq n}E_i$ gives a complete set of orthogonal idempotents $e_i=\id\in \End(E_i)$ of $A$ such that $1=\sum_{0\leq i\leq n} e_i$. Then $A$ admits a presentation
 \begin{equation}
 \label{eqn:quiveralgebra}
 A\cong \kk Q/I
 \end{equation}
 such that the vertex set is $Q_0=\{0,1,\dots,n\}$. Indeed, introduce a set of loops at each vertex corresponding to a finite set of $\kk$-algebra generators of $R$, and 
 for $0\leq i,j\leq n$ we introduce arrows from $i$ to $j$ corresponding to a finite generating set for $\Hom(E_i,E_j)$ as an $R$-module. This determines a surjective $\kk$-algebra homomorphism $\kk Q\to \End(E)$ with kernel $I$. 
 
 \begin{remark}
 \label{rem:idempotents}
 The ideal $I\subset \kk Q$ constructed in this way need not be admissible, and relations may even involve idempotents. For example, if $E_i\cong E_j$ then the isomorphisms correspond to relations of the form $aa^\prime-e_i, a^\prime a-e_j\in I$ for some $a, a^\prime \in \kk Q$.
 \end{remark}

 The dimension vector $\vv=(v_i)$ defined by setting $v_i:=\rank(E_i)$ for $0\leq i\leq n$ is indivisible, and $E$ is a flat family of $A$-modules of dimension vector $\vv$. If there exists generic $\theta\in \Theta_{\vv}$ such that for each closed point $y\in Y$, the fibre $E_y$ of $E$ over $y$ is $\theta$-stable, then the universal property of $\cM(A,\vv,\theta)$ determines a morphism 
\[
f\colon Y\longrightarrow \cM(A,\vv,\theta)
\]
satisfying $E_i=f^*(T_i)$ for all $i\in Q_0$; note that $\cM(A,\vv,\theta)$ depends on $E$ and the GIT chamber containing $\theta$, while $f$ depends on both $E$ and $\theta$. The following result is well known to experts (compare Bergman--Proudfoot~\cite[Theorem~2.4]{BP08}), but we were unable to find the statement at the level of generality we require so for convenience we provide a proof.

\begin{lemma}
\label{lem:universal}
 Given a vector bundle $E$ on $Y$, suppose there exists a generic $\theta=(\theta_i)\in \Theta_{\vv}$ such that $E$ is a flat family of $\theta$-stable $A$-modules of dimension vector $\vv$. 
 \begin{enumerate}
 \item[\one] The pullback via the universal morphism $f\colon Y\to \cM(A,\vv,\theta)$ of the ample bundle $L_\theta$ induced by the GIT construction is the line bundle $\bigotimes_{0\leq i\leq n} \det(E_i)^{\otimes\theta_i}$ on $Y$; and
 \item[\two] if $L_\theta$ is very ample (which holds after replacing $\theta$ by a positive multiple if necessary), then the image of $f\colon Y\to \cM(A,\vv,\theta)$ is isomorphic to the image of the morphism from $Y$ to the linear series of the globally generated line bundle $\bigotimes_{0\leq i\leq n} \det(E_i)^{\otimes\theta_i}$.
 \end{enumerate}
 \end{lemma}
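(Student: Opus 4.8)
The plan is to treat the two parts separately: part \one\ reduces to a formal computation of the pullback, while part \two\ uses very ampleness to realise $f$ through a closed immersion.

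For part \one\ I would simply pull back the defining formula for $L_\theta$. Since the top exterior power and the tensor product both commute with $f^*$, and since the universal morphism satisfies $f^*T_i=E_i$ for all $i\in Q_0$ by construction, I obtain
\[
f^*L_\theta=f^*\Bigl(\bigotimes_{i\in Q_0}\det(T_i)^{\theta_i}\Bigr)=\bigotimes_{i\in Q_0}\det(f^*T_i)^{\theta_i}=\bigotimes_{0\le i\le n}\det(E_i)^{\otimes\theta_i},
\]
which is the asserted line bundle; the term $i=0$ is harmless since $T_0$ and $E_0=\cO_Y$ are both trivial, and negative exponents are read in $\Pic$. I expect no difficulty here.

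For part \two\ the key point is that very ampleness of $L_\theta$ means precisely that the morphism $\iota\colon\cM(A,\vv,\theta)\to\vert L_\theta\vert$ to its complete linear series is a closed immersion. I would then study the composite $\iota\circ f\colon Y\to\vert L_\theta\vert$. By part \one\ this morphism pulls $\cO(1)$ back to $L:=\bigotimes_{0\le i\le n}\det(E_i)^{\otimes\theta_i}$, and $L$ is globally generated because it is the pullback of the globally generated bundle $L_\theta$; thus $\iota\circ f$ is the morphism determined by the basepoint-free linear system $W:=\im\bigl(\Gamma(\cM,L_\theta)\to\Gamma(Y,L)\bigr)$ sitting inside the linear series of $L$. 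Because $\iota$ is a closed immersion, it carries the scheme-theoretic image of $f$ isomorphically onto the image of $\iota\circ f$, so the image of $f$ is isomorphic to the image of the morphism from $Y$ to this linear series of $L$.

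The main obstacle is the bookkeeping distinguishing the system $W$ from the complete system $\vert L\vert$: a priori the pullback $\Gamma(\cM,L_\theta)\to\Gamma(Y,L)$ need not be surjective and $f$ need not be dominant, so the morphism $\varphi_{\vert L\vert}$ could have a strictly larger image, reflecting the Stein factorisation of $f$. What rescues the statement is that $L=f^*L_\theta$ is the pullback of an \emph{ample} bundle: a curve $C\subset Y$ satisfies $L\cdot C=L_\theta\cdot f_*C$ by the projection formula, so $L\cdot C=0$ iff $f_*C=0$ iff $f$ contracts $C$, whence $f$ and $\varphi_{\vert L\vert}$ contract exactly the same subschemes. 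I would therefore phrase the conclusion in terms of the globally generated system $W$ (equivalently, the complete system on the image), and check that the induced map from the image of $\varphi_{\vert L\vert}$ onto the image of $f$ is an isomorphism rather than merely a finite surjection. Very ampleness is what makes $\iota$ a closed immersion, and it is this last identification of the two images, together with the ampleness of $L_\theta$ controlling the contracted loci, that constitutes the only genuinely geometric input beyond the formal pullback computation.
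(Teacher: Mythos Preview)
Your approach is essentially identical to the paper's: part \one\ is the same formal pullback computation, and part \two\ uses the very ample $L_\theta$ to produce a closed immersion $g\colon\cM(A,\vv,\theta)\hookrightarrow\vert L_\theta\vert$ and then identifies $\im(f)$ with $\im(g\circ f)$.

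The one difference is that the paper does not raise your concern about the subsystem $W=\im\bigl(\Gamma(\cM,L_\theta)\to\Gamma(Y,L)\bigr)$ versus the complete system $\vert L\vert$. The paper simply asserts that, since $(g\circ f)^*\cO(1)=L$, the composite $g\circ f$ \emph{coincides with} the classical morphism $\varphi_{\vert L\vert}$, and concludes immediately. In other words, the paper reads ``the morphism to the linear series of $L$'' as the morphism determined by the pullback of $\cO(1)$, without separately checking that this is the complete linear system. Your projection-formula argument about contracted curves, and the residual worry about finite surjection versus isomorphism, are entirely absent from the paper's proof. So you are being more scrupulous than the source; if you are content to adopt the paper's identification, your write-up of part \two\ can end after the sentence ``$\iota$ carries the scheme-theoretic image of $f$ isomorphically onto the image of $\iota\circ f$''.
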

 
\begin{proof}
 Since pullback commutes with tensor operations on $T_i$, the universal property of the morphism $f$ gives $\det(E_i)^{\otimes\theta_i} = \det(f^*(T_i))^{\otimes\theta_i} \cong f^*(\det(T_i)^{\otimes \theta_i})$ for $0\leq i\leq n$, and hence 
 \begin{equation}
 \label{eqn:pullbackuniversal}
 \bigotimes_{0\leq i\leq n} \det(E_i)^{\otimes\theta_i}\cong f^*\left(\bigotimes_{0\leq i\leq n} \det(T_i)^{\otimes\theta_i}\right) = f^*(L_\theta)
 \end{equation}
 which proves \one. For \two, let $g\colon \cM(A,\vv,\theta)\to \vert L_\theta\vert$ denote the closed immersion to the linear series of $L_\theta$. Equation \eqref{eqn:pullbackuniversal} gives $L:=(g\circ f)^*(\cO(1)) = \bigotimes_{0\leq i\leq n} \det(E_i)^{\otimes\theta_i}$. It follows that $g\circ f$ coincides with the classical morphism $\varphi_{\vert L\vert}\colon Y\to \vert L\vert$ to the linear series of $L$. In particular, $\bigotimes_{0\leq i\leq n} \det(E_i)^{\otimes\theta_i}$ is globally generated. Moreover, since $g$ is a closed immersion, the image of $f$ is isomorphic via $g$ to the image of $\varphi_{\vert L\vert}$.
\end{proof}
  
 The problem with Lemma~\ref{lem:universal} is that it is a difficult problem in general to find a suitable parameter $\theta\in \Theta_{\vv}$ for which a given vector bundle $E$ defines a flat family of $\theta$-stable $A$-modules.  
 
 Here we highlight a special situation where this problem has a simple solution. It's easy to see that any stability parameter  $\theta=(\theta_i)\in \Theta_{\vv}$ satisfying $\theta_i>0$ for all $i\neq 0$ is generic, so there is a GIT chamber $\Theta_{\vv}^+\subset \Theta_{\vv}$ containing all such stability parameters. Given an $A$-module $M$ that is $\theta$-stable for $\theta\in \Theta^+_{\vv}$, it follows directly from the definition that there exists a surjective $A$-module homomorphism $Ae_0\to M$. More generally, we say that an $A$-module $M$ is \emph{$0$-generated} if there exists a surjective $A$-module homomorphism $Ae_0\to M$. It is sometimes advantageous to use this latter notion because it is well-defined without having to make explicit reference to a dimension vector $\vv$.
 
\begin{proposition}
\label{prop:0-generated}
 Let $E_1, \dots, E_n$ be vector bundles on $Y$ and set $E_0=\cO_Y$. Then $\bigoplus_{0\leq i\leq n} E_i$ is a flat family of $0$-generated $A$-modules if and only if $E_i$ is globally generated for all $1\leq i\leq n$.  
 \end{proposition}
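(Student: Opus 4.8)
The plan is to translate the fibrewise $0$-generation condition into a statement about evaluation of global sections. First I would record that, since $E=\bigoplus_{0\le i\le n}E_i$ is a vector bundle, it is automatically flat over $Y$; moreover the $\cO_Y$-linear action of $A=\End_Y(E)$ on $E$ descends to each fibre $E_y:=E\otimes_{\cO_Y}\kk(y)$, making $E_y$ an $A$-module of dimension vector $\vv=(\rk E_i)$. Thus the only content of the statement is the assertion that every fibre $E_y$ is $0$-generated. By definition this means $E_y$ admits a surjection from the projective module $Ae_0$, equivalently that $E_y$ is generated as an $A$-module by the line $(E_0)_y=\kk(y)$ sitting at vertex $0$.

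The key computation is to identify the $A$-submodule of $E_y$ generated by $(E_0)_y$. Using the orthogonal idempotents $e_i$, I would decompose $Ae_0=\bigoplus_{0\le j\le n}e_jAe_0$ and note that $e_jAe_0=\Hom_Y(E_0,E_j)=\Gamma(Y,E_j)=H^0(Y,E_j)$, the isomorphism sending a homomorphism $\phi$ to the section $\phi(1)$. Because $A$ is the \emph{full} endomorphism algebra, $e_jAe_0$ already contains every homomorphism $E_0\to E_j$, including those factoring through intermediate summands $E_k$, so no multi-step paths in the quiver are overlooked. Tracing the action through to fibres, an element $\phi\in e_jAe_0$ sends the generator of $(E_0)_y$ to $\phi(1)(y)\in(E_j)_y$; hence the vertex-$j$ component of the submodule generated by $(E_0)_y$ is precisely the image of the evaluation map $\mathrm{ev}_y\colon H^0(Y,E_j)\to (E_j)_y$. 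Verifying this identification carefully is the main technical point of the argument.

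It then follows that $E_y$ is $0$-generated if and only if $\mathrm{ev}_y$ is surjective for every $1\le j\le n$, the case $j=0$ being trivial. Running over all closed points, I conclude that $E$ is a flat family of $0$-generated $A$-modules if and only if $\mathrm{ev}_y\colon H^0(Y,E_j)\to (E_j)_y$ is surjective for every $1\le j\le n$ and every closed point $y\in Y$. By Nakayama's lemma, surjectivity of these fibrewise evaluation maps at all closed points is exactly the condition that the vector bundle $E_j$ is globally generated, which closes the equivalence. The one place demanding care is the fibrewise identification in the previous paragraph; once that is in hand the rest is formal.
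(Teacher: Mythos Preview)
Your proof is correct and follows essentially the same approach as the paper: both arguments identify, for each closed point $y$, the vertex-$j$ piece of the $A$-submodule of $E_y$ generated by $(E_0)_y$ with the image of the evaluation map $H^0(Y,E_j)\to (E_j)_y$, and then read off the equivalence with global generation. The only cosmetic difference is that the paper phrases the fibrewise condition via $\theta$-stability for $\theta\in\Theta_{\vv}^+$ and destabilising submodules, whereas you work directly with the surjection-from-$Ae_0$ definition of $0$-generated; your route is slightly more direct, and your remark that $e_jAe_0=\Hom_Y(E_0,E_j)$ already captures all paths through intermediate vertices makes the key identification particularly clean.
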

\begin{proof}
The bundle $E:=\bigoplus_{0\leq i\leq n} E_i$ on $Y$ is a flat family of $A$-modules of dimension vector $\vv=(\rank(E_i))_{0\leq i\leq n}$, and for any closed point $y\in Y$, the $A$-module structure in the fibre $E_y$ of $E$ over $y$ is obtained by evaluating all homomorphisms between the bundles $E_i$ (for $0\leq i\leq n$) at $y$. Choose $\theta\in \Theta_{\vv}^+$ satisfying $\theta_i>0$ for $i\neq 0$. Since $\theta_0=-\sum_{1\leq i\leq n} v_i\theta_i$, an $A$-submodule $W$ of $E_y$ is destabilising if and only if $\dim(W_0)=1$ and there exists $i>0$ such that the sum of all maps from $W_0$ to $W_i$ determined by paths in the quiver from $0$ to $i$ is not surjective. In particular, $E_y$ is $\theta$-unstable for some $y\in Y$ if and only if there exists $i>0$ such that $E_i$ cannot be written as the quotient of $\mathcal{O}_Y^{\oplus k}$ for some $k\in \NN$; equivalently, $E_y$ is $\theta$-stable for all $y\in Y$ if and only if $E_i$ is globally generated for $1\leq i\leq n$. 
\end{proof}

 \begin{corollary}
 \label{cor:ggtautbundles}
 For any $\theta\in \Theta_{\vv}^+$, the locally-free sheaf $T_i$ on $\cM(A,\vv,\theta)$ is globally generated for all $0\leq i\leq n$.
 \end{corollary}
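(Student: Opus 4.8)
The plan is to exploit the $\kk$-algebra homomorphism $A\to \End(T)$ carried by the universal family, together with the fact, noted just before Proposition~\ref{prop:0-generated}, that every $\theta$-stable module for $\theta\in \Theta_{\vv}^+$ is $0$-generated. For $i=0$ there is nothing to prove, since $T_0\cong \cO_{\cM(A,\vv,\theta)}$ is the trivial line bundle; so fix $1\leq i\leq n$.

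First I would produce a supply of global sections of $T_i$. Each element $p\in e_iAe_0$ --- think of a path in $Q$ from $0$ to $i$ --- is sent under $A\to \End(T)$ to an endomorphism of $T$ whose component from $T_0$ to $T_i$ is a morphism of sheaves $s_p\colon T_0\to T_i$. Since $T_0\cong \cO_{\cM(A,\vv,\theta)}$, this is precisely a global section $s_p\in H^0(T_i)$. I will show that the family $\{s_p : p\in e_iAe_0\}$ generates $T_i$.

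Next I would verify the fibrewise generation criterion. At a closed point $y\in \cM(A,\vv,\theta)$ the fibre $T_y$ is the corresponding $\theta$-stable $A$-module $M$, whose $A$-module structure is obtained by evaluating the homomorphism $A\to \End(T)$ at $y$; in particular $(T_i)_y=e_iM=M_i$, and the value $s_p(y)\in M_i$ equals $p\cdot m_0$, where $m_0$ spans the one-dimensional space $M_0=(T_0)_y$. Because $\theta\in \Theta_{\vv}^+$, the module $M$ is $\theta$-stable and hence $0$-generated, so the surjection $Ae_0\twoheadrightarrow M$ restricts at vertex $i$ to a surjection $e_iAe_0\twoheadrightarrow M_i$, $p\mapsto p\cdot m_0$. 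Thus the sections $\{s_p\}$ span the fibre $(T_i)_y$.

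Since this holds at every closed point, Nakayama's lemma shows that the sections $\{s_p : p\in e_iAe_0\}$ generate $T_i$, so $T_i$ is globally generated. The one point requiring care --- and essentially the only content beyond the argument of Proposition~\ref{prop:0-generated}, which this mirrors --- is the compatibility used in the previous paragraph: that evaluating the sheaf-level morphism $s_p$ at $y$ reproduces the fibrewise $A$-action of $p$ on the stable module $M$. This is exactly the defining property of the tautological family, so I expect it to be a matter of unwinding the construction of $\cM(A,\vv,\theta)$ rather than a genuine difficulty.
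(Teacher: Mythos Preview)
Your argument is correct and follows essentially the same approach as the paper: the paper simply observes that $T=\bigoplus_i T_i$ is the tautological flat family of $\theta$-stable (hence $0$-generated) $A$-modules with $T_0\cong\cO$, and then invokes Proposition~\ref{prop:0-generated} to conclude that each $T_i$ is globally generated. You have unrolled the relevant direction of that proposition's proof explicitly rather than citing it, which has the minor advantage that it works directly with the given $\kk$-algebra homomorphism $A\to\End(T)$ rather than with $\End(T)$ itself.
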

 \begin{proof}
 The sheaf $\bigoplus_{0\leq i\leq n} T_i$ is a flat family of $\theta$-stable $A$-modules of dimension vector $\vv$ on $\cM(A,\vv,\theta)$. The result follows from Proposition~\ref{prop:0-generated} and the fact that $T_0\cong\cO_{\cM(A,\vv,\theta)}$.
 \end{proof}

 \begin{definition}
 \label{def:multigradedlinearseries}
 Let $E_1,\dots, E_n$ be globally generated vector bundles on $Y$. For $E_0:=\cO_Y$, write $E=\bigoplus_{0\leq i\leq n} E_i$, and define  $A:=\End_Y(E)$ and  $\vv=(\rank(E_i))_{0\leq i\leq n}$. For $\theta' \in \Theta_{\vv}^+$ satisfying $\theta^\prime_i = 1$ for all $i>0$, define $j>0$ to be the smallest integer such that the ample line bundle $L_{\theta'}^{\otimes j}$ on $\cM(A, \vv, \theta')$ is very ample.  The \emph{multigraded linear series} of $E$ is the fine moduli space
 \[
 \cM(E):= \cM(A, \vv, \theta)
 \]
 of $\theta$-stable $A$-modules of dimension vector $\vv$, where $\theta:=j\theta'$.  
 \end{definition}
 
 Corollary~\ref{cor:ggtautbundles} implies that each direct summand of the tautological bundle $T=\bigoplus_{0\leq i\leq n} T_i$ on $\cM(E)$ is globally generated. Moreover, $L_\theta = \bigotimes_{1\leq i\leq n} \det(T_i)^{\otimes j}$ is very ample. To justify the terminology `multigraded linear series', we present the following result (compare Example~\ref{exa:linearseries}).
  
\begin{theorem}
\label{thm:multigradedlinearseries}
 Let $E_1,\dots, E_n$ be globally generated vector bundles on $Y$. There is a morphism $f\colon Y\to \cM(E)$ satisfying $E_i=f^*(T_i)$ for $1\leq i\leq n$ whose image is isomorphic to the image of the morphism $\varphi_{\vert L\vert}\colon Y\to \vert L\vert$ to the linear series of $L:=\bigotimes_{1\leq i\leq n} \det(E_i)^{\otimes j}$ for some $j>0$.
\end{theorem}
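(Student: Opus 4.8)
The plan is to verify that the standing hypothesis of Lemma~\ref{lem:universal} holds for the particular stability parameter $\theta=j\theta'$ fixed in Definition~\ref{def:multigradedlinearseries}, and then to read off the conclusion directly from parts \one\ and \two\ of that lemma. First I would observe that, since $E_1,\dots,E_n$ are globally generated by hypothesis, Proposition~\ref{prop:0-generated} shows that $E=\bigoplus_{0\leq i\leq n}E_i$ is a flat family of $0$-generated $A$-modules of dimension vector $\vv$. The parameter $\theta'\in\Theta_{\vv}^+$ satisfies $\theta'_i=1>0$ for every $i\neq 0$, so the argument in the proof of Proposition~\ref{prop:0-generated} shows that each fibre $E_y$ is in fact $\theta'$-stable. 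Because $\theta$-stability is unchanged under scaling by a positive integer, $E_y$ is also $\theta$-stable for $\theta=j\theta'$, and $\theta$ remains generic as a positive multiple of a generic parameter in $\Theta_{\vv}^+$. Hence $E$ is a flat family of $\theta$-stable $A$-modules of dimension vector $\vv$, which is exactly what Lemma~\ref{lem:universal} requires.

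With this in hand, Lemma~\ref{lem:universal} supplies a morphism $f\colon Y\to\cM(A,\vv,\theta)=\cM(E)$ satisfying $E_i=f^*(T_i)$ for all $i\in Q_0$, which in particular gives the first assertion of the theorem. To obtain the statement about images, I would invoke the very ampleness built into Definition~\ref{def:multigradedlinearseries}: the integer $j$ is chosen precisely so that $L_\theta=L_{\theta'}^{\otimes j}$ is very ample, so the hypothesis of Lemma~\ref{lem:universal}\two\ is met. That part of the lemma then identifies the image of $f$ with the image of the morphism $\varphi_{\vert L'\vert}$ to the linear series of the line bundle $L':=\bigotimes_{0\leq i\leq n}\det(E_i)^{\otimes\theta_i}$.

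The final step is to simplify $L'$. Since $E_0=\cO_Y$ is the trivial line bundle, the $i=0$ factor $\det(E_0)^{\otimes\theta_0}$ is trivial, and since $\theta_i=j$ for every $i>0$, the surviving factors give
\[
L'=\bigotimes_{1\leq i\leq n}\det(E_i)^{\otimes j}=L,
\]
so the image of $f$ is isomorphic to the image of $\varphi_{\vert L\vert}$, as claimed.

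I do not anticipate a genuine obstacle here: the theorem is essentially a repackaging of Lemma~\ref{lem:universal} together with the specific choices recorded in Definition~\ref{def:multigradedlinearseries}. The only points requiring care are the scale-invariance of stability, so that passing from $\theta'$ to $\theta=j\theta'$ disturbs neither the flatness nor the stability of the family $E$, and the bookkeeping in the determinant product, where the trivial summand $E_0$ must be discarded and the exponent $\theta_i=j$ correctly substituted.
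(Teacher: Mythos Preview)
Your proposal is correct and follows essentially the same approach as the paper: the paper's proof is a terse one-liner that says to apply Proposition~\ref{prop:0-generated} and Lemma~\ref{lem:universal}\two\ to the parameter $\theta=j\theta'\in\Theta_{\vv}^+$ from Definition~\ref{def:multigradedlinearseries}, noting that $E_0=\cO_Y$. You have simply unpacked each of these ingredients in turn, including the scale-invariance of stability and the bookkeeping for the determinant product, which is exactly what the paper leaves implicit.
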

\begin{proof}
Apply Proposition~\ref{prop:0-generated} and Lemma~\ref{lem:universal}\two\ to the parameter $\theta = j\theta^\prime\in \Theta_{\vv}^+$ from Definition~\ref{def:multigradedlinearseries}, noting that $E_0=\cO_Y$. 
\end{proof}

\begin{example}[Linear series of higher rank]
\label{exa:linearseries}
When $Y$ is projective and $E_1$ has rank $r$, then $\cM(A,\vv,\theta)$ is isomorphic to the Grassmannian $\Gr(H^0(E_1),r)$ of rank $r$ quotients of $H^0(E_1)$. The ample bundle $\cO(1)$ on $\Gr(H^0(E_1),r)$ is very ample, so we may take $j=1$ in Definition~\ref{def:multigradedlinearseries} and Theorem~\ref{thm:multigradedlinearseries}. Therefore $f$ coincides with the morphism $\varphi_{\vert E_1\vert}$ to the linear series of higher rank that recovers $E_1$ as the pullback of the tautological quotient bundle of rank $r$; see Mukai~\cite[Section~3]{Mukai2010}. When $r=1$, this is the classical linear series of a basepoint-free line bundle.
\end{example}

\begin{remark}
\label{rem:ample}
If the globally generated line bundle $L:=\bigotimes_{1\leq i\leq n} \det(E_i)^{\otimes j}$ from Theorem~\ref{thm:multigradedlinearseries} is ample but not very ample, choose $k>0$ such that $L^{\otimes k}$ is very ample. After replacing $\theta$ by $k\theta$, the proof of Theorem~\ref{thm:multigradedlinearseries} shows that the resulting universal morphism 
\[
f\colon Y\to \cM(A,\vv,k\theta)
\]
 is a closed immersion. Since $\cM(A,\vv,k\theta)\cong \cM(E)$, we will hereafter simply write $f\colon Y\to \cM(E)$ for the closed immersion given by the composition whenever $L$ is ample but not very ample. 
 \end{remark}

\begin{example}[Quiver flag varieties]
 Let $Q$ be a finite, acyclic, connected quiver with a unique source  denoted $0\in Q_0$, and let $\vv=(v_i)\in \NN^{Q_0}$ be a dimension vector satisfying $v_0=1$. For $\theta\in \Theta_{\vv}^+$, the fine moduli space $\mathcal{M}(\kk Q, \vv, \theta)$ is called a \emph{quiver flag variety} \cite{Craw11}, and for $i\in Q_0$, the tautological bundle $T_i$ of rank $v_i$ is globally generated by Corollary~\ref{cor:ggtautbundles}.
 Every such variety is an iterative Grassmann-bundle \cite[Theorem~3.3]{Craw11}, and $T_i$ is simply the pullback of the tautological quotient bundle on one of the Grassmann bundles in the tower. 
 
 We claim that $\mathcal{M}(\kk Q, \vv, \theta)$ is the multigraded linear series $\mathcal{M}(T)$ associated to $T=\bigoplus_{i\in Q_0}T_i$. Since $v_i=\rk(T_i)$ for $i\in Q_0$, it suffices to show that the tautological $\kk$-algebra homomorphism 
 \[
 h\colon \kk Q\to \End(T)
 \]
 is an isomorphism. The proof of \cite[Lemma~5.3]{Craw11} establishes that $h$ is surjective, so suppose $\sum_{1\leq \alpha\leq k} c_\alpha p_\alpha\in \Ker(h)$, where paths $p_1, \dots, p_k$ in $Q$ have common tail at $i\in Q_0$ and common head at $j\in Q_0$. For any path $p$ in $Q$ from vertex $0$ to $i$, the sum $\sum_{1\leq \alpha\leq k} c_\alpha p_\alpha p$ lies in the kernel of the map $e_0(\kk Q)e_i\to \Hom(T_0,T_i)\cong \Gamma(T_i)$ of $\kk$-vector spaces induced by $h$. However, this map is an isomorphism \cite[Corollary~3.5\two]{Craw11}, so $h$ is injective as required.  
  
 We may now apply Theorem~\ref{thm:multigradedlinearseries} to the determinants of the tautological bundles. Indeed, $\det(T_i)$ is globally generated for $i\in Q_0$ and $L=\bigotimes_{i\in Q_0} \det(T_i)$ is ample by \cite[Lemma~3.7]{Craw11},  so for $E=\bigoplus_{i\in Q_0} \det(T_i)$, the morphism $f\colon \mathcal{M}(T)\to \mathcal{M}(E)$ is a closed immersion.
 \end{example}
 
\begin{remark}
Multigraded linear series were introduced by Craw--Smith~\cite{CrawSmith08} when each $E_i$ has rank one and $Y$ is a projective toric variety.
\begin{enumerate}
\item The construction of \cite{CrawSmith08} is phrased in terms of a quiver with relations that gives a presentation $A\cong \kk Q/I$, leading to an explicit GIT description of the image of $f$ in a quiver flag variety. However, we now assume only that $Y$ is projective over an affine, and in this generality no such natural presentation exists, leading us to place greater emphasis on $A$ rather than on a quiver. While we sacrifice an explicit description of the image of $f$, Theorem~\ref{thm:multigradedlinearseries} nevertheless determines the image of $f$ up to isomorphism.
 \item The observation in Theorem~\ref{thm:multigradedlinearseries} that the image of $f$ is determined by $\bigotimes_{1\leq i\leq n} \det(E_i)^{\otimes j}$ for some $j> 0$ (see also Remark~\ref{rem:ample}) renders redundant the assumption from \cite[Corollary~4.10]{CrawSmith08} (and hence from \cite[Proposition~5.5]{Craw11} and Prabhu-Naik~\cite[Proposition~5.5]{PrabhuNaik15}) that a map obtained by multiplication of global sections is surjective. 
\end{enumerate}
\end{remark}

\section{The cornering category and recollement} \label{Section:Thecorneringcategoryandrecollement}
 In this section we use Theorem~\ref{thm:multigradedlinearseries} repeatedly to produce a compatible family of morphisms between different multigraded linear series.  We then introduce a homological criterion that is sufficient to guarantee that any of these morphisms is surjective.
 
 We continue to assume that $R$ is a finitely generated $\kk$-algebra, $Y$ is a projective $R$-scheme and that $E_1,\dots, E_n$ are nontrivial  globally generated vector bundles on $Y$. For $E_0=\cO_Y$,write $E:=\bigoplus_{0\leq i\leq n} E_i$ and define $A:= \End(E)$. To produce new endomorphism algebras from $A$, let $C\subseteq \{0,1,\dots,n\}$ be any subset containing $\{0\}$. Define both the idempotent  $e_C:= \sum_{i\in C} e_i$ of $A$ and the $\kk$-algebra $A_C:= e_CAe_C$. Since $A\cong\End(E)$, the locally-free sheaf $E_C:= \bigoplus_{i\in C} E_i$ on $Y$ satisfies
  \[
  A_C\cong  \End(E_C).
  \]
   The process of passing from $A$ to $A_C$ is called \emph{cornering} the algebra $A$. 
\begin{remark} \label{rem:cornering}
This use of the term \emph{cornering} comes from representation theory, where the basic example is cornering the matrix algebra $\End_{\kk}(\kk \oplus \kk) \cong\textnormal{Mat}_{2 \times 2}(\kk)$ by a nontrivial idempotent to produce a subalgebra of matrices that have nonzero entries only in one particular ``corner". 

This is distinct from the construction of Ishii--Ueda~\cite{IshiiUeda09} for dimer models, which is a process linking the removal of a  corner in a lattice polygon to the universal localisation of certain arrows in a quiver in order to determine an open subset in an associated moduli space.
\end{remark}

For $\vv_C:=(\rk(E_i))_{i\in C}$ and for any $0$-generated stability parameter $\theta_C\in \Theta_{\vv_{C}}^+$, Theorem~\ref{thm:multigradedlinearseries} gives a morphism 
 \begin{equation}
 \label{eqn:cornermorphism}
 f_C\colon Y\longrightarrow \cM(E_C)=\cM(A_C,\vv_C,\theta_C).
 \end{equation}
 
 Before studying these morphisms in detail, we record the fact that if we're interested only in the scheme underlying a multigraded linear series, we may assume without loss of generality that the globally generated vector bundles $E_1,\dots, E_n$  are pairwise non-isomorphic:  
 
 \begin{lemma}
 \label{lem:Morita}
 For $a_1,\dots, a_n\geq 1$, we have $\cM(\bigoplus_{0\leq i\leq n}E_i)\cong \cM(\bigoplus_{0\leq i\leq n}E_i^{\oplus a_i})$.
 \end{lemma}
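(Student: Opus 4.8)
The plan is to exhibit the isomorphism as a Morita equivalence between the endomorphism algebras, then upgrade it from a bijection of closed points to an isomorphism of fine moduli spaces using families. Write $E^\prime := \bigoplus_{0\le i\le n} E_i^{\oplus a_i}$, where I take $a_0=1$ so that $E_0^{\oplus a_0}=\cO_Y$ retains the role of the distinguished trivial summand, and set $A^\prime:=\End(E^\prime)$ with idempotents $e_i^\prime:=\id_{E_i^{\oplus a_i}}$ and dimension vector $\vv^\prime:=(a_iv_i)_{0\le i\le n}$, where $v_i=\rk(E_i)$. Since $E$ and $E^\prime$ generate the same additive subcategory, $A$ and $A^\prime$ are Morita equivalent; I would make this concrete by taking the idempotent $f\in A^\prime$ projecting $E^\prime$ onto one copy of each summand $E_i$, so that $fA^\prime f=\End(E)=A$ and, because the $E_i$ generate $E^\prime$, the idempotent is full, $A^\prime fA^\prime=A^\prime$. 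The inverse equivalences are $G:=A^\prime f\otimes_A(-)\colon \Amod\to A^\prime{\operatorname{-mod}}$ and $F:=fA^\prime\otimes_{A^\prime}(-)$.

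Next I would track how $G$ transforms the relevant data. A direct computation gives $e_i^\prime A^\prime f\cong (e_iA)^{\oplus a_i}$ as right $A$-modules, whence $\dim_i G(M)=a_i\dim_i M$ for every $A$-module $M$; in particular $G$ carries dimension vector $\vv$ to $\vv^\prime$ and preserves $v_0=1$. Since $fe_0=e_0^\prime$ one has $G(Ae_0)=A^\prime e_0^\prime$, so $G$ sends the projective $Ae_0$ to $A^\prime e_0^\prime$; as an equivalence preserves surjections, it matches $0$-generated $A$-modules with $0$-generated $A^\prime$-modules in both directions. For stability I would use that, as noted before Proposition~\ref{prop:0-generated}, $\Theta_{\vv}^+$ is a single GIT chamber, so $\cM(E)=\cM(A,\vv,\theta)$ is independent of the choice of $\theta\in\Theta_\vv^+$, and likewise for $E^\prime$. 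Fixing $\theta\in\Theta_\vv^+$ and putting $\theta_i^\prime:=\theta_i/a_i\in\Theta_{\vv^\prime}^+$, the identity $\dim_iG(N)=a_i\dim_iN$ yields $\theta(N)=\theta^\prime(G(N))$ for every submodule $N$, so $G$ identifies $\theta$-stable modules of dimension vector $\vv$ with $\theta^\prime$-stable modules of dimension vector $\vv^\prime$. This produces a bijection between the closed points of $\cM(E)$ and those of $\cM(E^\prime)$.

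To promote this to an isomorphism of schemes I would argue at the level of moduli functors. The bimodule $A^\prime f$ is finitely generated and projective as a right $A$-module (being $\bigoplus_i(e_iA)^{\oplus a_i}$), so $G=A^\prime f\otimes_A(-)$ is exact, preserves $\cO_B$-flatness and commutes with base change. Applied to a flat family over a base $B$ of $0$-generated $A$-modules of dimension vector $\vv$ it therefore yields a flat family of $0$-generated $A^\prime$-modules of dimension vector $\vv^\prime$, with the correct behaviour on fibres by the computations above, and $F$ provides an inverse. This is a natural isomorphism between the functors represented by $\cM(E)$ and $\cM(E^\prime)$, giving the asserted isomorphism of fine moduli spaces.

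The algebraic content—the Morita equivalence, the linear transformation of dimension vectors, and the preservation of $0$-generatedness—is routine, so I expect the main obstacle to lie in the two geometric points needed to pass from closed points to schemes: confirming that the stability parameters used in Definition~\ref{def:multigradedlinearseries} on each side really lie in the single chamber $\Theta^+$ (so that both $\cM(E)$ and $\cM(E^\prime)$ are genuinely moduli of $0$-generated modules, independent of the very-ample rescaling), and verifying that $G$ behaves well in families so that it descends to a morphism of functors. Both rest on the projectivity of $A^\prime f$ over $A$, which supplies the requisite exactness, flatness, and base-change compatibility.
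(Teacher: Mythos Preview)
Your proof is correct, but the route differs from the paper's. The paper argues more geometrically: writing $E=\bigoplus_i E_i^{\oplus a_i}$ and $E'=\bigoplus_i E_i$, it observes that increasing multiplicities in the tautological bundle on $\cM(E')$ produces a flat family of $0$-generated $\End(E)$-modules of dimension vector $(a_i\rk E_i)$, giving a universal morphism $\cM(E')\to\cM(E)$; in the other direction it views $E'$ as $E_C$ for a cornering subset $C$ (one copy of each summand) and invokes the morphism $f_C\colon\cM(E)\to\cM(E_C)$ from \eqref{eqn:cornermorphism}, then appeals to universality to see the two maps are mutually inverse. Your argument instead realises the Morita equivalence explicitly via the full idempotent $f\in A'$, checks by hand that the bimodule functor $G=A'f\otimes_A(-)$ rescales dimension vectors by $(a_i)$, sends $Ae_0$ to $A'e_0'$, and matches $\theta$-stability with $\theta'$-stability, and then uses projectivity of $A'f$ over $A$ to promote this to an isomorphism of moduli functors. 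The paper's approach is shorter and stays within the universal-morphism machinery already set up in Section~\ref{Section:MultigradedLinearSeries}; yours is more self-contained (it does not rely on the cornering construction introduced just before the lemma) and makes the role of Morita equivalence explicit, at the cost of verifying the family-level compatibilities by hand. Both are sound; the paper's version exploits the ambient framework, while yours would transplant more readily to settings where the cornering formalism is unavailable.
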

\begin{proof}
The endomorphism algebra of $E^\prime:=\bigoplus_{0\leq i\leq n}E_i$ is Morita equivalent to the endomorphism algebra of $E:= \bigoplus_{0\leq i\leq n}E_i^{\oplus a_i}$. By increasing the multiplicities of the summands in the tautological bundle on $\cM(E^\prime)$, we obtain a flat family $\bigoplus_{0\leq i\leq n} T_i^{\oplus a_i}$ of $0$-generated $\End(E)$-modules of dimension vector $\vv = (a_i\rk(E_i))$, so there is a morphism $f\colon \cM(E^\prime)\to \cM(E)$. For the other direction, we have $E^\prime = E_C$ for some cornering subset $C$, so \eqref{eqn:cornermorphism} defines a morphism $f_C\colon \cM(E)\to \cM(E^\prime)$. Universality ensures that these morphisms are mutually inverse.
\end{proof}

 Our next result encodes the fact that the morphisms \eqref{eqn:cornermorphism} are compatible as we vary the choice of the subset $C$. To state the result, regard the poset of subsets of $\{0,1,\dots,n\}$ that contain $\{0\}$ as a category $\mathscr{C}$ in which the morphisms are the set-theoretic inclusion maps between subsets. 
 
 \begin{proposition}
 \label{prop:functor}
  Let $E_1,\dots, E_n$ be globally generated vector bundles on $Y$ and set $R:= \Gamma(\cO_Y)$. There is a contravariant functor from $\mathscr{C}$ to the category of $R$-schemes that sends a set $C$ to the multigraded linear series $\cM(E_C)$.
 \end{proposition}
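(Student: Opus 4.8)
The plan is to build the functor by hand. On objects I send $C\mapsto\mathcal M(E_C)$, and on a morphism of $\mathscr C$ — an inclusion $D\subseteq C$ of subsets containing $0$ — I produce a morphism $g_{C,D}\colon\mathcal M(E_C)\to\mathcal M(E_D)$ by \emph{cornering the tautological family} on $\mathcal M(E_C)$ and then invoking the universal property of $\mathcal M(E_D)$. The assignment reverses inclusions, so the functor will automatically be contravariant.

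First I would construct the morphisms. On $\mathcal M(E_C)=\mathcal M(A_C,\vv_C,\theta_C)$ let $T=\bigoplus_{i\in C}T_i$ be the tautological bundle, with tautological homomorphism $\tau\colon A_C\to\End(T)$. Given $D\subseteq C$, the element $\tau(e_D)$ is the projection onto $T_D:=\bigoplus_{i\in D}T_i$, so $T_D$ is a flat family of $A_D=e_DA_Ce_D$-modules of dimension vector $\vv_D$. The essential point is that every fibre is $\theta_D$-stable. For a closed point $y$, the fibre $T|_y=M_y$ is a $\theta_C$-stable, hence $0$-generated, $A_C$-module, so there is a surjection $A_Ce_0\twoheadrightarrow M_y$; applying the exact functor $e_D(-)$ and using $e_DA_Ce_0=A_De_0$ (valid because $0\in D$) shows that the cornered fibre $e_DM_y$ is a $0$-generated $A_D$-module. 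Since $\vv_D$ has $v_0=1$, any proper submodule $W$ of a $0$-generated module has $W_0=0$, whence $\theta_D(W)>0$ for $\theta_D\in\Theta_{\vv_D}^+$; this is precisely the mechanism in the proof of Proposition~\ref{prop:0-generated}, and it shows $e_DM_y$ is $\theta_D$-stable. The universal property of the fine moduli space $\mathcal M(E_D)$ then yields a unique morphism $g_{C,D}\colon\mathcal M(E_C)\to\mathcal M(E_D)$ classifying $T_D$, characterised by $g_{C,D}^*(T_i^{D})\cong T_i$ for $i\in D$. These are morphisms of $R$-schemes, as each $\mathcal M(E_D)$ is projective over the affine $R$-scheme $\mathcal M(A_D,\vv_D,0)$ and the classifying construction is $R$-linear.

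Next I would verify the functor axioms, which are now formal. Taking $D=C$ gives $T_C=T$, the universal family itself, so uniqueness in the universal property forces $g_{C,C}=\id$. For $C''\subseteq C'\subseteq C$, I would note that $g_{C,C'}$ pulls the universal family on $\mathcal M(E_{C'})$ back to $e_{C'}T$, while $g_{C',C''}$ pulls the universal family on $\mathcal M(E_{C''})$ back to $e_{C''}T^{C'}$; composing and using $e_{C''}e_{C'}=e_{C''}$ shows that $g_{C',C''}\circ g_{C,C'}$ classifies $e_{C''}T=T_{C''}$ on $\mathcal M(E_C)$, which is exactly the family classified by $g_{C,C''}$. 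Uniqueness then gives $g_{C',C''}\circ g_{C,C'}=g_{C,C''}$.

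The main obstacle is the fibrewise stability statement in the second paragraph: one must confirm both that cornering by $e_D$ carries $0$-generated $A_C$-modules to $0$-generated $A_D$-modules — where the identity $e_DA_Ce_0=A_De_0$ together with the exactness of $e_D(-)$ does the work — and that $0$-generation combined with $v_0=1$ already forces $\theta_D$-stability. Everything else, namely flatness, the correctness of the dimension vector $\vv_D$, and the two functor axioms, follows from the universal property and routine bookkeeping with the idempotents $e_D$, so I would keep those verifications brief.
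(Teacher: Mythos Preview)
Your approach is essentially the paper's: both corner the tautological family on $\mathcal{M}(E_C)$ to obtain a flat family of $0$-generated $A_D$-modules and then invoke the universal property of $\mathcal{M}(E_D)$, with the functor axioms following from uniqueness. The one place the paper is cleaner is the $R$-scheme structure: rather than appeal to a vague ``$R$-linearity of the classifying construction'', the paper observes that $\{0\}\in\mathscr{C}$ with $A_{\{0\}}=\End(\mathcal{O}_Y)=R$ and hence $\mathcal{M}(E_{\{0\}})=\Spec R$, so the structure morphisms $\mathcal{M}(E_C)\to\Spec R$ are precisely the values $g_{C,\{0\}}$ of your functor, and their compatibility with each $g_{C,D}$ is then an instance of the composition law $g_{D,\{0\}}\circ g_{C,D}=g_{C,\{0\}}$ you already proved.
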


\begin{proof}
  Given subsets $C^\prime \subseteq C\subseteq \{0,1,\dots, n\}$ that both contain $\{0\}$, we first construct a morphism $f_{C,C^\prime}\colon \cM(E_C)\to \cM(E_{C^\prime})$ that fits into a commutative diagram 
\begin{equation}
\label{eqn:diagfunctor}
\xymatrix{
 & Y\ar[dl]_{f_C}\ar[dr]^{f_{C^\prime}} & \\
 \cM(E_C)\ar[rr]^{f_{C,C^\prime}} & & \cM(E_{C^\prime}).
 }
\end{equation}
 To avoid a proliferation of indices, we write $T=\bigoplus_{i\in C}T_i$ and $T^\prime=\bigoplus_{i\in C^\prime}T^\prime_i$ for the tautological bundles on $\cM(E_C)$ and $\cM(E_{C^\prime})$ respectively, where $\rk(T^\prime_i) = \rk(T_i)=\rk(E_i)$ for all $i\in C^\prime$. Since $T$ is a flat family of $0$-generated $A_C$-modules on $\cM(E_C)$, it follows that $\bigoplus_{i\in C^\prime}T_i$ is a flat family of $0$-generated $A_{C^\prime}$-modules on $\cM(E_C)$. The universal property of $\cM(E_{C^\prime})$ as in Lemma~\ref{lem:universal} defines a morphism $f_{C,C^\prime}\colon \cM(E_C)\to \cM(E_{C^\prime})$ satisfying $f_{C,C^\prime}^*(T^\prime_i) = T_i$ for $i\in C^\prime$. The morphism $f_C$ is also universal, so 
 \[
 (f_{C,C^\prime}\circ f_C)^*(T_i^\prime) = f_C^*(T_i) = E_i
 \]
 for all $i\in C^\prime$. This property characterises $f_{C^\prime}$, so diagram \eqref{eqn:diagfunctor} commutes as required. That the assignment sending the inclusion $C^\prime \hookrightarrow C$ to the morphism $f_{C,C^\prime}$ respects composition follows similarly from the universal property of $f_{C,C^\prime}$. Our assignment is therefore a functor.
 
 It remains to prove that $f_{C,C^\prime}$ is a morphism of $R$-schemes. For $C^{\prime\prime}:=\{0\}$, we have $A_{C^{\prime\prime}}=\End(E_0) = R$ and hence $\cM(E_{C^{\prime\prime}})=\Spec R$. The result follows by commutativity of the morphisms $f_{C^{\prime},C^{\prime\prime}}\circ f_{C,C^\prime} = f_{C,C^{\prime\prime}}$ established above.
 \end{proof}
 
 Of particular interest to us are the following morphisms. 
 
 \begin{definition}
 \label{def:cornering}
  Let $C\subseteq \{0,1,\dots, n\}$ be a subset containing $0$. The \emph{cornering morphism} for $C$ is the universal morphism 
 \begin{equation} 
 \label{eqn:gC}
 g_C:= f_{\{0,\dots,n\},C}\colon \cM(E)\longrightarrow \cM(E_C) 
 \end{equation}
 obtained by applying the functor from Proposition~\ref{prop:functor} to the inclusion $C\hookrightarrow \{0,1,\dots,n\}$.
 \end{definition}

Theorem~\ref{thm:multigradedlinearseries} ensures that we understand the image of each cornering morphism. The next example illustrates that while these morphisms may be surjective, they need not be.

 \begin{example}
 For $\mathbb{P}^2$ equipped with the tilting bundle $E:=\mathcal{O} \oplus \mathcal{O}(1) \oplus \mathcal{O}(2)$, the algebra $\End_{\mathbb{P}^2}(E)$ can be presented using the Beilinson quiver with relations: 
  \begin{align*}
  \begin{aligned}
 \begin{tikzpicture}
\node (v0) at (0,0) {$0$};
\node (v0') at (0,0.3) {$\phantom{0}$};
\node (v0'') at (0,-0.3) {$\phantom{0}$};
\node (v1) at (3,0)  {$1$};
\node (v1') at (3,0.3) {$\phantom{1}$};
\node (v1'') at (3,-0.3) {$\phantom{1}$};
\node (v2) at (6,0)  {$2$};
\node (v2') at (6,0.3) {$\phantom{2}$};
\node (v2'') at (6,-0.3) {$\phantom{2}$};
\draw [->] (v0') to node[gap] {$\scriptstyle{x_0}$} (v1');
\draw [->] (v0) to node[gap] {$\scriptstyle{y_0}$} (v1);
\draw [->] (v0'') to node[gap] {$\scriptstyle{z_0}$} (v1'');
\draw [->] (v1') to node[gap] {$\scriptstyle{x_1}$} (v2');
\draw [->] (v1) to node[gap] {$\scriptstyle{y_1}$} (v2);
\draw [->] (v1'') to node[gap] {$\scriptstyle{z_1}$} (v2'');
\end{tikzpicture}
 \end{aligned}
 \qquad
 \begin{aligned}
x_1y_0&=y_1x_0 \\
y_1z_0&=z_1y_0 \\
z_1x_0&=x_1z_0
 \end{aligned}
 \end{align*}
 Consider the category $\mathscr{C}$ from Proposition~\ref{prop:functor} viewed as a poset: 
 \[
 \begin{tikzpicture}
\node (v0) at (0,0) {$ C_1:=\{0,1\} $};
\node (v1) at (2.5,1)  {$C_3=\{ 0, 1,2\}$};
\node (v2) at (2.5,-1)  {$C_0:= \{0\}$};
\node (v2) at (5,0)  {$C_2:= \{0, 2 \}$};

\node (v2) at (2.5,0)  {\rotatebox{90}{$\subset$}};
\node (v2) at (1.2,0.5)  {\rotatebox{45}{$\subset$}};
\node (v2) at (1.2,-0.5)  {\rotatebox{135}{$\subset$}};
\node (v2) at (3.7,0.5)  {\rotatebox{135}{$\subset$}};
\node (v2) at (3.7,-0.5)  {\rotatebox{45}{$\subset$}};
\end{tikzpicture}
 \]
 It is easy to calculate that $\mathcal{M}(E) \cong \mathbb{P}^2$. Example~\ref{exa:linearseries} implies that $g_{C_1}\colon\mathcal{M}(E) \rightarrow \mathcal{M}(E_{C_1})$ is an isomorphism, whereas $g_{C_2}\colon \mathcal{M}(E) \rightarrow \mathcal{M}(E_{C_2}) \cong \mathbb{P}^5$ is the Veronese embedding  $\varphi_{\vert\mathcal{O}(2)\vert}$. For $1\leq i\leq 3$, we have that $\mathcal{M}(E_{C_i}) \rightarrow \mathcal{M}(E_{C_0}) \cong \Spec \kk$ is the structure morphism.
 \end{example}
 
 In order to introduce the surjectivity criterion we continue to assume that $C\subseteq \{0,1,\dots, n\}$ contains $0$ and where the idempotent $e_C=\sum_{i\in C} e_i$ determines the algebra $A_C = e_C A e_C$. In this situation there are six functors forming a recollement of the abelian module category:
\[
\begin{tikzpicture}
\node (v0) at (0,0) {$\scriptsize{A/Ae_CA-\module}$};
\node (v1) at (4.5,0)  {$\scriptsize{A-\module}$};
\node (v2) at (9,0)  {$\scriptsize{A_C-\module}$};
\draw [->,bend right=15] (v1) to node[gap] {$\scriptstyle{i^*}$} (v0);
\draw [->,bend right=0] (v0) to node[gap] {$\scriptstyle{i_*}$} (v1);
\draw [->,bend left=15] (v1) to node[gap] {$\scriptstyle{i^{!}}$} (v0);

\draw [->,bend right=15] (v2) to node[gap] {$\scriptstyle{j_*}$} (v1);
\draw [->,bend right=0] (v1) to node[gap] {$\scriptstyle{j^*}$} (v2);
\draw [->,bend left=15] (v2) to node[gap] {$\scriptstyle{j_{!}}$} (v1);
\end{tikzpicture}
\]
where the functors are defined by 
\begin{align*}
\begin{aligned}
i^*(-)&:= A/(Ae_CA) \otimes_A (-) \\
i_*(-) &:=\inc \\
i^{!}(-) &:= \Hom_{A}(A/Ae_CA,-)
\end{aligned}\quad
\text{ and  }
\begin{aligned}\quad
j_*(-)&:=\Hom_{A_C}(eA,-) \\
j^*(-) &:=e_CA \otimes_A (-) \cong \Hom_{A}(Ae_C,-) \\
j_{!}(-) &:= Ae_C \otimes_{e_CAe_C} (-)
\end{aligned}
\end{align*}
such that $(i^*,i_*, i^{!})$ and $(j_{!},j^*,j_*)$ are adjoint triples, $i_*$, $j_*$, and $j_!$ are fully faithful, and $\im i_* = \ker j^*$. In particular, $j^*$ and $i_*$ are exact. The module $j_!(N)$ is maximally extended by objects supported on $A/AeA$. Indeed, for any $A/AeA$-module $M$, we have for all $k\in \ZZ$ that 
\begin{equation}
\label{eqn:Ext}
\Ext^k(j_!(N),i_*(M))=\Ext^k(N,j^*i_*(M))=0
\end{equation}

 Recall that in order to define the multigraded linear series $\cM(E_C)$ from \eqref{eqn:cornermorphism}, we introduced the dimension vector $\vv_C = (\rk(E_i))_{i\in C}$. 

\begin{lemma}
\label{lem:recollementProperties} 
 Let $N$ be an $A_C$-module of dimension vector $\vv_C$.
\begin{enumerate}
\item[\one] If $N$ is a $0$-generated $A_C$-module, then $j_!N$ is a $0$-generated $A$-module.
\item[\two] The $A$-module $j_! N$ is finite-dimensional and satisfies $\dim_i  j_!N = \dim_i N$ for $i \in C$. 
\end{enumerate}
\end{lemma}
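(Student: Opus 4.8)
The plan is to exploit the concrete description of the functor $j_!(-) = Ae_C \otimes_{e_CAe_C} (-)$ together with the fact that $j^*$ is the left adjoint's companion detecting $0$-generation. I would prove \two\ first, since the dimension count there feeds naturally into part \one.

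For \two, the key observation is that $j^*\circ j_! \cong \id$, which follows from $j_!$ being fully faithful (equivalently, from $j^* j_! (N) = e_C A \otimes_A (Ae_C \otimes_{A_C} N) \cong e_C A e_C \otimes_{A_C} N = A_C \otimes_{A_C} N \cong N$). Since $j^*(M) = e_C M$ picks out the graded pieces of $M$ at the vertices $i \in C$, this isomorphism immediately gives $\dim_i j_!N = \dim_i j^* j_! N = \dim_i N$ for all $i \in C$. What remains is to show $j_!N$ is finite-dimensional, i.e.\ that the graded pieces at vertices $i \notin C$ are also finite-dimensional. First I would use that $Ae_C$ is a finitely generated $A$-module (it is a summand of $A$) and that $N$, having dimension vector $\vv_C$, is finite-dimensional over $\kk$; hence $j_!N$ is a finitely generated $A$-module. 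The remaining point is that $j_!N$ has \emph{no} composition factors among the simples $S_i$ with $i \notin C$ beyond what finiteness forces — here I would invoke the recollement structure, noting that $j_!N$ is built from $N$ by extensions, and that the relations in $A \cong \kk Q/I$ coming from the endomorphism algebra $\End(E)$ of a vector bundle over a \emph{projective} $R$-scheme constrain the paths emanating from $C$ into the complementary vertices to span only finite-dimensional spaces.

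For \one, suppose $N$ is $0$-generated, so there is a surjection $A_C e_0 \twoheadrightarrow N$ of $A_C$-modules. I would apply the right-exact functor $j_!$ to obtain $j_!(A_C e_0) \to j_!(N)$, which is surjective; the task then reduces to identifying $j_!(A_C e_0)$ and checking it is $0$-generated as an $A$-module. Since $A_C e_0 = e_C A e_C e_0 = e_C A e_0$ (using $e_C e_0 = e_0$ as $0 \in C$), we compute $j_!(A_C e_0) = Ae_C \otimes_{A_C} e_C A e_0 \cong Ae_C e_0 = Ae_0$, the projective cover at vertex $0$ in $\Amod$. As $Ae_0$ is tautologically $0$-generated, the composite $Ae_0 \cong j_!(A_Ce_0) \twoheadrightarrow j_!(N)$ exhibits $j_!N$ as $0$-generated, as required.

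The main obstacle I anticipate is the finite-dimensionality assertion in \two, specifically controlling the graded pieces of $j_!N$ at the vertices outside $C$. The abstract recollement formalism does not by itself bound these, so the argument must genuinely use the geometry — that $A = \End_Y(E)$ for $E$ a bundle on a scheme projective over the affine $\Spec R$, so that each $\Hom(E_i, E_j)$ is a finitely generated $R$-module and $N$ is annihilated by a power of the augmentation-type structure making the relevant tensor product finite. I would isolate this as the technical heart of the proof, while the adjunction identities $j^* j_! \cong \id$ and the projective identification $j_!(A_C e_0) \cong A e_0$ are formal consequences of the recollement that require only careful bookkeeping with the idempotents.
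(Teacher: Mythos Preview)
Your argument for \one\ and for the dimension equality $\dim_i j_!N = \dim_i N$ at $i\in C$ via $j^*j_!\cong\id$ is correct and matches the paper's proof essentially verbatim.

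The gap is in the finite-dimensionality part of \two. Observing that $Ae_C$ is a finitely generated \emph{left} $A$-module only tells you $j_!N$ is a finitely generated $A$-module, which is far from finite-dimensional over $\kk$. Your subsequent remarks about the recollement building $j_!N$ by extensions, or about ``augmentation-type'' annihilation of $N$, do not lead anywhere: the recollement formalism places no bound on how many copies of $S_i$ (for $i\notin C$) can appear, and there is no relevant augmentation ideal in play. You correctly isolate the geometric input, namely that each $\Hom(E_i,E_j)$ is a finitely generated $R$-module, but you do not use it. The clean step you are missing is on the \emph{right}: since $R=\Gamma(\mathcal{O}_Y)$ sits centrally in $A$ and $A$ is module-finite over $R$, the summand $Ae_C$ is finitely generated as a right $R$-module; but $R$ maps into $A_C$ (indeed $0\in C$, so $e_0Ae_0\cong R$ lies in $e_CAe_C$), hence $Ae_C$ is finitely generated as a right $A_C$-module, say by $a_1,\dots,a_r$. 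Then $j_!N = Ae_C\otimes_{A_C} N$ is a quotient of $N^{\oplus r}$ as a $\kk$-vector space, and finite-dimensionality is immediate. This is exactly the paper's argument, and it replaces the vague paragraph in your proposal with a two-line computation.
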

\begin{proof}
 To simplify notation in this proof, write $e:=e_C$. For $i\in C$, let $e_i'$ denote the idempotent $e_i$ considered as an element of $A_C=eAe$. Since $N$ is $0$-generated there is a surjective map $A_Ce'_0 \rightarrow N$, and since $j_!$ is right exact there is a surjection $j_!(A_Ce'_0) \rightarrow j_!N$. We then calculate
\[
j_!(A_Ce'_0) := Ae \otimes_{eAe} A_C e^\prime_0 \cong Ae \otimes_{eAe} eAe \; e_0 \cong Ae e_0=Ae_0
\] 
as $0 \in C$, so there is a surjective map $Ae_0 \rightarrow j_!N$ which proves \one. For part \two, the algebra $A$ is a module-finite $e_0Ae_0$-algebra, so there are finitely many elements in $A$ that generate $A$ as an $e_0Ae_0$-algebra. In particular, there are finitely many elements $a_1, \dots, a_r$ of $Ae \subset A$ that generate $Ae$ as a right $eAe$-module. As a vector space, we have $Ae \otimes_{eAe} N \subset N^{\oplus r}$, so $N$ and hence also $j_!N=A e \otimes_{eAe}N$ is a finite dimensional vector space. Then for $i \in C$, we have $e_iAe \subset eAe$ and so $e_i j_! N = e_i Ae \otimes_{eAe} N = e'_iN$, giving $\dim_i j_! N = \dim_i N$ as required.
\end{proof}

Note that each closed point $x\in \mathcal{M}(E_C)$ determines a $0$-generated $A_C$-module $N_x$ of dimension vector $\vv_C$. 

\begin{proposition} 
\label{prop:submoduleSurjection} 
 A closed point $x \in \mathcal{M}(E_C)$ lies in the image of the cornering morphism $g_C\colon\mathcal{M}(E) \rightarrow \mathcal{M}(E_C)$ if and only if the $A$-module $j_!(N_x)$ admits a surjective map onto an $A$-module of dimension vector $\vv$. 
\end{proposition}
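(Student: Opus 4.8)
The plan is to show that the fibre of $g_C$ over $x$ is nonempty precisely when $j_!(N_x)$ surjects onto an $A$-module of the correct dimension vector, by exploiting the adjunction $(j_!, j^*)$ together with the identification $g_C(y)=x \iff j^*M_y = N_x$ recorded in the introduction. First I would fix the moduli-theoretic interpretation: a point $y\in\cM(E)$ corresponds to a $0$-generated $A$-module $M_y$ of dimension vector $\vv$, and by Proposition~\ref{prop:functor} the cornering morphism acts by restricting the module structure, so that the underlying $A_C$-module of $M_y$ is exactly $j^*M_y$ (the functor $j^*(-)=e_CA\otimes_A(-)$ is literally restriction of scalars along $A_C=e_CAe_C$, picking out the summands indexed by $C$). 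Thus $y$ lies in the fibre over $x$ if and only if $j^*M_y\cong N_x$ as $0$-generated $A_C$-modules.

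For the forward direction, suppose $x=g_C(y)$ for some $y$, so $j^*M_y\cong N_x$. The counit of the adjunction $(j_!,j^*)$ gives a natural $A$-module map $j_!j^*M_y \to M_y$; precomposing with $j_!$ applied to the isomorphism $N_x\cong j^*M_y$ yields a map $j_!(N_x)\to M_y$. The key point is that this map is surjective: because $j^*$ is exact and the counit $j_!j^* \to \id$ is an isomorphism after applying $j^*$, the cokernel $Q$ satisfies $j^*Q=0$, i.e.\ $Q$ is supported on $A/Ae_CA$; since $M_y$ is $0$-generated with $0\in C$, any such quotient must vanish, so the map is onto. As $M_y$ has dimension vector $\vv$, this exhibits the required surjection from $j_!(N_x)$.

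For the converse, suppose $j_!(N_x)$ admits a surjection $\pi\colon j_!(N_x)\twoheadrightarrow M$ onto some $A$-module $M$ of dimension vector $\vv$. By Lemma~\ref{lem:recollementProperties}\one, $j_!(N_x)$ is $0$-generated, hence so is its quotient $M$. Applying the exact functor $j^*$ to $\pi$ gives a surjection $j^*j_!(N_x)\twoheadrightarrow j^*M$; since $j_!$ is fully faithful the unit $\id\to j^*j_!$ is an isomorphism, so $j^*j_!(N_x)\cong N_x$, and comparing dimension vectors via Lemma~\ref{lem:recollementProperties}\two\ (which gives $\dim_i j_!(N_x)=\dim_i N_x$ for $i\in C$) forces $j^*M\cong N_x$ with no loss of dimension. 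A $0$-generated $A$-module of dimension vector $\vv$ is automatically $\theta$-stable for $\theta\in\Theta_{\vv}^+$, so $M$ defines a closed point $y\in\cM(E)$ with $j^*M_y\cong N_x$, whence $g_C(y)=x$.

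The main obstacle I expect is the converse direction, specifically verifying that the quotient $M$ of $j_!(N_x)$ genuinely restricts to $N_x$ under $j^*$ rather than to a proper quotient of it. This is where the dimension-vector bookkeeping of Lemma~\ref{lem:recollementProperties}\two\ is essential: the surjection $j^*j_!(N_x)\cong N_x \twoheadrightarrow j^*M$ between modules of the \emph{same} dimension vector $\vv_C$ must be an isomorphism. One must also confirm that $M$, being a $0$-generated module of dimension vector $\vv$, is a stable point of the moduli functor and not merely a semistable or strictly destabilised object; this follows from the genericity of $\theta\in\Theta_{\vv}^+$ observed before Proposition~\ref{prop:0-generated}, under which $0$-generation is equivalent to $\theta$-stability.
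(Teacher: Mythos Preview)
Your proof is correct and follows essentially the same approach as the paper's: both directions hinge on the counit $j_!j^*M_y\to M_y$ (forward) and the unit isomorphism $N_x\cong j^*j_!(N_x)$ together with a dimension count (converse). Your forward argument that the cokernel vanishes because it is killed by $e_C$ yet receives a surjection from $Ae_0$ is a clean rephrasing of the paper's stability-based contradiction, and your converse spells out the dimension-vector comparison that the paper leaves implicit when it writes $N_x\cong j^*j_!(N_x)\cong j^*(M_y)$.
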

\begin{proof}
 Suppose that a closed point $x \in \mathcal{M}(E_C)$ lies in the image of $g_C\colon\mathcal{M}(E) \rightarrow \mathcal{M}(E_C)$. For any closed point $y \in \mathcal{M}(E)$ satisfying $g_C(y)=x$, the corresponding $0$-generated $A$-module $M_y$ satisfies $j^*M_y \cong N_x$. Let $C$ denote the cokernel of the counit map 
 \begin{equation}
 \label{eqn:counit}
 j_!j^*M_y \rightarrow M_y. 
\end{equation}
 If $C$ is nonzero, then the kernel of the surjection $M_y\rightarrow C$ is a proper $A$-submodule of $M_y$ that violates  $\theta$-stability of $M_y$ for any $\theta\in \Theta^+_{\vv}$. The counit map \eqref{eqn:counit} therefore gives the required surjective map to an $A$-module of dimension vector $\vv$.
 
  For the opposite direction, let $j_!(N_x)\to M$ be a surjective $A$-module homomorphism, where $M$ has dimension vector $\vv$. Since $N_x$ is $0$-generated, Lemma~\ref{lem:recollementProperties}\one\ gives that $j_!(N_x)$ is $0$-generated and hence so is $M$. As such, $M$ is a 0-generated $A$-module of dimension $\vv$, so $M$ is $M_y$ for some closed point $y \in \mathcal{M}(E)$.  Then $N_x \cong j^*j_!(N_x) \cong j^*(M_y)$, giving $g_C(y) =x$.
\end{proof}

The following result provides a homological criterion to check whether Proposition~\ref{prop:submoduleSurjection} applies.

\begin{lemma} 
\label{lem:nonzerohoms}
Suppose that $N$ is a 0-generated $A_C$-module of dimension vector $\vv_C$ and $M$ is a 0-generated $A$-module of dimension vector $\vv$. Then
\begin{enumerate}
\item[\one] if $\Hom_{A}(j_!(N),M) \neq 0$, then there exists a surjective map $j_!(N) \twoheadrightarrow M$; and 
\item[\two] if $\Hom_{A}(M,j_!(N)) \neq 0$, then there exists an isomorphism $M \cong j_!(N)$.
\end{enumerate}
\end{lemma}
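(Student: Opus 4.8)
The plan is to exploit the elementary fact that a $0$-generated module is cyclic, generated by any nonzero vector of its degree-$0$ component. Since $0\in C$, Lemma~\ref{lem:recollementProperties} gives $\dim_0 j_!(N)=\dim_0 N=v_0=1$, and likewise $\dim_0 M=v_0=1$, so both $j_!(N)$ and $M$ have one-dimensional degree-$0$ part. If $P$ is a $0$-generated module and $w$ spans $e_0P$, then $P=Ae_0P=Aw$, so any homomorphism $\phi$ out of $P$ is determined by $\phi(w)\in e_0(\text{target})$ and satisfies $\phi\neq 0\iff\phi(w)\neq 0$.

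First I would dispatch part \one. Given $0\neq\phi\in\Hom_A(j_!(N),M)$, write $j_!(N)=Aw$ with $w$ spanning $e_0j_!(N)$ (using that $j_!(N)$ is $0$-generated by Lemma~\ref{lem:recollementProperties}\one). Then $\phi(w)$ is a nonzero element of the one-dimensional space $e_0M$, hence spans it, and since $M$ is $0$-generated we get $M=Ae_0M=A\phi(w)=\im\phi$. Thus $\phi$ itself is the required surjection.

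For part \two, the same generation argument applied to a nonzero $\psi\in\Hom_A(M,j_!(N))$ (now with $M=Au$, $u$ spanning $e_0M$) shows that $\psi(u)$ spans $e_0j_!(N)$ and hence that $\psi$ is surjective, as $j_!(N)$ is $0$-generated. It remains to prove $\psi$ is injective. Here I would apply the exact functor $j^*$ to $\psi$: since $j^*j_!\cong\id$, the map $j^*\psi\colon j^*M\to N$ is a surjection of $A_C$-modules. Both $j^*M$ and $N$ have dimension vector $\vv_C$ (the former because $\dim_i M=v_i$ for $i\in C$), so $j^*\psi$ is an isomorphism, whence $j^*(\ker\psi)=0$. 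By $\im i_*=\ker j^*$ the kernel has the form $i_*(K)$ for some $A/Ae_CA$-module $K$, producing a short exact sequence
\[
0\to i_*(K)\to M\xrightarrow{\ \psi\ } j_!(N)\to 0.
\]

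The main obstacle — and the only place the full recollement structure is needed — is to show $K=0$. I would invoke the vanishing \eqref{eqn:Ext}, which gives $\Ext^1_A(j_!(N),i_*(K))=0$, so the sequence splits: $M\cong j_!(N)\oplus i_*(K)$. Since every $A/Ae_CA$-module is annihilated by $e_C$ and $0\in C$, we have $e_0 i_*(K)=0$, so $e_0M$ lies entirely in the $j_!(N)$ summand; as $M$ is $0$-generated, $M=Ae_0M$ is contained in that summand, forcing $i_*(K)=0$. Hence $\psi$ is an isomorphism. The delicate points to get right are the identification $j^*j_!\cong\id$ and the dimension bookkeeping over $C$ versus its complement, but no hard estimate is involved: once the recollement and the $\Ext$-vanishing \eqref{eqn:Ext} are in hand, the argument is essentially formal.
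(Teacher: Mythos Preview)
Your argument is correct. Part \one\ is essentially identical to the paper's: both observe that a nonzero map between $0$-generated modules with one-dimensional degree-$0$ part is automatically surjective.

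For part \two\ your route diverges from the paper's after establishing that $\psi\colon M\twoheadrightarrow j_!(N)$ is surjective and that $j^*\psi$ is an isomorphism. You analyse $\ker\psi$ directly: recognising it as $i_*(K)$, invoking the Ext-vanishing \eqref{eqn:Ext} to split the extension, and then using $0$-generatedness of $M$ together with $e_0i_*(K)=0$ to kill the kernel. The paper instead builds a surjection in the \emph{other} direction: from $j^*M\cong N$ it obtains the counit $j_!(N)\cong j_!j^*M\to M$, which is surjective because $M$ is $0$-generated; the composite $j_!(N)\twoheadrightarrow M\twoheadrightarrow j_!(N)$ is then a surjective endomorphism of a finite-dimensional module, hence an isomorphism, forcing both factors to be isomorphisms. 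The paper's approach is shorter and avoids the Ext-vanishing entirely, relying only on finite-dimensionality (Lemma~\ref{lem:recollementProperties}\two); yours makes fuller use of the recollement and shows concretely why no piece of $M$ can live over $A/Ae_CA$. Both are clean, but the paper's trick of composing two surjections is worth remembering.
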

\begin{proof}
A nonzero map $f\colon M \rightarrow M'$ between 0-generated $A$-modules with $\dim_0 M = \dim_0 M'=1$ is surjective, because a proper cokernel would contradict the 0-generated condition. Therefore if $\Hom_{A}(j_!(N),M) \neq 0$, Lemma~\ref{lem:recollementProperties}\one\ implies that there is a surjective map $j_!(N) \twoheadrightarrow M$.

Similarly if $\Hom_{A}(M,j_!(N)) \neq 0$ then there is a surjective map $M \twoheadrightarrow j_!(N)$ and hence a surjective map $j^*M \twoheadrightarrow j^*j_!(N) \cong N$. We have $\dim_i N= \dim_i M$ for $i \in C$ by assumption, so this surjective map is an isomorphism $j^*M\cong N$. Since $M$ is 0-generated, the counit map
\[
j_!(N) \cong j_!j^*M \rightarrow M
\]
is also surjective. The composition of the surjective maps $j_!(N) \twoheadrightarrow M \twoheadrightarrow j_!(N)$ gives the desired isomorphism $j_!(N) \cong M$.
\end{proof}

 \section{Cornering the reconstruction algebra}
 Let $G\subset \GL(2,\kk)$ be a finite subgroup that acts without pseudo-reflections. We now apply the results of the previous section to obtain a fine moduli space description of any partial resolution $Y^\prime$ of a quotient surface singularity $\mathbb{A}^2_\kk/G$ such that the minimal resolution $Y\to \mathbb{A}^2_\kk/G$ factors through $Y^\prime$. In fact we prove that every such $Y^\prime$ is the multigraded linear series for a summand of the tautological bundle on the $G$-Hilbert scheme. 

 Let $\Irr(G)$ denote the set of isomorphism classes of irreducible representations of $G$. The \emph{$G$-Hilbert scheme} is the fine moduli space of $G$-equivariant coherent sheaves of the form $\cO_Z$ for $Z\subset \mathbb{A}^2_\kk$ such that $\Gamma(\cO_Z)$ is isomorphic to the regular representation of $G$. The category of $G$-equivariant coherent sheaves is equivalent to the category of finitely generated modules over the skew group algebra $\kk[x,y]\rtimes G$, so the $G$-Hilbert scheme is isomorphic to the fine moduli space $\cM(\kk[x,y]\rtimes G,\vv,\theta)$, where $\vv=(\dim(\rho)^{\oplus \dim(\rho)})_{\rho\in \Irr(G)}$ and $\theta\in \Theta_{\vv}^+$ is a $0$-generated stability condition; here the trivial representation $\rho_0$ is the zero vertex.

Generalising the work of Ito--Nakamura~\cite{ItoNakamura99} for finite subgroups of $\SL(2,\kk)$, Kidoh~\cite{Kidoh01} and Ishii~\cite{Ishii02} proved that the $G$-Hilbert scheme is isomorphic to the minimal resolution $Y$ of $\mathbb{A}^2_\kk/G$.  The summands of the tautological bundle 
\[
T:=\bigoplus_{\rho\in \Irr(G)} T_\rho^{\oplus \dim(\rho)}
\]
 on the $G$-Hilbert scheme are globally generated by Corollary~\ref{cor:ggtautbundles}. To study the multigraded linear series of $T$, we need to know the endomorphism algebra of $T$.
 
 \begin{lemma} 
 \label{lem:TautologicalBundleEndomorphismAlgebra}
For a finite subgroup $G\subset\GL(2,\kk)$ without pseudo-reflections, the endomorphism algebra of $T$ is isomorphic to the skew group algebra $\kk[x,y] \rtimes G$. 
 \end{lemma}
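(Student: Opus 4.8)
The plan is to construct a natural $\kk$-algebra homomorphism $\phi\colon \kk[x,y]\rtimes G\to \End_Y(T)$ and then to identify it, after pushing down to the quotient singularity $X:=\mathbb{A}^2_\kk/G=\Spec R$ with $R:=\kk[x,y]^G$, with the classical \emph{Auslander map}, whose bijectivity is governed precisely by the pseudo-reflection hypothesis. Writing $\pi\colon Y\times \mathbb{A}^2_\kk\to Y$ for the projection and $\cO_{\mathcal{Z}}$ for the structure sheaf of the universal subscheme on the $G$-Hilbert scheme, the $G$-equivariant bundle $\pi_*\cO_{\mathcal{Z}}$ decomposes as $\bigoplus_{\rho\in\Irr(G)} T_\rho\otimes\rho$, so its underlying locally free sheaf is exactly $T$. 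Multiplication by $\kk[x,y]$ and the $G$-equivariant structure are both $\cO_Y$-linear, and together they define $\phi\colon \kk[x,y]\rtimes G\to \End_{\cO_Y}(\pi_*\cO_{\mathcal{Z}})=\End_Y(T)$.

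Next I would pass to the minimal resolution $\tau\colon Y\to X$, where two identifications form the geometric heart of the argument. First, each tautological summand satisfies $\tau_*T_\rho\cong M_\rho:=\Hom_G(\rho,\kk[x,y])$, the reflexive $R$-module attached to $\rho$ by the McKay correspondence: over the smooth locus $U\subseteq X$, whose complement has codimension two, $\tau$ is an isomorphism and the universal subscheme is a free $G$-orbit, so $T_\rho$ restricts to the sheaf associated with $M_\rho\vert_U$; since $\tau_*T_\rho$ is reflexive (the pushforward of a bundle under a resolution of a normal surface) and agrees with the reflexive module $M_\rho$ away from codimension two, the two coincide. Consequently $\tau_*(\pi_*\cO_{\mathcal{Z}})\cong \bigoplus_\rho M_\rho\otimes\rho\cong \kk[x,y]$ as $\kk[x,y]\rtimes G$-modules. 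Second, the natural map $\tau_*\mathcal{E}nd_{\cO_Y}(T)\to \mathcal{E}nd_{\cO_X}(\tau_*T)$ is an isomorphism of reflexive sheaves, again because both sides are reflexive and agree over $U$; taking global sections yields $\End_Y(T)\cong \End_R(\tau_*T)\cong\End_R(\kk[x,y])$, where $\kk[x,y]$ is viewed as an $R$-module via the inclusion $R\subseteq \kk[x,y]$.

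Finally, Auslander's theorem states that for a finite subgroup $G\subset\GL(2,\kk)$ the natural map $\kk[x,y]\rtimes G\to \End_R(\kk[x,y])$ is an isomorphism precisely when $G$ contains no pseudo-reflections. Pushing $\phi$ forward along $\tau$ and using the identifications of the previous paragraph shows that $\phi$ becomes exactly this Auslander map; since the hypothesis rules out pseudo-reflections, $\phi$ is an isomorphism, as required. (One must keep track of left versus right conventions so that the source is $\kk[x,y]\rtimes G$ rather than its opposite, but this is routine given the module conventions fixed in the paper.)

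I expect the main obstacle to be the reflexivity and codimension-two comparisons in the middle step, namely both the identification $\tau_*T_\rho\cong M_\rho$ and the equality $\End_Y(T)\cong\End_R(\tau_*T)$, since this is where one must control the behaviour of the tautological bundle over the exceptional locus of $\tau$. Once these geometric facts are in place, the algebraic input supplied by Auslander's theorem absorbs the no-pseudo-reflection hypothesis cleanly and forces $\phi$ to be bijective.
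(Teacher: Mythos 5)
Your overall strategy is the same as the paper's: push everything down along $\tau\colon Y\to X=\Spec R$, identify $\tau_*T_\rho$ with the McKay module $(\kk[x,y]\otimes\rho^\vee)^G$, identify $\End_Y(T)$ with $\End_R(\tau_*T)$, and finish with Auslander's theorem. The endgame and the comparison over the smooth locus $U$ are fine, but there is a genuine gap in the two reflexivity claims that you yourself flag as the crux. You assert that $\tau_*T_\rho$ is reflexive because it is ``the pushforward of a bundle under a resolution of a normal surface'', and that $\tau_*\mathcal{E}nd_{\cO_Y}(T)$ is reflexive for the same reason. That general principle is false, even for globally generated bundles. For the $A_1$-singularity with minimal resolution $Y=\mathrm{Tot}(\cO_{\PP^1}(-2))\xrightarrow{\;p\;}\PP^1$, the line bundle $p^*\cO_{\PP^1}(2)$ is globally generated, yet $\tau_*\bigl(p^*\cO_{\PP^1}(2)\bigr)\cong\mathfrak{m}\subset R$ is the maximal ideal, whose double dual is $R$; similarly $\tau_*\cO_Y(-C)\cong\mathfrak{m}$ for the exceptional curve $C$. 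So ``reflexive and agrees with $M_\rho$ away from codimension two'' cannot be established from the bare fact that $T_\rho$ is a vector bundle, and the same objection applies to the left-hand side of your comparison $\tau_*\mathcal{E}nd_{\cO_Y}(T)\to\mathcal{E}nd_{\cO_X}(\tau_*T)$.

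What actually closes the gap --- and what the paper's proof invokes --- is Ishii's theorem that each $T_\rho$ is a globally generated \emph{full} sheaf; fullness is exactly the extra cohomological condition that forces $\tau_*T_\rho$ to be the reflexive module $M_\rho$, and the argument of Wemyss~\cite[Lemma~3.6]{Wemyss11}, applied to sums of globally generated full sheaves, then yields $\End_Y(T)\cong\End_R(\tau_*T)$. Once you substitute this input for your parenthetical justifications, the remainder of your argument (including the explicit construction of $\phi$ from the universal family and its identification with the Auslander map, which the paper leaves implicit) does go through and agrees with the paper's proof.
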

 \begin{proof}
 For $\rho\in \Irr(G)$, Ishii~\cite[Corollary 3.2]{Ishii02} proves that the summand $T_\rho$ of $T$ is a globally generated full sheaf. The argument of Wemyss~\cite[Lemma 3.6]{Wemyss11} applies to globally generated full sheaves to give $\End_Y(T) \cong \End_{\kk[x,y]^G}(\pi_* T)$ for the resolution $\pi\colon Y \rightarrow \Spec \kk[x,y]^G$. By construction the $\kk[x,y]^G$-module $\pi_* T_{\rho}$ is isomorphic to $(\kk[x,y] \otimes \rho^{\vee})^G$.  Taking the sum over all $\rho^\vee\in \Irr(G)$ and relabelling, it follows from Auslander~\cite{AuslanderRationalSingularities} that 
 \[
 \End_Y(T)\cong \End_{\kk[x,y]^G} \left(\bigoplus_{\rho \in \text{Irr}(G)} \left( \left(\kk[x,y] \otimes \rho \right) ^G \right)^{\oplus \dim \rho} \right) \cong \kk[x,y] \rtimes G
 \]
 as required.
 \end{proof}

 It follows that the minimal resolution $Y$ of $\mathbb{A}^2_{\kk}/G$ is isomorphic to the multigraded linear series $\mathcal{M}(T)$. If we set $T_{\Irr}:=\bigoplus_{\rho\in \Irr(G)} T_\rho$ and define $\vv_{\Irr}=(\dim\rho)_{\rho\in \Irr(G)}$, then 
 \[
 B:=\End_Y(T_{\Irr})
 \]
 is Morita equivalent to the skew group algebra, and Lemma~\ref{lem:Morita} implies that $Y$ is isomorphic to $\cM(T_{\Irr})$; put another way, the universal morphism $f\colon Y \cong \cM(T) \to \cM(T_{\Irr})$ determined by the vector bundle $T_{\Irr}$ on $Y$ is an isomorphism. 
 
 Recall that an irreducible representation $\rho\in \Irr(G)$ is said to be \emph{special} if $H^1(T_{\rho}^{\vee})=0$. For the cornering set $\Spe(G):= \{ \rho \in \Irr(G) \mid \rho \text{ is special} \}$ and the idempotent $e:=\sum_{\rho\in \Spe(G)} e_\rho$, we obtain the \emph{reconstruction bundle}
 \[
 E:= T_{\Spe}=\bigoplus_{\rho\in \Spe(G)} T_\rho.
 \]
 Note that $T_{\Spe}\subseteq T_{\Irr}\subseteq T$ are subbundles. The argument in the proof of Lemma~\ref{lem:TautologicalBundleEndomorphismAlgebra} shows that the endomorphism algebra 
  \[
  A:= \End_Y(E)\cong eBe;
  \]
 of the reconstruction bundle $E$ is the \emph{reconstruction algebra} introduced by Wemyss~\cite{WemyssGL211} in general, and by Wemyss~\cite{Wemyss11} and Craw~\cite{CrawSpecial11} for a cyclic subgroup $G\subset \GL(2,\kk)$. The reconstruction bundle is of interest precisely because it is a tilting bundle by work of Van den Bergh~\cite{VdB04Duke}. Buchweitz--Hille~\cite[Proposition~2.6]{BuchweitzHille13} implies that the dual bundle $E^\vee$ is also a tilting bundle, so 
 \begin{equation}
 \label{eqn:ECdual}
 \Phi(-):=\mathbf{R}\!\Hom_Y(E^\vee,-)\colon D^b(Y) \longrightarrow D^b(A)
 \end{equation}
 is an equivalence of categories; put simply, the algebra $A$ enables one to reconstruct the derived category of the minimal resolution $Y$. We now show that $A$ enables us to reconstruct $Y$ itself.
  
\begin{proposition} 
\label{prop:reconstructionisomorphism}
 For a finite subgroup $G\subset\GL(2,\kk)$ without pseudo-reflections, the minimal resolution $Y$ of $\mathbb{A}^2_\kk/G$ is isomorphic to the multigraded linear series $\mathcal{M}(E)$, that is, to the fine moduli space of $0$-generated $A$-modules of dimension vector $\vv_{\Spe}=(\dim(\rho))_{\rho\in \Spe(G) }$.
\end{proposition}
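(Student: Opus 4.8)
The plan is to prove that the universal morphism $f\colon Y\to\cM(E)$ is an isomorphism. Since $E=T_{\Spe}$ is the subbundle of $T_{\Irr}$ cut out by the cornering set $C=\Spe(G)\subseteq\Irr(G)$, Proposition~\ref{prop:functor} factors $f$ as the composite of the isomorphism $Y\cong\cM(T_{\Irr})$ established above with the cornering morphism $g_C\colon\cM(T_{\Irr})\to\cM(E)$ for the cornering $B\to A=eBe$. It therefore suffices to prove that $g_C$ is both a closed immersion and surjective. For the closed immersion, by Theorem~\ref{thm:multigradedlinearseries} and Remark~\ref{rem:ample} it is enough to check that $L:=\bigotimes_{\rho\in\Spe(G)}\det(T_\rho)$ is ample on $Y$. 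The trivial summand contributes $\det(T_{\rho_0})=\cO_Y$, while by the special McKay correspondence the first Chern classes of the nontrivial special bundles $T_\rho$ form the basis of $\Pic(Y)$ dual to the classes of the exceptional curves. Hence $L$ meets every exceptional curve positively, so $L$ is relatively ample over the affine base $\mathbb{A}^2_\kk/G$ and therefore ample.

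For surjectivity, recall that the closed points of $\cM(E)$ are the $\theta$-stable $A$-modules $N_x$ of dimension vector $\vv_{\Spe}$, and that $g_C$ sends $y\in\cM(T_{\Irr})\cong Y$ to the isomorphism class of $j^*((T_{\Irr})_y)\cong E_y$. Thus $g_C$ is surjective precisely when every $\theta$-stable $A$-module of dimension vector $\vv_{\Spe}$ is isomorphic to a fibre $E_y$ of the reconstruction bundle. This is the same condition supplied by Proposition~\ref{prop:submoduleSurjection}: one must show that $j_!(N_x)$ surjects onto a $B$-module of dimension vector $\vv_{\Irr}$, and since any $0$-generated $B$-module of dimension vector $\vv_{\Irr}$ is automatically $\theta$-stable (its only destabilising submodule would violate the $0$-generated condition) and hence of the form $(T_{\Irr})_y$, applying the exact functor $j^*$ together with $j^*j_!\cong\mathrm{id}$ reduces the existence of such a surjection to the isomorphism $N_x\cong E_y$.

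To establish this last assertion I would exploit that $E$ is a tilting bundle and $Y$ is smooth: by Bergman--Proudfoot~\cite[Theorem~2.4]{BP08} the morphism $f$ is an isomorphism onto a connected component of $\cM(E)$, so it remains only to prove that $\cM(E)$ is connected. Cornering further to $\{0\}$ yields, via Proposition~\ref{prop:functor}, a projective morphism $\cM(E)\to\Spec(e_0Ae_0)=\mathbb{A}^2_\kk/G$ through which $\pi\colon Y\to\mathbb{A}^2_\kk/G$ factors; over the smooth locus of $\mathbb{A}^2_\kk/G$ the fibre of this morphism is a single stable module, so $f(Y)$ is the unique component dominating the base and any further component must lie over the singular point. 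Under the equivalence $\Phi=\mathbf{R}\!\Hom_Y(E^\vee,-)$, the stable $A$-modules lying over the singular point correspond to stable objects of $D^b(Y)$ supported on the one-dimensional exceptional fibre. The main obstacle is to rule out extra such modules, that is, to show that the only stable objects of the relevant numerical class supported on the exceptional fibre are the skyscrapers $\cO_y$; I would handle this using $\theta$-stability together with the explicit structure of the reconstruction algebra over the exceptional fibre (in the spirit of the geometric special McKay correspondence of Craw~\cite{CrawSpecial11}), thereby forcing $\cM(E)=f(Y)$ and completing the proof.
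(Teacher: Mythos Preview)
Your reduction to showing that $g_{C}\colon\cM(T_{\Irr})\to\cM(E)$ is a closed immersion and surjective is correct, and your ampleness argument for the closed-immersion part matches the paper's. The gap is in surjectivity. Your second paragraph does not use Proposition~\ref{prop:submoduleSurjection} to prove anything: you unwind its criterion and observe that it is equivalent to the statement you want, which is circular. You then switch in the third paragraph to Bergman--Proudfoot plus connectedness of $\cM(E)$, but at the key step---ruling out extra $\theta$-stable $A$-modules supported over the singular point---you write only that you ``would handle this using $\theta$-stability together with the explicit structure of the reconstruction algebra''. That is precisely the content that needs to be supplied; without it, this is not a proof.

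The paper's argument is quite different and avoids any case analysis over the exceptional fibre. It applies Proposition~\ref{prop:submoduleSurjection} directly by computing the dimension vector of the $B$-module $j_!(N_x)$ for \emph{every} $x\in\cM(E)$. The tool is the semiorthogonal decomposition $D^b(B)=\langle\,\langle S_\rho\mid\rho\notin\Spe(G)\rangle,\ \langle P_\rho\mid\rho\in\Spe(G)\rangle\,\rangle$ of Ishii--Ueda, together with the fact that $j_!(N_x)$ is left-orthogonal to the first factor by \eqref{eqn:Ext}. This places $j_!(N_x)$ in the essential image of $\Phi_{\Irr}$, and an Euler-form calculation against $T_\rho^\vee$ then pins down $[\Psi_{\Irr}(j_!(N_x))]=[\cO_y]$ in $\Knumc(Y)$, forcing $\dim_\rho j_!(N_x)=\dim\rho$ for \emph{all} $\rho\in\Irr(G)$. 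In particular $j_!(N_x)$ already has dimension vector $\vv_{\Irr}$, so the identity map is the surjection demanded by Proposition~\ref{prop:submoduleSurjection}, and $g_{C}$ is surjective. No connectedness argument or explicit analysis of the reconstruction algebra over the fibre is needed.
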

\begin{proof}
 Since $Y\cong \mathcal{M}(T_{\Irr})$ and since $E$ is the summand of $T_{\Irr}$ corresponding to the cornering subset $\Spe(G)\subseteq \Irr(G)$, we need only prove that $g_{\Spe}\colon \mathcal{M}(T_{\Irr}) \rightarrow \mathcal{M}(T_{\Spe})$ is an isomorphism. The line bundle $\bigotimes_{\rho\in \Spe(G)} \det(T_\rho)$ has positive degree on each exceptional curve in $Y$, so it's ample. Applying Theorem~\ref{thm:multigradedlinearseries} and specifically Remark~\ref{rem:ample} shows that $g_{\Spe}$ is a closed immersion. It remains to prove that $g_{\Spe}$ is surjective. 
 
 Each point $x \in \mathcal{M}(T_{\Spe})$ corresponds to an $A$-module $N_x$ of dimension vector $\vv_{\Spe}$, and we now calculate the dimension vector of the $B$-module $j_!(N_x)$. The abelian category of finitely generated modules over $B$ has an indecomposable projective module $P_{\rho}$ and a one-dimensional vertex simple module $S_{\rho}$ for each $\rho\in \Irr(G)$. Since $\kk[x,y]\rtimes G$ is Morita equivalent to $B$, the observation of Ishii--Ueda~\cite[Proposition~1.1]{IshiiUeda15} gives a semiorthogonal decomposition
\[
D^b(B)\cong \big\langle \langle S_{\rho}\mid \rho \notin \Spe(G) \rangle, \langle P_{\rho} \mid \rho \in \Spe(G) \rangle\big\rangle,
\] 
 and the bundle $T_{\Irr}^\vee$ on the $G$-Hilbert scheme $Y$ defines a fully faithful embedding 
\[
\Phi_{\Irr}(-)=\mathbf{R}\!\Hom_Y(T_{\Irr}^\vee,-)\colon D^b(Y) \longrightarrow D^b(B) 
\]
with essential image $\langle P_{\rho} \mid \rho \in \Spe(G) \rangle$. Since $j_!(N_x)$ is left-orthogonal to $\langle S_\rho \mid \rho \notin \Spe(G) \rangle$ by \eqref{eqn:Ext}, it follows that $j_!(N_x)$ lies in the essential image of $\Phi_{\Irr}$ and $j_!(N_x) \cong \Phi_{\Irr}(\mathcal{E})$ for some $\mathcal{E} \in D^b(Y)$. The functor $\Phi_{\Irr}$ has a left adjoint $\Psi_{\Irr}$ such that $T_{\rho}^\vee\cong \Psi_{\Irr}(P_{\rho})$ and we calculate the dimension of $j_!(N_x)$ via the Euler form as
\begin{equation}
\label{eqn:dimj!Nx}
\dim_{\rho} j_!(N_x) = \chi_{B}(P_{\rho},j_!(N_x)) =  \chi_{B}(P_{\rho},\Phi_{\Irr}(\mathcal{E}))
= \chi_{Y}(\Psi_{\Irr}(P_{\rho}),\mathcal{E})
= \chi_{Y}(T_{\rho}^\vee,\mathcal{E})
\end{equation}
 for all $\rho\in \Irr(G)$. Lemma~\ref{lem:recollementProperties}\two\ gives $\dim_{\rho} j_!(N) = \dim \rho$ for $\rho \in \Spe(G)$, and hence
 \[
\chi_Y(T_{\rho}^{\vee},\mathcal{E}) = \dim \rho
 \]
for $\rho \in \Spe(G)$. Since the essential image of $\Phi_{\Irr}$ is generated by the indecomposable projectives $\{P_\rho \mid \rho \in \Spe(G)\}$, the classes of the bundles $T_\rho^\vee=\Psi_{\Irr}(P_\rho)$ for $\rho \in \Spe(G)$ generate the Grothendieck group $K(Y)$. By Lemma \ref{lem:KnumcY}, the Euler form $\chi_Y\colon K(Y)\times \Knumc(Y)\to \ZZ$ is a perfect pairing and hence as 
 \[
 \chi_Y(T_{\rho}^{\vee},\mathcal{E}) = \dim \rho = \rk T_{\rho}^{\vee}=  \chi_Y(T_{\rho}^{\vee},\mathcal{O}_y)
 \]
 for $\rho \in \Spe(G)$. It follows that
 \[
 [\mathcal{E}]=[\mathcal{O}_y] \in \Knumc(Y).
 \] 
As a consequence 
\[
\chi_{Y}(T_{\rho}^\vee,\mathcal{E})= \chi_{Y}(T_{\rho}^\vee,\mathcal{O}_y)=\rk T_\rho^\vee = \dim \rho
\]
 for all $\rho\in \Irr(G)$. Equation \eqref{eqn:dimj!Nx} now implies that $ \dim j_!(N_x)= (\dim\rho)_{\rho\in \Irr(G)}=\vv_{\Irr}$ for any $x\in \mathcal{M}(T_{\Spe})$. We deduce from Proposition \ref{prop:submoduleSurjection} that $g_{\Spe}\colon\mathcal{M}(T_{\Irr}) \rightarrow \mathcal{M}(T_{\Spe})$ is surjective.
\end{proof}

\begin{remark}
 Proposition~\ref{prop:reconstructionisomorphism} also follows from Karmazyn~\cite[Corollary~5.4.5]{Karmazyn14}, because the reconstruction bundle is a tilting bundle. When $G\subset \GL(2,\kk)$ is cyclic, an explicit construction of the isomorphism $Y\cong \mathcal{M}(A,\vv_{\Spe},\theta)$ for $\theta\in \Theta_{\vv_{\Spe}}^+$ was first given by \cite{Wemyss11,CrawSpecial11}.
\end{remark}

 We now strengthen Proposition~\ref{prop:reconstructionisomorphism} by providing a fine moduli construction for partial resolutions of $\mathbb{A}^2_\kk/G$ when $G\subset \GL(2,\kk)$ is a finite subgroup without pseudo-reflections. Let $E$ and $A=\End_Y(E)$ denote the reconstruction bundle and reconstruction algebra respectively, and let $C \subseteq \Spe(G)=\{\rho\in \Irr(G) \mid \rho \text{ is special}\}$ be any subset that contains the trivial representation $\rho_0$ of $G$. As in the proof of Proposition~\ref{prop:functor}, we obtain a morphism 
 \begin{equation}
 \label{eqn:reconstructioncontraction}
 g_{C} = f_{\Spe(G),C}\colon Y=\mathcal{M}(E)\longrightarrow \mathcal{M}(E_{C})
 \end{equation}
 from the minimal resolution $Y$ of $\mathbb{A}^2_{\kk}/G$. Again, Theorem~\ref{thm:multigradedlinearseries} shows that the image of $g_{C}$ is isomorphic to the image of the morphism to the linear series of $\bigotimes_{\rho\in C} \det(T_\rho)$. The construction of the tilting bundle $E$ on $Y$ \cite[Section~3.4]{VdB04Duke} implies that the invertible sheaves 
 \[
 \{\det(T_\rho)\mid \rho\in \Spe(G)\setminus \rho_0\}
 \]
 provide the integral basis of $\Pic(Y)$ dual to the curve classes defined by the irreducible components of the exceptional divisor of the resolution $Y\to \mathbb{A}^2_\kk/G$. It follows that the morphism $g_{C}$ from \eqref{eqn:reconstructioncontraction} contracts precisely those exceptional curves corresponding to irreducible representations $\rho\in \Spe(G)\setminus C$. 

\begin{theorem} 
\label{thm:surjectioncontraction}
 Let $G\subset \GL(2,\kk)$, be a finite subgroup without pseudo-reflections, and let $Y^\prime$ be any partial resolution of $\mathbb{A}^2_\kk/G$ such that the minimal resolution $Y\to \mathbb{A}^2_\kk/G$ factors via $Y^\prime$. There is a cornering set $C \subseteq \Spe(G)$ containing $\rho_0$ such that $Y^\prime$ is isomorphic to $\mathcal{M}(E_{C})$.  
\end{theorem}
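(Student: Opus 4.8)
The plan is to reduce the statement about an arbitrary partial resolution $Y^\prime$ to the surjectivity of a single cornering morphism $g_C$, exactly as in the proof of Proposition~\ref{prop:reconstructionisomorphism}. First I would identify the correct cornering set $C$. Since the minimal resolution $Y\to \mathbb{A}^2_\kk/G$ factors through $Y^\prime$, the morphism $Y\to Y^\prime$ contracts a subset of the exceptional curves in $Y$. By the discussion following \eqref{eqn:reconstructioncontraction}, the invertible sheaves $\{\det(T_\rho)\mid \rho\in \Spe(G)\setminus \rho_0\}$ form the basis of $\Pic(Y)$ dual to the exceptional curve classes, so I would take $C\subseteq \Spe(G)$ to consist of $\rho_0$ together with those $\rho$ whose dual curve is \emph{not} contracted by $Y\to Y^\prime$. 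With this choice, the line bundle $\bigotimes_{\rho\in C}\det(T_\rho)$ has positive degree precisely on the curves surviving in $Y^\prime$ and degree zero on the contracted ones. Theorem~\ref{thm:multigradedlinearseries} then identifies the image of $g_C\colon Y=\mathcal{M}(E)\to \mathcal{M}(E_C)$ with the image of the morphism to the linear series of this line bundle, which is exactly the contraction realising $Y^\prime$; hence the image of $g_C$ is isomorphic to $Y^\prime$.

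It then remains to prove that $g_C$ is surjective onto $\mathcal{M}(E_C)$, for then the image is all of $\mathcal{M}(E_C)$ and the identification of the image with $Y^\prime$ completes the proof. For surjectivity I would invoke Proposition~\ref{prop:submoduleSurjection}: it suffices to show that for each closed point $x\in \mathcal{M}(E_C)$, the $A_C$-module $j_!(N_x)$ admits a surjection onto an $A$-module of dimension vector $\vv_{\Spe}$. The strategy is to compute the dimension vector of $j_!(N_x)$ directly and show it equals $\vv_{\Spe}$, so that $j_!(N_x)$ itself is the required module. Here the functors $j_!,j^*$ refer to the recollement attached to the cornering $A_C=e_CAe_C\subseteq A$, where now $A$ is the reconstruction algebra playing the role that $B$ played in Proposition~\ref{prop:reconstructionisomorphism}.

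The computation of $\dim j_!(N_x)$ is the technical heart and I would run it in close parallel with \eqref{eqn:dimj!Nx}. Using Ishii--Ueda~\cite[Proposition~1.1]{IshiiUeda15} I would obtain a semiorthogonal decomposition of $D^b(A)$ separating the simples $S_\rho$ for $\rho\in \Spe(G)\setminus C$ from the projectives $P_\rho$ for $\rho\in C$, with the fully faithful embedding $\Phi_C$ coming from the bundle $E_C^\vee$ having essential image $\langle P_\rho\mid \rho\in C\rangle$. Since $j_!(N_x)$ is left-orthogonal to the $S_\rho$ with $\rho\notin C$ by \eqref{eqn:Ext}, it lies in this essential image, so $j_!(N_x)\cong \Phi_C(\mathcal{E})$ for some $\mathcal{E}\in D^b(Y)$. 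Computing dimensions via the Euler form gives $\dim_\rho j_!(N_x)=\chi_Y(T_\rho^\vee,\mathcal{E})$ for all $\rho\in \Spe(G)$; Lemma~\ref{lem:recollementProperties}\two\ pins this down as $\dim\rho$ for $\rho\in C$, and since the classes $\{[T_\rho^\vee]\mid \rho\in C\}$ generate $K(Y)$ and the Euler pairing against $\Knumc(Y)$ is perfect (Lemma~\ref{lem:KnumcY}), I would conclude $[\mathcal{E}]=[\mathcal{O}_y]\in \Knumc(Y)$. This forces $\chi_Y(T_\rho^\vee,\mathcal{E})=\rk T_\rho^\vee=\dim\rho$ for \emph{all} $\rho\in \Spe(G)$, whence $\dim j_!(N_x)=\vv_{\Spe}$ and Proposition~\ref{prop:submoduleSurjection} applies.

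The main obstacle I anticipate is verifying that the classes $\{[T_\rho^\vee]\mid \rho\in C\}$ generate $K(Y)$ when $C$ is a \emph{proper} subset of $\Spe(G)$, which was automatic in Proposition~\ref{prop:reconstructionisomorphism} but is not obvious here. The resolution should be to work over $Y^\prime$ rather than $Y$: the relevant Grothendieck group is that of the contracted space $Y^\prime$, and the perfect-pairing argument must be carried out using $\Knumc(Y^\prime)$ so that the rank-one contributions $[\mathcal{O}_y]$ live on the correct space and the dimension count closes up. Making precise the compatibility between the semiorthogonal decomposition, the embedding $\Phi_C$, and the geometry of the partial contraction $Y\to Y^\prime$ is where the care is needed.
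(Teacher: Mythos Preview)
Your identification of the cornering set $C$ and the reduction to surjectivity of $g_C$ are exactly right and match the paper. The divergence is in the surjectivity argument, and there is a genuine gap in your approach.

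You attempt to rerun the dimension-vector computation from Proposition~\ref{prop:reconstructionisomorphism}, but that argument relied on two ingredients that do not transfer to a proper subset $C\subsetneq\Spe(G)$. First, the semiorthogonal decomposition from Ishii--Ueda~\cite{IshiiUeda15} is specific to the split between \emph{special} and \emph{non-special} representations; there is no analogous decomposition of $D^b(A)$ that separates $\{S_\rho\mid \rho\in\Spe(G)\setminus C\}$ from $\{P_\rho\mid \rho\in C\}$ for an arbitrary $C$, so your functor $\Phi_C$ with the required essential image is not available. Second, as you yourself note, the classes $\{[T_\rho^\vee]\mid \rho\in C\}$ do not generate $K(Y)$ when $C$ is proper, since $K(Y)$ has rank $\lvert\Spe(G)\rvert$ by Lemma~\ref{lem:KnumcY}; the perfect-pairing step therefore cannot pin down $[\mathcal{E}]$. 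Your proposed fix of passing to $Y^\prime$ runs into the difficulty that $Y^\prime$ is singular in general, so neither the tilting machinery underlying Lemma~\ref{lem:KnumcY} nor the fully faithful embedding $\Phi_C$ is readily available there.

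The paper sidesteps all of this by not computing $\dim j_!(N_x)$ at all. Instead it argues by contradiction: if $g_C$ is not surjective, then Lemma~\ref{lem:nonzerohoms} forces $\Hom_A(j_!(N),M)=\Hom_A(M,j_!(N))=0$ for every $0$-generated $M$ of dimension $\vv_{\Spe}$. Applying the derived equivalence $\Phi$ of \eqref{eqn:ECdual} (which \emph{is} available, since the full reconstruction bundle $E$ is tilting on the smooth surface $Y$) and then Serre duality on $Y$ gives $\Hom^0(\mathcal{E},\mathcal{O}_y)=\Hom^2(\mathcal{E},\mathcal{O}_y)=0$ for all $y$. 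Bridgeland--Maciocia~\cite[Proposition~5.4]{BridgelandMaciocia02} then bounds the homological dimension of $\mathcal{E}$ by zero, so $\mathcal{E}$ is a sheaf; but \cite[Corollary~5.5]{BridgelandMaciocia02} forces its support to have codimension zero, contradicting properness. This argument uses only the equivalence over $Y$ and never needs to identify $[\mathcal{E}]$ in $\Knumc$, which is precisely the step your approach cannot complete.
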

\begin{proof}
 The partial resolution $Y^\prime$ is obtained by contracting a set of components of the exceptional divisor in $Y$. Remove from $\Spe(G)$ those $\rho$ such that $\det(T_\rho)$ is dual to one of the curves being contracted, leaving a subset $C\subset \Spe(G)$ containing $\rho_0$ such that the contraction $Y\to Y^\prime$ contracts precisely the same curves as the morphism $g_{C}$ from \eqref{eqn:reconstructioncontraction}. The result follows once we prove that $g_{C}$ is surjective. 
 
 By Proposition \ref{prop:submoduleSurjection}, we need only show that for any $0$-generated $A_{C}$-module $N$ of dimension $\vv_{C}:=(\dim \rho)_{\rho\in C}$, there is a surjective $A$-module homomorphism $j_!(N) \twoheadrightarrow M$, where $M$ has dimension $\vv_{\Spe}$. For any such $N$, the derived equivalence $\Phi$ from \eqref{eqn:ECdual} gives $\mathcal{E} \in D^b(Y)$ such that $\Phi(\mathcal{E}) \cong j_!(N)$, and \cite[Proposition~7.2.1]{BayerCrawZhang16} implies that $\mathcal{E}$ has proper support because $j_!(N)$ is finite dimensional over $\kk$. Suppose that $j_!(N)$ does not admit a surjective $A$-module homomorphism $j_!(N) \twoheadrightarrow M$, where $M$ is $0$-generated of dimension $\vv_{\Spe}$. Lemma \ref{lem:nonzerohoms} implies that
\begin{equation}
\label{eqn:HomAC}
0=\Hom_{A}(j_!(N),M)=\Hom_{A}(M,j_!(N)).
\end{equation}
Every $0$-generated $A$-module $M$ of dimensional vector $\vv_{\Spe}$ is of the form $\Phi(\mathcal{O}_y)$ for some $y \in Y$. Applying the quasi-inverse of $\Phi$ to \eqref{eqn:HomAC} gives
\[
0=\Hom_{D^b(Y)}(\mathcal{E},\mathcal{O}_y)=\Hom_{D^b(Y)}(\mathcal{O}_y,\mathcal{E})
\]
for all closed points $y \in Y\cong\mathcal{M}(E)$. Since $Y$ is smooth, there is a dualising line bundle $\omega_Y$ that induces Serre duality on objects with compact support such that 
\[
0 =\Hom_{D^b(Y)}(\mathcal{O}_y,\mathcal{E})  =\Hom^2_{D^b(Y)}(\mathcal{E}, \omega_Y \otimes\mathcal{O}_y)=\Hom^2_{D^b(Y)}(\mathcal{E}, \mathcal{O}_y)
\]
 for all closed points $y \in Y$. Bridgeland--Maciocia~\cite[Proposition 5.4]{BridgelandMaciocia02} implies that the object $\mathcal{E}$ has homological dimension $0$. The codimension of the support of $\mathcal{E}$ is bounded above by its homological dimension \cite[Corollary 5.5]{BridgelandMaciocia02}, so the support of $\mathcal{E}$ must be of codimension $0$; this is a contradiction, because $\mathcal{E}$ has proper support. This completes the proof.
\end{proof}

\begin{remark}
 An analogous result in the complete local setting can be deduced by combining Karmazyn~\cite[Corollary~5.2.5]{Karmazyn14} with Iyama--Kalck--Wemyss--Yang~\cite[Theorem~4.6]{KIWY}. 
 \end{remark}
 
\begin{example} 
\label{ex:McKayReconstruction}
To illustrate Proposition~\ref{prop:reconstructionisomorphism} and Theorem~\ref{thm:surjectioncontraction}, consider the subgroup 
\[
G:=\left\langle \left( \begin{array}{c c} \varepsilon_4 & 0 \\ 0 & \varepsilon_4^{-1} \end{array} \right) , \left( \begin{array}{c c} 0 &\varepsilon_4 \\ \varepsilon_4 & 0 \end{array} \right), \left( \begin{array}{c c} 0 &\varepsilon_6 \\ \varepsilon_6 & 0 \end{array} \right) \right \rangle \subset \GL(2,\kk),
\]
where $\varepsilon_r$ is a primitive $r^{th}$ root of unity. This group $G$ is the direct product of the quaternion group of order 8 with a cyclic group of order 3; it has 24 elements and 15 irreducible representations, 3 of dimension two and 12 of dimension one. Below we draw: (a) the McKay quiver (taken with the mesh relations) where we mark the special representations labelled $0,1,\dots, 4$; and (b) the Special McKay quiver with the given relations which provides a presentation for the reconstruction algebra $A$. 
\begin{align*}
\begin{aligned}
\begin{tikzpicture} [bend angle=5, looseness=1]
\node (C1) at (0,2)  {$\bullet$};
\node (C2) at (1.7,-1)  {$\bullet$};
\node (C3) at (-1.7,-1)  {$\bullet$};
\node(B1) at (0.425,0.25) {$0$};
\node(B2) at (0.85,0.5) {$\bullet$};
\node(B3) at (1.275,0.75) {$\bullet$};
\node(B4) at (1.7,1) {$\bullet$};
\node(D1) at (0,-0.5) {$\bullet$};
\node(D2) at (0,-1) {$1$};
\node(D3) at (0,-1.5) {$2$};
\node(D4) at (0,-2) {$3$};
\node(F) at (0,-3){(a)};
\node(E1) at (-0.425,0.25) {$4$};
\node(E2) at (-0.85,0.5) {$\bullet$};
\node(E3) at (-1.275,0.75) {$\bullet$};
\node(E4) at (-1.7,1) {$\bullet$};
\draw [->,bend left] (C1) to node[]  {} (B1);
\draw [->,bend left] (C1) to node[]  {} (B2);
\draw [->,bend left] (C1) to node[]  {} (B3);
\draw [->,bend left] (C1) to node[]  {} (B4);
\draw [->,bend left] (B1) to node[]  {} (C2);
\draw [->,bend left] (B2) to node[]  {} (C2);
\draw [->,bend left] (B3) to node[]  {} (C2);
\draw [->,bend left] (B4) to node[]  {} (C2);
\draw [->,bend left] (C2) to node[]  {} (D1);
\draw [->,bend left] (C2) to node[]  {} (D2);
\draw [->,bend left] (C2) to node[]  {} (D3);
\draw [->,bend left] (C2) to node[]  {} (D4);
\draw [->,bend left] (D1) to node[]  {} (C3);
\draw [->,bend left] (D2) to node[]  {} (C3);
\draw [->,bend left] (D3) to node[]  {} (C3);
\draw [->,bend left] (D4) to node[]  {} (C3);
\draw [->,bend left] (C3) to node[]  {} (E1);
\draw [->,bend left] (C3) to node[]  {} (E2);
\draw [->,bend left] (C3) to node[]  {} (E3);
\draw [->,bend left] (C3) to node[]  {} (E4);
\draw [->,bend left] (E1) to node[]  {} (C1);
\draw [->,bend left] (E2) to node[]  {} (C1);
\draw [->,bend left] (E3) to node[]  {} (C1);
\draw [->,bend left] (E4) to node[]  {} (C1);
\end{tikzpicture}
\end{aligned}
& \quad \quad \quad \quad
\begin{aligned}
\begin{tikzpicture} [bend angle=15, looseness=1]
\node (C1) at (0,0)  {$0$};
\node (C2) at (-2,-2)  {$1$};
\node (C3) at (0, -2){$2$};
\node (C4) at (2,-2) {$3$};
\node (C5) at (0,-4) {$4$};
\node (D) at (2,-5) {(b)};
\draw [->,bend left] (C1) to node[gap]  {\scriptsize{$a$}} (C2);
\draw [->,bend left] (C2) to node[gap]  {\scriptsize{$a^*$}} (C1);
\draw [->,bend left] (C2) to node[gap]  {\scriptsize{$b$}} (C5);
\draw [->,bend left] (C5) to node[gap]  {\scriptsize{$b^*$}} (C2);
\draw [->,bend left] (C1) to node[gap]  {\scriptsize{$c$}} (C3);
\draw [->,bend left] (C3) to node[gap]  {\scriptsize{$c^*$}} (C1);
\draw [->,bend left] (C3) to node[gap]  {\scriptsize{$d$}} (C5);
\draw [->,bend left] (C5) to node[gap]  {\scriptsize{$d^*$}} (C3);
\draw [->,bend left] (C1) to node[gap]  {\scriptsize{$e$}} (C4);
\draw [->,bend left] (C4) to node[gap]  {\scriptsize{$e^*$}} (C1);
\draw [->,bend left] (C4) to node[gap]  {\scriptsize{$f$}} (C5);
\draw [->,bend left] (C5) to node[gap]  {\scriptsize{$f^*$}} (C4);
\end{tikzpicture}
\end{aligned}
\quad
\begin{aligned}
&a^*a=c^*c=e^*e\ \\
&aa^*=b^*b, \\
&cc^*=d^*d, \\
&ee^*=f^*f \\
&bb^*=dd^*=ff^*\\
&ba-dc=fe\\
& \\
&
\end{aligned}
\end{align*}
(The mesh relations kill the paths between vertices 0 and 4 that don't factor through 1, 2, or 3, so there are no arrows in the Special McKay quiver between 0 and 4.)  In this case, the exceptional divisor of the minimal resolution $Y\to\mathbb{A}^2_\kk/G$ consists of three $(-2)$-curves meeting a central $(-3)$-curve as shown. 
\begin{align*}
\begin{aligned}
\begin{tikzpicture}   
\draw[blue,thick] (-0.6,-0.2) to [bend left=25] node[pos=0.48, right] {} (0.6,-0.2);
\draw[red,thick] (0,-0.35) to [bend right=0] node[pos=0.48, right] {} (0,0.8);
\draw[red, thick] (-1,0.3) to [bend left=25] node[pos=0.48, right] {} (-0.3,-0.3);
\draw[red,thick] (1,0.3) to [bend right=25] node[pos=0.48, right] {} (0.3,-0.3);
\draw[black] (0,0) ellipse (1.5 and 1);
\draw[->] (0,-1.1) -- (0,-1.5);
\draw[black] (0,-2) ellipse (0.9 and 0.4);
\filldraw [black] (0,-2) circle (1pt);
\end{tikzpicture} 
\end{aligned}
&
\qquad
\begin{aligned}
\begin{tikzpicture}[node distance=1cm, main node/.style={circle,fill=red!60,draw,font=\sffamily\Large\bfseries}]
  \node[circle,fill=blue!60,draw] (1) at (0,0) {};
  \node[main node] (2) at (-1,0) {};
  \node[main node] (3) at (1,0) {};
  \node[main node] (4)  at (0,1) {};
\node (1l) at (0,-0.4) {\scriptsize{-3}};
\node (2l) at (-1,-0.4) {\scriptsize{-2}};
\node (3l) at (1,-0.4) {\scriptsize{-2}};
\node (4l) at (0.4,1) {\scriptsize{-2}};
 \draw [ultra thick] (1) to node {} (2);
 \draw [ultra thick] (1) to node {} (3);
 \draw [ultra thick] (1) to node {} (4);
\end{tikzpicture}
\end{aligned}
\end{align*}
 The minimal resolution $Y$ can be constructed either as the $G$-Hilbert scheme, whose tautological bundle $T$ has 15 non-isomorphic summands, or as the fine moduli space $\mathcal{M}(E)$ of $0$-generated modules over the reconstruction algebra, in which case the tautological bundle $\bigoplus_{\rho\in \Spe(G)} T_\rho$ has 5 non-isomorphic summands. 

To illustrate Theorem~\ref{thm:surjectioncontraction}, consider the cornering subset $C=\{0,1,2,3\}$ of the set $\Spe(G)=\{0,1,2,3,4\}$ of special representations of $G$. The algebra $A_{C}$ obtained from the reconstruction algebra $A$ can be presented using the following quiver with relations: 
\begin{align*}
\begin{aligned}
\begin{tikzpicture} [bend angle=15, looseness=1]
\node (C1) at (0,0)  {$0$};
\node (C2) at (-2,-2)  {$1$};
\node (C3) at (0, -2){$2$};
\node (C4) at (2,-2) {$3$};
\draw [->,bend left] (C1) to node[gap]  {\scriptsize{$a$}} (C2);
\draw [->,bend left] (C2) to node[gap]  {\scriptsize{$a^*$}} (C1);
\draw [->,bend left] (C1) to node[gap]  {\scriptsize{$c$}} (C3);
\draw [->,bend left] (C3) to node[gap]  {\scriptsize{$c^*$}} (C1);
\draw [->,bend left] (C3) to node[gap]  {\scriptsize{$\alpha^*$}} (C2);
\draw [->,bend left] (C2) to node[gap]  {\scriptsize{$\alpha$}} (C3);
\draw [->,bend left] (C1) to node[gap]  {\scriptsize{$e$}} (C4);
\draw [->,bend left] (C4) to node[gap]  {\scriptsize{$e^*$}} (C1);
\draw [->,bend left] (C4) to node[gap]  {\scriptsize{$\beta$}} (C3);
\draw [->,bend left=45] (C4) to node[gap]  {\scriptsize{$\gamma^*$}} (C2);
\draw [->,bend left] (C3) to node[gap]  {\scriptsize{$\beta^*$}} (C4);
\draw [->,bend right=30] (C2) to node[gap]  {\scriptsize{$\gamma$}} (C4);
\end{tikzpicture}
\end{aligned}
\quad
\begin{aligned}
&a^*a=c^*c=e^*e \\
&\alpha^* \alpha =\gamma^* \gamma = a a^* a a^*, \\
&\alpha \alpha^* = \beta \beta^*=cc^*cc^*, \\
&\gamma \gamma^*=\beta^*\beta = ee^*ee^* \\
& aa^*a=\alpha^*c+\gamma^* e, \\
& cc^*c = \alpha a - \beta e, \\
& ee^*e = \gamma a - \beta^* c.
\end{aligned}
&
\qquad
\begin{aligned}
\begin{tikzpicture}   
\draw[red,thick] (0,-0.35) to [bend right=0] node[pos=0.48, right] {} (0,0.8);
\draw[red, thick] (-1,0.3) to [bend left=25] node[pos=0.48, right] {} (0.3,-0.3);
\draw[red,thick] (1,0.3) to [bend right=25] node[pos=0.48, right] {} (-0.3,-0.3);
\draw[black] (0,0) ellipse (1.5 and 1);
\draw[->] (0,-1.1) -- (0,-1.5);
\draw[black] (0,-2) ellipse (0.9 and 0.4);
\filldraw [black] (0,-2) circle (1pt);
\filldraw [blue] (0,0) circle (3pt);
\end{tikzpicture} 
\end{aligned}
\end{align*}
The morphism $g_{C}\colon Y=\mathcal{M}(E) \rightarrow Y^\prime=\mathcal{M}(E_{C})$ is surjective, and it contracts the central (-3)-curve of the exceptional divisor in the minimal resolution. \end{example}

 \section{Cornering noncommutative crepant resolutions} \label{Section:NCCR}
 In this section we recall Van den Bergh's notion of an NCCR~\cite{VdB04} and show that the moduli spaces determined by $0$-generated stability parameters are multigraded linear series. We establish a set of sufficient conditions for the cornering morphisms to be surjective, and we demonstrate that these conditions hold in a range of situations. Throughout this section, we assume that $R$ is a normal, Gorenstein $\kk$-algebra of Krull dimension at most three. 

 Recall that a $\kk$-algebra $A$ is a \emph{noncommutative crepant resolution} (NCCR) \emph{of $R$} if there exists a reflexive $R$-module $M$ such that $A:=\End_R(M)$ is Cohen--Macaulay as an $R$-module and is of finite global dimension. Choose a decomposition $M \cong \bigoplus_i M_i$ into finitely many indecomposable, reflexive $R$-modules;  in general this decomposition is non-unique. Given one such decomposition $A= \End_R(\bigoplus_{i}M_i)$, we obtain a presentation as a quotient $A\cong \kk Q/I$ exactly as for the geometric setting described following equation \eqref{eqn:quiveralgebra}. We impose the following additional standing assumption on $A=\End_R(M)$:
 
\begin{assumption}
 \label{ass:KrullSchmidt}
 \begin{enumerate}
 \item[\one] All indecomposable projective $A$-modules occur as summands of $A$, and the presentation $A=\kk Q/I$ corresponds to a unique decomposition of $M= \bigoplus_{i \in Q_0} M_i$ into non-isomorphic, indecomposable reflexive modules; and
 \item[\two] the ideal $I\subset \kk Q$ is generated by linear combinations of paths of length at least one.
 \end{enumerate}
\end{assumption}
 
 \begin{remarks}
  \label{rems:idealidempotent}
 \begin{enumerate}
 \item If the module category of $R$ has the Krull-Schmidt property, then any NCCR $A'$ is Morita equivalent to an NCCR $A$ satisfying Assumption \ref{ass:KrullSchmidt}\one.
\item Assumption~\ref{ass:KrullSchmidt}\two\ ensures that each vertex $i\in Q_0$ determines a vertex simple $A$-module $S_i=\kk e_i$, and that distinct summands of $M$ are non-isomorphic (as $M_i\cong M_j$ would force the relations $aa^\prime-e_i, a^\prime a-e_j\in I$ for some $a, a^\prime \in Q_1$).
\end{enumerate}
\end{remarks}
    
  Define the dimension vector $\vv = (v_i)\in \ZZ^{Q_0}$ by setting $v_i=\rk_R(M_i)$ for $0\leq i\leq n$. After replacing $A$ by a Morita equivalent algebra if necessary, Van den Bergh~\cite[Section 6.3]{VdB04} notes that we may assume that $\vv$ is indivisible. For any generic stability parameter $\theta\in \Theta_{\vv}$, the tautological bundle $T=\bigoplus_{0\leq i\leq n}T_i$ on the moduli space $\cM(A,\vv,\theta)$ is a left $A$-module, so $T^\vee$ is a right $A$-module. When $A$ satisfies Assumption~\ref{ass:KrullSchmidt}, \cite[Remark~6.6]{VdB04} observes that the approach of Bridgeland--King--Reid~\cite{BKR01} implies that $\cM(A,\vv,\theta)$ is connected; we provide a slightly simplified proof of this observation in Proposition~\ref{prop:connected}.  
  
   Applying \cite[Theorem~6.3.1, Remark~6.6.1]{VdB04} gives a morphism 
 \begin{equation}
 \label{eqn:crepresolution}
 f\colon Y:= \mathcal{M}(A,\vv,\theta)\longrightarrow \Spec R 
 \end{equation}
 that is a projective crepant resolution. Again, \cite[Proposition~2.6]{BuchweitzHille13} implies that the bundle $T^\vee=\bigoplus_{0\leq i\leq n} T^\vee_i$ dual to the tautological bundle gives a derived equivalence 
 \begin{equation}
 \label{eqn:derivedequivPhi}
 \Phi(-):= \mathbf{R}\!\Hom_Y(T^\vee,-)\colon D^b(Y)\longrightarrow D^b(A)
 \end{equation}
 with quasi-inverse
 \begin{equation}
 \label{eqn:derivedequivalenceadjoint}
 \Psi(-):= T^\vee\otimes_{A}- \colon D^b(A)\longrightarrow D^b(Y).
 \end{equation}
 To clarify our conventions, $\Phi(T^\vee) = \End(T^\vee)$ is a right $\End(T^\vee)$-module and hence a left module over $\End(T^\vee)^{\text{op}}\cong \End(T) \cong A$. Under the equivalences \eqref{eqn:derivedequivPhi} and \eqref{eqn:derivedequivalenceadjoint}, the full subcategory $D_c^b(Y)$ in $D^b(Y)$ of objects with proper support is equivalent to the full subcategory $\Dfinb(A)$ in $D^b(A)$ of finite-dimensional left $A$-modules (see \cite[Lemma~7.1.1]{BayerCrawZhang16}).
 
 In order to apply the ideas of Sections \ref{Section:MultigradedLinearSeries}--\ref{Section:Thecorneringcategoryandrecollement} to an NCCR $A:=\End_R(M) \cong \kk Q/I$ satisfying Assumption \ref{ass:KrullSchmidt}, we suppose in addition that $M_0=R$. 
 Since $A\cong \End(T)$, the multigraded linear series of the tautological bundle $T$ is $\mathcal{M}(T):= \mathcal{M}(A,\vv,\theta)$ for any $0$-generated stability condition $\theta \in \Theta_{\vv}^{+}$, and hence  
 \begin{equation}
 \label{eqn:fcrepres}
 f\colon Y=\mathcal{M}(T)\longrightarrow \Spec R 
\end{equation}
 is a projective, crepant resolution. 
 
 \begin{remark}
 This construction picks out one crepant resolution of $\Spec R$. However:
 \begin{enumerate}
 \item if more than one summand $M_i$ of $M$ has rank one, then by relabelling $i$ to be the zero vertex and repeating the construction, the tautological bundle $T^\prime$ on the resulting moduli space determines a projective crepant resolution $f^\prime\colon \mathcal{M}(T^\prime)\to \Spec R$ that need not be isomorphic to that from \eqref{eqn:fcrepres}, e.g. see the suspended pinch point example considered in \cite[Example B.8.7]{Bocklandt16} where the choice of zero vertex as the vertex currently labelled 1 or 2 yields two crepant resolutions that are not isomorphic as varieties;
 \item for a finite subgroup $G\subset \SO(3)$, the skew group algebra $A=\kk[x,y,z]\rtimes G$ is an NCCR of $R:=\kk[x,y,z]^G$. Nolla de Celis--Sekiya~\cite{NollaSekiya11} show that every projective crepant resolution of $\Spec R$ is of the form $\mathcal{M}(A^\prime, \vv^\prime, \theta^\prime)$, where $A^\prime$ is an algebra obtained from $A$ by a sequence of mutations at vertices (none of which is the zero vertex), where $\vv$ is a dimension vector and where $\theta^\prime$ is a $0$-generated stability condition. In particular, since the endomorphism algebra of the tautological bundle on $\mathcal{M}(A^\prime, \vv^\prime, \theta^\prime)$ is isomorphic to $A^\prime$, the multigraded linear series of the tautological bundle is isomorphic to $\mathcal{M}(A^\prime, \vv^\prime, \theta^\prime)$ and every projective crepant resolution of $\Spec R$ can be constructed as such a multigraded linear series; and
 \item if $R$ is a complete local Gorenstein algebra over the field $\mathbb{C}$ such that $\Spec R$ admits a projective crepant resolution whose fibres have dimension at most one, then as in (2) above, work of Wemyss~\cite{Wemyss14} implies that every projective crepant resolution of $\Spec R$ is isomorphic to a multigraded linear series of a tautological bundle.
 \end{enumerate}
 \end{remark}
 
A choice of cornering subset $\{0 \} \subset C \subset Q_0$ produces the corresponding cornering morphism 
\[
g_C\colon Y:=\mathcal{M}(T) \longrightarrow Y':=\mathcal{M}(T_C)
\]
 whose image is isomorphic to that of the morphism $\varphi_{\vert L\vert} \colon Y\to \vert L \vert$ for $L:= \bigotimes_{i\in C} \det(T_i)$. Each closed point $x \in Y'$  determines a $0$-generated $A_C$-module $N_x$ of dimension $\vv_C$. To state the next result, we recall in the appendix that $\Knumc(Y)$ is the numerical Grothendieck for compact support introduced in \cite{BayerCrawZhang16}.

\begin{theorem}
\label{thm:recollement2}
 Let $A=\End(\bigoplus_{i\in Q_0} M_i)$ be an \emph{NCCR} satisfying Assumption~\ref{ass:KrullSchmidt} with $M_0\cong R$. Suppose that for all $x \in Y'$,  there exist $a_i \in \mathbb{Z}_{>0}$ and sheaves $F_i$ with proper support such that 
 \[
[\Psi(j_!(N_x))]= \sum_{i=0}^m a_i [F_i]\in \Knumc(Y).
 \]
 Then the morphism $g_C\colon Y \rightarrow Y'$ is surjective.
\end{theorem}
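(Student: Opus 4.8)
The plan is to follow the template of the proof of Theorem~\ref{thm:surjectioncontraction}, but to replace the surface-specific dimension count with the positivity hypothesis in the threefold setting. Fix a closed point $x\in Y'$ and set $\mathcal{E}:=\Psi(j_!(N_x))\in D^b(Y)$. By Lemma~\ref{lem:recollementProperties}\two\ the module $j_!(N_x)$ is finite-dimensional, so \cite[Lemma~7.1.1, Proposition~7.2.1]{BayerCrawZhang16} guarantee that $\mathcal{E}$ has proper support. The key structural observation I would use is that $\Psi(-)=T^\vee\otimes^{\mathbf L}_A-$ is right $t$-exact, so $\mathcal{E}$ has cohomology only in nonpositive degrees, with top cohomology sheaf $\mathcal{H}^0(\mathcal{E})\cong T^\vee\otimes_A j_!(N_x)$. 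Truncating then gives a natural isomorphism $\Hom_{D^b(Y)}(\mathcal{E},\mathcal{O}_y)\cong\Hom_Y(\mathcal{H}^0(\mathcal{E}),\mathcal{O}_y)$, which is nonzero for every $y$ in the support of $\mathcal{H}^0(\mathcal{E})$. Combining this with the translation $\Hom_A(j_!(N_x),M_y)=\Hom_{D^b(Y)}(\mathcal{E},\mathcal{O}_y)$ across the equivalence $\Phi$ (where $M_y=\Phi(\mathcal{O}_y)$ ranges over all $0$-generated $A$-modules of dimension vector $\vv$), Proposition~\ref{prop:submoduleSurjection} and Lemma~\ref{lem:nonzerohoms}\one\ reduce the whole theorem to a single assertion: for every $x\in Y'$ one has $\mathcal{H}^0(\mathcal{E})\neq 0$.

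To establish $\mathcal{H}^0(\mathcal{E})\neq 0$ I would argue by contradiction, supposing $\mathcal{H}^0(\mathcal{E})=0$, so that $\mathcal{E}$ is concentrated in degrees $\le -1$. In this situation $\Hom^0_{D^b(Y)}(\mathcal{E},\mathcal{O}_y)=0$ for all $y$ automatically, and — exactly as in the proof of Theorem~\ref{thm:surjectioncontraction} — Lemma~\ref{lem:nonzerohoms}\two\ together with Serre duality for objects of compact support on the smooth threefold $Y$ yields $\Hom^3_{D^b(Y)}(\mathcal{E},\mathcal{O}_y)=0$ for all $y$ as well. Bridgeland--Maciocia~\cite[Proposition~5.4, Corollary~5.5]{BridgelandMaciocia02} then bound the homological dimension of $\mathcal{E}$ by $1$, and hence the codimension of $\supp\mathcal{E}$ by $1$. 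Since $\mathcal{E}$ has proper support while $Y$ is noncompact, codimension $0$ (support all of $Y$) is excluded, so the support of $\mathcal{E}$ has codimension exactly one. This codimension-one case is precisely the phenomenon that cannot occur on a surface, and ruling it out is where the positivity hypothesis becomes indispensable.

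The main obstacle is this last step: contradicting the existence of a codimension-one component $S$ of $\supp\mathcal{E}$. Here the plan is to compare the two expressions
\[
[\mathcal{E}]=\sum_{k\le -1}(-1)^k[\mathcal{H}^k(\mathcal{E})]=\sum_{i=0}^m a_i[F_i]\in\Knumc(Y)
\]
along $S$. On the right-hand side each $F_i$ is a genuine sheaf and each $a_i>0$, so the multiplicity of $S$ in the cycle class $[\mathcal{E}]$ is strictly positive. On the left-hand side I expect the constraints that $\mathcal{E}$ lives in degrees $\le -1$ and that $\Hom^3(\mathcal{E},\mathcal{O}_y)$ vanishes at a general closed point of $S$ to pin the relevant cohomology into a degree of odd parity, forcing the same multiplicity to be nonpositive — the desired contradiction. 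Making this sign computation rigorous is the technical heart of the argument: localising at the generic point of $S$ controls the multiplicity as an alternating length, and I anticipate using the perfect pairing between $K(Y)$ and $\Knumc(Y)$ from Lemma~\ref{lem:KnumcY} against an $f$-ample line bundle on $Y$ to detect the top-dimensional part of $[\mathcal{E}]$ and fix its sign. Once the codimension-one case is eliminated we conclude $\mathcal{H}^0(\mathcal{E})\neq 0$, which by the reduction of the first paragraph produces the required surjection and shows that $g_C$ is surjective.
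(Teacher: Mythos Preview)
Your overall strategy matches the paper's --- argue by contradiction, invoke the Bridgeland--Maciocia bound, and then contradict the positivity hypothesis --- and your reduction in the first paragraph to the single assertion $\mathcal{H}^0(\mathcal{E})\neq 0$ via right $t$-exactness of $\Psi$ is a clean reformulation that the paper does not make explicit. The gap is in your third paragraph, where you leave the ``technical heart'' as an expectation rather than an argument.

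What you are missing is that Bridgeland--Maciocia~\cite[Proposition~5.4]{BridgelandMaciocia02} gives more than a numerical bound on homological dimension: once $\Hom^j(\mathcal{E},\mathcal{O}_y)=0$ unless $1\le j\le 2$, the object $\mathcal{E}$ is quasi-isomorphic to a two-term complex $L^{-2}\xrightarrow{g} L^{-1}$ of \emph{locally free} sheaves in the indicated degrees. Then $\mathcal{H}^{-2}(\mathcal{E})=\ker(g)$ is a subsheaf of the locally free sheaf $L^{-2}$, hence torsion-free; but it also has proper support (because $\mathcal{E}$ does), so it is zero. Thus $\mathcal{E}\cong F[1]$ for the single sheaf $F=\cok(g)$, and $[\mathcal{E}]=-[F]\in\Knumc(Y)$. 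This one observation replaces your entire ``odd parity'' heuristic and multiplicity-along-$S$ computation: there is only one cohomology sheaf, sitting in degree $-1$, and the sign of $[\mathcal{E}]$ is unambiguous.

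Your proposed endgame also has a problem: pairing against an $f$-ample line bundle on $Y$ cannot force positivity of Euler characteristics, because $Y$ is not projective (e.g.\ $\chi$ of a compactly supported sheaf twisted by an $f$-ample bundle can be negative). The paper instead pushes forward along a smooth projective completion $j\colon Y\hookrightarrow\overline{Y}$ (using \cite[Lemma~5.1.1]{BayerCrawZhang16}) and twists by a sufficiently ample bundle $L$ on $\overline{Y}$; Serre vanishing then makes both $\chi(j_*F\otimes L)$ and $\sum_i a_i\,\chi(j_*F_i\otimes L)$ strictly positive, contradicting $\sum_i a_i[j_*F_i]=-[j_*F]$ in $\Knum(\overline{Y})$.
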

\begin{proof}
To simplify notation, write $\mathcal{E}_x:=\Psi(j_!(N_x))$. If we suppose that the result is false, then Proposition~\ref{prop:submoduleSurjection} and Lemma~\ref{lem:nonzerohoms} give $x\in Y^\prime$ such that 
\begin{equation}
\label{eqn:HomNCCR}
0=\Hom_{A}(j_!(N_x),M)=\Hom_{A}(M,j_!(N_x))
\end{equation}
for every $0$-generated $A$-module $M$ of dimension vector $\vv$. Every such module is of the form $M=\Phi(\mathcal{O}_y)$ for some closed point $y \in Y \cong \mathcal{M}(T)$. Apply $\Psi(-)$ to \eqref{eqn:HomNCCR} to obtain 
\[
0=\Hom_{D^b(Y)}(\mathcal{E}_x,\mathcal{O}_y)=\Hom_{D^b(Y)}(\mathcal{O}_y,\mathcal{E}_x)
\]
 for every closed point $y \in Y$.  Serre duality gives $\Hom^0_{D^b(Y)}(\mathcal{E}_x,\mathcal{O}_y)=\Hom^3_{D^b(Y)}(\mathcal{E}_x,\mathcal{O}_y)=0$, so $\Hom^j_{D^b(Y)}(\mathcal{E}_x,\mathcal{O}_y)=0$ unless $1 \le j \le 2$. Bridgeland--Maciocia~\cite[Proposition~5.4]{BridgelandMaciocia02} implies that $\mathcal{E}_x$ has homological dimension at most one, so $\mathcal{E}_x$ is quasi-isomorphic to a complex 
 \[
 L^{-2}\stackrel{g}{\longrightarrow} L^{-1},
 \]
 where $L^{-i}$ is a vector bundle in degree $-i$. Since $\mathcal{E}_x\in D_c^b(Y)$, we have that $H^{-1}(\mathcal{E}_x)\cong \ker(g)$ is a torsion subsheaf of $L^{-2}$ and hence is zero. It follows that $\mathcal{E}_x \cong F[1]$ where $F$ is the cokernel of $g$. In particular, $\mathcal{E}_x$ is quasi-isomorphic to the shift of a sheaf and $[\mathcal{E}_x]=-[F] \in \Knumc(Y)$. 
 
 By assumption, the class $[\mathcal{E}_x] \in \Knumc (Y)$ equals a positive sum of classes of sheaves, so
\[
\sum a_i [F_i] =[\mathcal{E}_x] = -[F]\in \Knumc(Y),
\]
where each $F_i$ is a sheaf on $Y$ and $a_i \in \mathbb{Z}_{>0}$. Let $j\colon Y\to\overline{Y}$ be a smooth projective completion. By \cite[Proof of Lemma 5.1.1]{BayerCrawZhang16}, we may pushforward numerical classes with compact support to obtain $\sum a_i[j_*F_i] = -[j_*F] \in \Knum(\overline{Y})$. However, for any sufficiently ample bundle $L$ on $\overline{Y}$, the integers $\sum a_i\chi(j_*F_i\otimes L)$ and  $\chi(j_*F\otimes L)$ are both positive, a contradiction.
\end{proof}

\begin{remark}
\label{rem:connectedness}
 This proof is adapted from that of Proposition~\ref{prop:connected} which in turn is adapted from the connectedness result of Bridgeland--King--Reid~\cite[Section~8]{BKR01}; our use of the numerical Grothendieck group enables us to bypass \cite[Lemma~8.1]{BKR01}.
 \end{remark}

This is particularly applicable when the vertex simple $A$-modules $S_i$ for $i \in Q_0 \backslash C$ are well understood and when the dimension of $j_!(N_x)$ can be calculated.

\begin{corollary} 
\label{cor:surjective}
  Let $A=\End(\bigoplus_{i\in Q_0} M_i)$ be an \emph{NCCR} satisfying Assumption~\ref{ass:KrullSchmidt} with $M_0\cong R$. Suppose in addition that either:
\begin{enumerate}
\item[\one] $\dim_i j_!(N_x) = v_i$ for all $i \in Q_0\setminus C$ and all $x \in Y'$; or 
\item[\two] $\Psi(S_i)$ is a sheaf for all $i \in Q_0\setminus C$ and $\dim_i j_!(N_x)  \ge v_i$ for all $i\in Q_0\setminus C$ and all $x \in Y'$.
\end{enumerate}
Then Theorem~\ref{thm:recollement2} applies, so $g_C\colon Y \rightarrow Y'$ is surjective.
\end{corollary}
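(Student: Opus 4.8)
The plan is to translate the whole question into the lattice of dimension vectors and read off the class $[\Psi(j_!(N_x))]$ directly from $\dim j_!(N_x)$, using the fact (recalled in the appendix, following \cite{BayerCrawZhang16}) that $\Psi$ identifies the lattice of dimension vectors for $A$ with $\Knumc(Y)$. Under this identification the class $[S_i]=e_i$ of the vertex simple maps to $[\Psi(S_i)]$, while the dimension vector $\vv$ maps to $[\cO_y]$: indeed every $0$-generated $A$-module of dimension vector $\vv$ is of the form $\Phi(\cO_y)$ for some closed point $y\in Y$, exactly as used in the proof of Theorem~\ref{thm:recollement2}, and since $Y$ is connected the class $[\cO_y]\in\Knumc(Y)$ is independent of $y$. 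Consequently, writing $\mathbf{d}=(d_i)$ for the dimension vector of $j_!(N_x)$ and using that $\Psi$ is exact on Grothendieck groups, I would record
\[
[\Psi(j_!(N_x))]=\sum_{i\in Q_0} d_i\,[\Psi(S_i)]\in\Knumc(Y).
\]

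For part \one, Lemma~\ref{lem:recollementProperties}\two\ already gives $d_i=v_i$ for $i\in C$, and the hypothesis supplies $d_i=v_i$ for $i\in Q_0\setminus C$, so $\mathbf{d}=\vv$ and the displayed class equals $[\cO_y]$. This is the class of a single sheaf, so Theorem~\ref{thm:recollement2} applies with $m=0$, $a_0=1$ and $F_0=\cO_y$, whence $g_C$ is surjective. For part \two, I would instead decompose
\[
\mathbf{d}=\vv+\sum_{i\in Q_0\setminus C}(d_i-v_i)\,e_i,
\]
which is legitimate because Lemma~\ref{lem:recollementProperties}\two\ forces $d_i=v_i$ for $i\in C$, while the hypothesis $d_i\ge v_i$ for $i\in Q_0\setminus C$ makes every coefficient $d_i-v_i$ a non-negative integer. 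Translating under $\Psi$ gives
\[
[\Psi(j_!(N_x))]=[\cO_y]+\sum_{i\in Q_0\setminus C}(d_i-v_i)\,[\Psi(S_i)].
\]
Since each $\Psi(S_i)$ with $i\in Q_0\setminus C$ is a sheaf by hypothesis and $\cO_y$ is a sheaf, discarding the terms with vanishing coefficient expresses $[\Psi(j_!(N_x))]$ as a combination of classes of sheaves with strictly positive integer coefficients (that of $[\cO_y]$ being $1$), so Theorem~\ref{thm:recollement2} again yields surjectivity of $g_C$.

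The only point requiring genuine care is the identification $\vv\leftrightarrow[\cO_y]$ together with the passage from dimension vectors to $\Knumc(Y)$; this is the content of the machinery recalled in the appendix, and once it is in hand both cases reduce to bookkeeping. A secondary subtlety is the strict positivity demanded by Theorem~\ref{thm:recollement2}, which I would dispatch simply by dropping the zero coefficients while retaining $[\cO_y]$, thereby guaranteeing at least one strictly positive term in each case.
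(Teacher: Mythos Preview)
Your proposal is correct and follows essentially the same approach as the paper: both identify $[\Psi(j_!(N_x))]$ via the dimension vector of $j_!(N_x)$ using Lemma~\ref{lem:KnumANCCR}, then write it as $[\cO_y]$ in case~\one\ and as $[\cO_y]+\sum_{i\in Q_0\setminus C}(d_i-v_i)[\Psi(S_i)]$ in case~\two, and invoke Theorem~\ref{thm:recollement2}. Your explicit handling of the strict positivity requirement (by discarding zero coefficients while retaining $[\cO_y]$) is a small clarification the paper leaves implicit.
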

\begin{proof}
 By Lemma~\ref{lem:KnumANCCR}, the numerical class of a finite dimensional $A$-module $M$ is determined by its dimension vector $[M] =\sum_{i\in Q_0} \dim M_i [S_i]$. The skyscraper sheaf for each closed point $y\in Y$ satisfies $\mathcal{O}_y=\Psi(M_y)$ for some $0$-generated $A$-module $M_y$ of dimension vector $\vv$, so
 \[
 [\mathcal{O}_y] = \sum_{i\in Q_0} v_i [\Psi(S_i)].
 \]
 To apply Theorem~\ref{thm:recollement2}, we express each class $[\mathcal{E}_x]$ as a positive sum of classes of sheaves with proper support. Indeed, in case \one, Lemma~\ref{lem:recollementProperties}\two\ gives $\dim_i j_!(N_x) = v_i$ for all $i \in C$ and all $x \in Y'$, so the assumption in case \one\ gives $[j_!(N_x)] = \vv \in \Knum(A)$ and hence 
 \[
 [\mathcal{E}_x]= [\Psi(j_!(N_x))] = [\Psi(\vv)] = [\mathcal{O}_y]
 \]
 for each point $y\in Y$. Similarly, in case \two, write $w_i:= \dim_i j_!(N_x)-v_i$ for $i\in Q_0$. Note that $w_i=0$ for $i\in C$ by Lemma~\ref{lem:recollementProperties}\two, and $w_i\geq 0$ for $i\in Q_0\setminus C$ by assumption. Then for all $x\in Y^\prime$, we have that $[j_!(N_x)] = \sum_{i\in Q_0} (v_i+w_i)[\Psi(S_i)]$, so 
 \[
 [\mathcal{E}_x]= [\Psi(j_!(N_x))] =  \sum_{i\in Q_0} (v_i+w_i) [\Psi(S_i)] = [\mathcal{O}_y] +  \sum_{i\in Q_0\setminus C} w_i [\Psi(S_i)]
 \]
 for each point $y\in Y$. In either case, the result is immediate from Theorem~\ref{thm:recollement2}.
  \end{proof}

\begin{remark}
In fact, the proof of Corollary~\ref{cor:surjective}\two\ shows that one requires only that the class $[\Psi(S_i)]\in \Knumc(Y)$ is a non-negative combination of classes of sheaves for each $i\in Q_0\setminus C$. 
\end{remark}

\section{The toric case in dimension three}
 We now specialise to the case where $R$ is a Gorenstein, semigroup algebra of dimension three, so $X=\Spec R$ is a Gorenstein toric threefold. Ishii--Ueda~\cite{IshiiUeda09} and Broomhead~\cite{Broomhead12} show that $R$ admits a noncommutative crepant resolution $A=\End_R(\bigoplus_{i\in Q_0} M_i)$ obtained as the Jacobian algebra $\kk Q/I$ of a quiver with potential arising from a consistent dimer model on a real two-torus. Toric algebras of this form necessarily satisfy Assumption~\ref{ass:KrullSchmidt}; in fact, the conclusions of Lemma~\ref{lem:KnumANCCR} were noted first by Ishii--Ueda~\cite[Proposition~8.3]{IshiiUeda13} in this context.
 
 For the dimension vector $\vv\in \Knum(A)$ with $v_i=1$ for all $i\in Q_0$, choose once and for all a vertex $0\in Q_0$ and a stability parameter $\theta\in \Theta_{\vv}^+$. If we write $T=\bigoplus_{i\in Q_0} T_i$ for the tautological bundle on the fine moduli space $\mathcal{M}(A,\vv,\theta)$, then as in equations \eqref{eqn:derivedequivalenceadjoint}-\eqref{eqn:fcrepres}, there is a projective crepant resolution  
 \begin{equation}
 \label{eqn:toriccrepresolution}
 f\colon Y:=\mathcal{M}(T)\longrightarrow X=\Spec R, 
 \end{equation}
 and the dual bundle $T^\vee\cong \bigoplus_{i\in Q_0} T_i^\vee$ determines a derived equivalence 
  \begin{equation}
 \label{eqn:toricderivedequivalenceadjoint}
 \Psi(-):= T^\vee\otimes_{A} - \colon D^b(A)\longrightarrow D^b(Y)
 \end{equation}
 which satisfies $\Psi(P_i) = T^\vee\otimes_{A} Ae_i\cong T_i^{-1}$ for $i\in Q_0$.

 Now that we have a multigraded linear series, we can consider the effect of cornering. For any cornering subset $C \subset Q_0$ containing $\{0\}$, Proposition~\ref{prop:functor} gives a commutative diagram
\begin{equation}
\label{eqn:morphismoverSpecR}
\xymatrix{
 Y=\mathcal{M}(T)\ar[dr]\ar[rr]^{g_C} & & \mathcal{M}(T_C)\ar[dl]\\
  & X & 
 }
\end{equation}
 where the image of $g_C$ is determined by $\bigotimes_{i\in C} \det(T_i)$. We now describe the image of $g_C$ explicitly following Craw--Quintero V\'{e}lez~\cite{CrawQuinterovelez12}. Consider the collection
 \[
 \mathscr{E}:= \{M_i \mid i\in C\}
 \]
 of rank one reflexive sheaves on $X=\Spec R$. The \emph{quiver of sections} of $\mathscr{E}$ is the quiver $Q^\prime$ with vertex set $C$, and where the arrows from vertex $i$ to vertex $j$ correspond to those torus-invariant sections in $\Hom_{R}(M_i,M_j)$ that do not factor through $M_k$ for some $k\neq i, j$. Every arrow $a\in Q_1^\prime$ therefore determines a vector $\div_X(a)\in \ZZ^{d}$ in the lattice of torus-invariant divisors on $X$, and we extend this to a $\ZZ$-linear map $\div_X\colon \ZZ^{Q_1^\prime}\to \ZZ^d$ by sending the characteristic function for $a\in Q_1^\prime$ to $\div_X(a)$. Combine this with the incidence map $\inc$ of $Q^\prime$ to obtain a $\ZZ$-linear map 
 \[
 \pi_X:=(\inc,\div_X)\colon \ZZ^{Q_1^\prime}\longrightarrow \ZZ^{C}\oplus \ZZ^d.
 \]
 The semigroup algebra $\kk[\mathbb{N}^{Q_1^\prime}]$ is the coordinate ring of the space $\mathbb{A}_{\kk}^{Q_1^\prime}$ of representations of dimension vector $(1,\dots, 1)$, and the ideal $I_{\mathscr{E}} := (y^u-y^v\in \kk[\NN^{Q_1^\prime}]\mid u-v\in \Ker(\pi_X))$ cuts out an affine toric subvariety $\mathbb{V}(I_{\mathscr{E}})\subseteq \mathbb{A}^{Q_1^\prime}$ that is invariant under the action of the torus $G:= \Spec \kk[\ZZ^{C}]$ given by $(g_i)\cdot (w_a) = (g_{\head(a)}g_{\tail(a)}^{-1}w_a)$. For $\theta\in \Theta_{\vv_C}^+$, the GIT quotient 
 \[
 Y_\theta:= \mathbb{V}(I_\mathscr{E})\git_\theta T \subseteq \mathcal{M}(T_C)
 \]
 admits a projective birational morphism $\tau_\theta\colon Y_\theta\to X$ obtained by variation of GIT quotient \cite[Proposition~2.14]{CrawQuinterovelez12}. In fact we have the following result: 
 
\begin{proposition}
\label{prop:SekiyaTakahashi}
 Let $X$ be a Gorenstein affine toric threefold. For any $C\subseteq Q_0$ containing $\{0\}$, the image of $g_C$ is  the irreducible component $Y_\theta$ of $\mathcal{M}(T_C)$ that's birational to $X$.
\end{proposition}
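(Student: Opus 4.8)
The plan is to show that the image of $g_C$ coincides with $Y_\theta$ by restricting everything over the dense torus orbit of $X$ and then passing to closures. First I would record that $g_C$ is proper: the crepant resolution $f$ from \eqref{eqn:toriccrepresolution} is projective, hence proper, while the structure morphism $\mathcal{M}(T_C)\to X$ appearing in \eqref{eqn:morphismoverSpecR} is separated, so the cancellation property for proper morphisms shows that $g_C$ is proper. In particular the image of $g_C$ is a closed, irreducible subscheme of $\mathcal{M}(T_C)$, and if $U:=f^{-1}(T_X)$ denotes the preimage of the open torus orbit $T_X\subseteq X$, then $U$ is dense in $Y$ and $\im(g_C)=\overline{g_C(U)}$.

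The key step is to compute $g_C(U)$ explicitly and to identify it with the dense torus orbit of $Y_\theta$. For a point $y\in U$, the fibre of the tautological family $T_C$ is an $A_C$-module supported at a smooth torus point, so every arrow $a\in Q_1^\prime$ of the quiver of sections acts by the nonzero value of the corresponding torus-invariant section, namely the character $\chi^{\div_X(a)}$. Consequently, for any $u-v\in\Ker(\pi_X)$ one has $\chi^{\div_X(u)}=\chi^{\div_X(v)}$, so the resulting $A_C$-module satisfies the binomial relations cutting out $\mathbb{V}(I_{\mathscr{E}})$; since all arrows act invertibly, this module in fact lies in the open torus orbit $T^\prime$ of the irreducible toric variety $\mathbb{V}(I_{\mathscr{E}})$, whence $g_C(U)\subseteq T^\prime\subseteq Y_\theta$. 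To upgrade this containment to an equality $g_C(U)=T^\prime$, I would use that $f\vert_U\colon U\to T_X$ is an isomorphism and that $\tau_\theta$ restricts to an isomorphism $T^\prime\to T_X$ (both $X$ and $Y_\theta$ having $T_X$ as their torus, with $\tau_\theta$ birational by \cite[Proposition~2.14]{CrawQuinterovelez12}); compatibility of these maps with the structure morphism over $X$ then forces $g_C$ to map $U$ bijectively onto $T^\prime$.

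Finally I would conclude by taking closures: since $Y_\theta=\mathbb{V}(I_{\mathscr{E}})\git_\theta T$ is the closure of its dense torus orbit $T^\prime$, we obtain
\[
\im(g_C)=\overline{g_C(U)}=\overline{T^\prime}=Y_\theta,
\]
which is the irreducible component of $\mathcal{M}(T_C)$ that is birational to $X$. I expect the main obstacle to be the middle step: one must check carefully, using the explicit quiver-of-sections description of $Y_\theta$ from \cite{CrawQuinterovelez12}, that over the torus the tautological $A_C$-modules satisfy not merely the relations of $A_C$ but the full lattice ideal $I_{\mathscr{E}}$, so that $g_C(U)$ genuinely lands in the toric component $\mathbb{V}(I_{\mathscr{E}})$ rather than in some other component of $\mathcal{M}(T_C)$ lying over $X$.
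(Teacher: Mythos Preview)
Your argument is a genuinely different route from the paper's. The paper never invokes properness or a torus-closure argument: instead it identifies $g_C$ with the morphism $\varphi_{\vert\mathscr{L}\vert}$ of \cite{CrawSmith08} for the collection $\mathscr{L}=\{T_i\mid i\in C\}$, uses \cite[Proposition~4.3]{CrawSmith08} to write the image as the explicit GIT quotient $\mathbb{V}(I_{\mathscr{L}})\git_\theta G$, and then compares the two lattice ideals $I_{\mathscr{L}}\subseteq I_{\mathscr{E}}$ to obtain $Y_\theta\subseteq\im(g_C)$, with equality because both sides are irreducible and contain the $(\kk^\times)^{Q_1'}$ orbit closures. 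Your approach trades this explicit toric-ideal bookkeeping for a cleaner properness-plus-density argument, which is attractive; the paper's approach, on the other hand, gives a concrete GIT description of the image rather than merely identifying it set-theoretically.

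There is, however, a genuine gap in your proposal. The proposition asserts two things: that $\im(g_C)=Y_\theta$, and that $Y_\theta$ is an \emph{irreducible component} of $\mathcal{M}(T_C)$. You prove only the first. Knowing that $\im(g_C)$ is closed and irreducible does not tell you it is a component; a priori $Y_\theta$ could sit inside a strictly larger irreducible piece of $\mathcal{M}(T_C)$. The paper establishes the component claim separately and first, by showing that the lattice $\Lambda$ spanned by the path-difference vectors $v(p^+)-v(p^-)$ for $p^+-p^-\in J_{\mathscr{E}}$ coincides with $\Ker(\pi_X)$ (this is \cite[Lemma~3.14]{CrawQuinterovelez12}), and then invoking \cite[Theorem~3.15]{CrawQuinterovelez12} to deduce that $Y_\theta$ is the unique irreducible component of $\mathcal{M}(T_C)$ containing the $G$-orbit closures of points of $(\kk^\times)^{Q_1'}$. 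You should either import this step, or supply an independent argument that the locus of $0$-generated $A_C$-modules over $T_X$ is already open in $\mathcal{M}(T_C)$ (not just in $Y_\theta$), so that $Y_\theta$ contains a nonempty open subset of the ambient moduli space. The concern you flag at the end---that tautological $A_C$-modules over torus points satisfy $I_{\mathscr{E}}$ and not merely the path relations $J_{\mathscr{E}}$---is not the obstacle: over $(\kk^\times)^{Q_1'}$ the binomial ideal and its lattice saturation cut out the same locus, so that step is fine.
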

\begin{proof}
 We prove first that $Y_\theta$ is an irreducible component of $\mathcal{M}(T_C)$. For each nontrivial path $p$ in $Q^\prime$, define the vector $v(p):=\sum_{a\in \supp(p)} \div(a)$ to be the sum of the vectors of each arrow in the path $p$. Then the ideal $J_{\mathscr{E}}:= (p^+-p^-\in \kk Q^\prime \mid p^{\pm}\text{ share the same tail, head and vector})$ satisfies $A_C\cong \kk Q^\prime/J_{\mathscr{E}}$ by \cite[Lemma~2.5]{CrawQuinterovelez12}. These paths in $Q^\prime$ determine a lattice
 \[
 \Lambda:= \big(v(p^+)-v(p^-)\in \ZZ^{Q_1^\prime} \mid p^+-p^-\in J_{\mathscr{E}}\big).
 \]
 The proof\footnote{In fact the proof in our context is slightly simpler: we have only one algebra $A_C$, so the sentence in line 13 of the proof of \cite[Lemma~3.14]{CrawQuinterovelez12} that transitions between two algebras is redundant.} of \cite[Lemma 3.14]{CrawQuinterovelez12} shows that $\Ker(\pi) = \Lambda$, and the proof of \cite[Theorem~3.15]{CrawQuinterovelez12} applies verbatim to give that $Y_\theta$ is the unique irreducible component of $\mathcal{M}(T_C)$ containing the $G$-orbit closures of points of $(\kk^\times)^{Q_1^\prime}$ in $\mathbb{A}_{\kk}^{Q_1^\prime}$.
 
  To see that $Y_\theta$ is the image of $g_C$, consider the collection $\mathscr{L}=\{T_i \mid i\in C\}$ of basepoint-free line bundles on $Y=\mathcal{M}(T)$. Following the construction of Craw--Smith~\cite{CrawSmith08}, the quiver of sections of $\mathscr{L}$ is $Q^\prime$, because we have algebra isomorphisms 
  \[
 \kk Q^\prime/J_{\mathscr{E}} \cong \End\left(\bigoplus_{i\in C} M_i\right) \cong A_C = e_C A e_C \cong e_C \End\left(\bigoplus_{i\in Q_0} T_i\right) e_C \cong \End\left(\bigoplus_{i\in C} T_i\right).
  \]
 In particular, each arrow $a$ in $Q^\prime$ determines a vector $\div_Y(a)\in \ZZ^{\Sigma(1)}$ in the lattice of torus-invariant divisors on the toric variety $Y$ with fan $\Sigma$, and we obtain a $\kk$-algebra homomorphism $\overline{\varphi}\colon \kk[\NN^{Q_1^\prime}]\to \kk[x_\rho \mid \rho\in \Sigma(1)]$ to the Cox ring of $Y$ by sending the variable $y_a$ to the monomial $x^{\div_Y(a)}$. Applying the $\Spec$ functor gives a morphism of affine varieties that is equivariant with respect to the actions of the algebraic tori $G$ and $\Spec \kk[\Cl(Y)]$; following \cite[Theorem~1.1]{CrawSmith08}, this morphism of affine schemes descends to give a morphism  
 \[
 \varphi_{\vert\mathscr{L}\vert} \colon Y\longrightarrow \mathcal{M}(T_C)=\mathcal{M}(A_C,\vv_C,\theta). 
 \]
 Since the tautological bundles $\mathscr{W}_i$ on $\mathcal{M}(T_C)$ satisfy $\varphi_{\vert \mathscr{L}\vert}^*(\mathscr{W}_i) = T_i$ for $i\in C$, it follows that $\varphi_{\vert \mathscr{L}\vert}$ coincides with the universal morphism $g_C$. To describe explicitly the image of $g_C$, combine the incidence map of $Q^\prime$ with the map $\div_Y\colon \ZZ^{Q_1^\prime}\to \ZZ^{\Sigma(1)}$ to obtain a $\ZZ$-linear map  
 \[
 \pi_Y:=(\inc,\div_Y)\colon \ZZ^{Q_1^\prime}\longrightarrow \ZZ^{C}\oplus \ZZ^{\Sigma(1)}
 \]
 and hence an ideal $I_{\mathscr{L}} := (y^u-y^v\in \kk[\NN^{Q_1^\prime}]\mid u-v\in \Ker(\pi_Y))$. Then \cite[Proposition~4.3]{CrawSmith08} shows that the image of $\varphi_{\vert \mathscr{L}\vert}$ is the GIT quotient $\mathbb{V}(I_{\mathscr{L}})\git_\theta G$. The $\ZZ$-linear map $\pi$ factors via $\pi_Y$, so $I_{\mathscr{L}}\subseteq I_{\mathscr{E}}$ and hence 
 \begin{equation}
 \label{eqn:GITquotients}
 Y_\theta= \mathbb{V}(I_{\mathscr{E}})\git_\theta G\subseteq \mathbb{V}(I_{\mathscr{L}})\git_\theta G = \im(\varphi_{\mathscr{L}}) = \im(g_C).
 \end{equation}
 Moreover, the GIT quotient $\mathbb{V}(I_{\mathscr{L}})\git_\theta G$ is irreducible and contains the $G$-orbit closures of points of $(\kk^\times)^{Q_1^\prime}$ in  $\mathbb{A}_{\kk}^{Q_1^\prime}$, so the inclusion from \eqref{eqn:GITquotients} is an equality. 
 \end{proof}

 The next example shows that $\mathcal{M}(T_C)$ need not be irreducible, or equivalently, the morphism $g_C$ need not be surjective in this context.

\begin{example}
\label{exa:1/3(1,1,1)}
For the cyclic subgroup $G\subset \SL(3,\CC)$ of type $\frac{1}{3}(1,1,1)$, an NCCR for the $G$-invariant ring $R:=\kk[x,y,z]^{G}$ can be presented as the McKay quiver with relations:
\begin{align*}
\begin{aligned}
\begin{tikzpicture}
\node (A) [draw=none,minimum size=4cm,regular polygon,regular polygon sides=3] at (0,0) {};
\node (M0) at (A.corner 1) {$0$};
\node (M1) at (A.corner 2) {$1$};
\node (M2) at (A.corner 3) {$2$};
\draw [->,bend left=-16] (M0) to node[gap] {$\scriptstyle{x_0}$} (M1);
\draw [->,bend left=-16] (M1) to node[gap] {$\scriptstyle{x_1}$} (M2);
\draw [->,bend left=-16] (M2) to node[gap] {$\scriptstyle{x_2}$} (M0);
\draw [->,bend left=0] (M0) to node[gap] {$\scriptstyle{y_0}$} (M1);
\draw [->,bend left=0] (M1) to node[gap] {$\scriptstyle{y_1}$} (M2);
\draw [->,bend left=0] (M2) to node[gap] {$\scriptstyle{y_2}$} (M0);
\draw [->,bend left=16] (M0) to node[gap] {$\scriptstyle{z_0}$} (M1);
\draw [->,bend left=16] (M1) to node[gap] {$\scriptstyle{z_1}$} (M2);
\draw [->,bend left=16] (M2) to node[gap] {$\scriptstyle{z_2}$} (M0);
\end{tikzpicture}
\end{aligned}
&\quad\quad
\begin{aligned}
&x_1y_0=y_1x_0, \quad &y_1z_0 =z_1y_0, \qquad &z_1x_0=x_1z_0, \\
&x_2y_1=y_2x_1, \quad &y_2z_1 =z_2y_1, \qquad &z_2x_1=x_2z_1, \\
&x_0y_2=y_0x_2, \quad &y_0z_2 =z_0y_2, \qquad &z_0x_2=x_0z_2. 
\end{aligned}
\end{align*}
The cornering subset $C=\{0,2\}$ corners away the vertex 1 to produce the algebra $A_{C}$, which can be presented as the quiver
\begin{align*}
\begin{tikzpicture}
\node (M0) at (6,0) {$2$};
\node (M1) at (0,0) {$0$};
\draw [->,bend left=8] (M0) to node[gap] {$\scriptstyle{x_2}$} (M1);
\draw [->,bend left=16] (M0) to node[gap] {$\scriptstyle{y_2}$} (M1);
\draw [->,bend left=25] (M0) to node[gap] {$\scriptstyle{z_2}$} (M1);
\draw [->,bend left=8] (M1) to node[gap] {$\scriptstyle{x_1x_0}$} (M0);
\draw [->,bend left=16] (M1) to node[gap] {$\scriptstyle{x_1y_0}$} (M0);
\draw [->,bend left=25] (M1) to node[gap] {$\scriptstyle{y_1y_0}$} (M0);
\draw [->,bend left=33] (M1) to node[gap] {$\scriptstyle{y_1z_0}$} (M0);
\draw [->,bend left=41] (M1) to node[gap] {$\scriptstyle{z_1z_0}$} (M0);
\draw [->,bend left=52] (M1) to node[gap] {$\scriptstyle{z_1x_0}$} (M0);
\end{tikzpicture}
\end{align*}
with relations inherited from $A$. Consider the $A_C$ representation $(N,\lambda)$ of dimension vector $(1,1)$ defined by 
\[
\begin{bmatrix}
\lambda_{x_2} & \lambda_{y_2} & \lambda_{z_2}
\end{bmatrix}
=
\begin{bmatrix}
0 & 0 & 0 
\end{bmatrix}
\text{ and }
\begin{bmatrix}
\lambda_{x_1x_0} & \lambda_{x_1 y_0} & \lambda_{x_1 z_0} \\
\lambda_{y_1x_0} & \lambda_{y_1 y_0} & \lambda_{y_1 z_0} \\
\lambda_{z_1x_0} & \lambda_{z_1 y_0} & \lambda_{z_1 z_0} \\
\end{bmatrix}=
\begin{bmatrix}
1 & 0 & 0 \\
0 & 1 & 0 \\
0 & 0 & 1 
\end{bmatrix}.
\]
Then $j_!(N)$ is an $A$ representation $(V,\rho)$ with dimension vector $(1,3,1)$ defined by 
\begin{align*}
\begin{bmatrix}
\rho_{x_2} & \rho_{y_2} & \rho_{z_2}
\end{bmatrix} = \begin{bmatrix}
0 & 0 & 0
\end{bmatrix}, \, 
\begin{bmatrix}
\rho_{x_0} & \rho_{y_0} & \rho_{z_0}
\end{bmatrix} = \begin{bmatrix}
1 & 0 & 0 \\
0 & 1 & 0 \\
0 & 0 & 1 
\end{bmatrix}, \,
\begin{bmatrix}
\rho_{x_1} \\ \rho_{y_1} \\ \rho_{z_1}
\end{bmatrix} = \begin{bmatrix}
1 & 0 & 0 \\
0 & 1 & 0 \\
0 & 0 & 1 
\end{bmatrix}.
\end{align*}
This representation does not have $S_1$ as a submodule, and hence the point in $\mathcal{M}(E_{C})$ corresponding to $N$ is not in the image of $g_{C}$ by Proposition \ref{prop:submoduleSurjection}.
\end{example}
 
 In order to complete the statement and proof of Proposition~\ref{prop:introTakahashi}, we require a generalisation to our context of a notion introduced by Takahashi~\cite{Takahashi11}:
 
 \begin{definition}
A vertex $i\in Q_0$ is \emph{essential} if there exists a $0$-generated $A$-module that has a submodule isomorphic to $S_i$. Let $E\subset Q_0$ denote the set of essential vertices.
\end{definition}
 
 \begin{remark}
 \label{rem:essentialWalls}
 The proof of Bocklandt--Craw--Quintero V\'{e}lez~\cite[Proposition~4.7]{BCQV} shows that a vertex $i\in Q_0$ is essential if and only if the $0$-generated GIT chamber in $\Theta_{\vv}$ has a wall contained in the hyperplane $S_i^\perp:=\{\theta\in \Theta_{\vv} \mid \theta(S_i)=0\}$. This observation is the key ingredient in the next result which completes the proof of Proposition~\ref{prop:introTakahashi}.
 \end{remark}
 
  \begin{proposition}
 \label{prop:inessentials}
 Let $X$ be a Gorenstein affine toric threefold. For the set $C:= \{0\}\cup E$, the morphism $g_C\colon Y\to \mathcal{M}(T_C)$ is an isomorphism onto the irreducible component $Y_\theta$ in $\mathcal{M}(T_C)$.
 \end{proposition}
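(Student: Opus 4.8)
The plan is to prove that the semiample line bundle $L_C:=\bigotimes_{i\in C}\det(T_i)=\bigotimes_{i\in E}\det(T_i)$ on $Y$ is in fact \emph{ample}. Granting this, Remark~\ref{rem:ample} shows that $g_C=\varphi_{\vert L_C\vert}$ is a closed immersion, and since its image is the irreducible component $Y_\theta$ by Proposition~\ref{prop:SekiyaTakahashi}, a closed immersion onto $Y_\theta$ is precisely an isomorphism $Y\xrightarrow{\sim}Y_\theta$. Each $\det(T_i)$ is globally generated by Corollary~\ref{cor:ggtautbundles}, hence nef, so $L_C$ is nef; by Kleiman's criterion it therefore suffices to show that $L_C\cdot\ell>0$ for every nonzero effective curve $\ell$, equivalently that $g_C$ contracts no curve.

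Suppose for contradiction that $g_C$ contracts an irreducible curve $\ell$. I would first identify $g_C$, via Theorem~\ref{thm:multigradedlinearseries} and Proposition~\ref{prop:SekiyaTakahashi}, with the morphism determined by the nef class $L_{\theta_C}=L_C$, where $\theta_C\in\overline{\Theta_{\vv}^+}$ is the full parameter with $\theta_{C,i}=1$ for $i\in E$ and $\theta_{C,i}=0$ for $i\in Q_0\setminus C$; note $\theta_{C,0}=-\sum_{i\in E}v_i<0$ by the $\vv^\perp$ condition. By the GIT construction of $Y$ this morphism contracts exactly the $L_{\theta_C}$-trivial curves, and the closed points of such a contracted $\ell$ parametrise mutually $\theta_C$-S-equivalent modules. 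For generic $y\in\ell$ the module $M_y$, being $\theta_{\mathrm{gen}}$-stable and therefore $0$-generated, must then be strictly $\theta_C$-semistable, so there is a proper nonzero submodule $N\subsetneq M_y$ with $\theta_C(N)=0$. As $M_y$ is $0$-generated with $\dim_0 M_y=1$, any submodule with $\dim_0 N=1$ contains the vertex-$0$ generator and so equals $M_y$; hence $\dim_0 N=0$, and then $\theta_C(N)=\sum_{i\in E}\dim_i N=0$ forces $\dim_i N=0$ for every essential vertex $i$ too. Thus $N\neq 0$ is supported on the inessential vertices $Q_0\setminus C$, and a simple submodule $S_i\hookrightarrow N\hookrightarrow M_y$ exhibits $S_i$ in the socle of the $0$-generated module $M_y$ for an \emph{inessential} vertex $i$, contradicting the definition of essential. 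Hence no curve is contracted, $L_C$ is ample, and $g_C$ is the desired isomorphism.

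The substantive step is the ampleness of $L_C$, and the point requiring the most care is the VGIT input in the middle paragraph: one must justify that $g_C$ is genuinely the contraction attached to the nef bundle $L_{\theta_C}$, so that a contracted $\ell$ produces a generically strictly $\theta_C$-semistable $0$-generated module with a slope-zero destabilising submodule. Everything then hinges on the sign data $\theta_{C,0}<0$ and $\theta_{C,i}=1>0$ for $i\in E$, which force that submodule to avoid the vertex $0$ and all essential vertices; the contradiction is then immediate from the socle characterisation of essential vertices underlying Remark~\ref{rem:essentialWalls}. Example~\ref{exa:1/3(1,1,1)} is consistent with this: there vertex $1$ is the unique inessential vertex, $C=\{0,2\}=\{0\}\cup E$, and the offending representation $N$ does not violate the argument because its image point lies on a different component of $\mathcal{M}(T_C)$ rather than on $Y_\theta$.
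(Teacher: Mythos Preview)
Your proof is correct and follows the same overall strategy as the paper: both arguments reduce to showing that the line bundle $L_{\theta_C}=\bigotimes_{i\in C}T_i$ on $Y$ is ample, and then conclude via Theorem~\ref{thm:multigradedlinearseries} (equivalently Remark~\ref{rem:ample}) together with Proposition~\ref{prop:SekiyaTakahashi}. The difference is in how ampleness is established. The paper observes that the parameter $\theta_C$ of \eqref{eqn:thetaC} lies in the closure of $\Theta_{\vv}^+$ and then invokes Remark~\ref{rem:essentialWalls} directly: since every wall of $\Theta_{\vv}^+$ is contained in some hyperplane $S_i^\perp$ with $i$ essential, and $\theta_C(S_i)=1$ for each essential $i$, the parameter $\theta_C$ avoids all walls, hence lies in the interior, and $L_{\theta_C}$ is the polarising ample bundle. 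Your route instead unpacks the module-theoretic content behind that remark via Kleiman's criterion and VGIT: assuming $L_{\theta_C}$ is not ample, you produce a $0$-generated module $M_y$ that is strictly $\theta_C$-semistable, and you analyse the destabilising submodule $N$ to force $\dim_0 N=0$ and $\dim_i N=0$ for every essential $i$, so some simple $S_i$ with $i$ inessential lies in the socle of $M_y$, contradicting the definition of essential. The paper's argument is shorter because it treats the wall classification from \cite{BCQV} as a black box; yours is more self-contained and, in effect, reproves exactly the direction of Remark~\ref{rem:essentialWalls} that is needed here. Your flagged VGIT step is the one genuine input beyond the paper's text, but it is standard: since $\theta_C\in\overline{\Theta_{\vv}^+}$, every $\theta_{\mathrm{gen}}$-stable module is $\theta_C$-semistable, and the morphism determined by $L_{\theta_C}$ is the variation-of-GIT map whose fibres are $\theta_C$-S-equivalence classes, so a curve with $L_{\theta_C}\cdot\ell=0$ does parametrise non-isomorphic S-equivalent, hence generically strictly $\theta_C$-semistable, $0$-generated modules as you claim.
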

\begin{proof}
The stability parameter $\theta_C:= (\theta_i)\in \Theta_{\vv}$ given by 
\begin{equation}
\label{eqn:thetaC}
\theta_i = \left\{\begin{array}{cl} 1 & i\in C\backslash \{0\} \\ 0 & \text{otherwise}
\end{array}
\right.
\end{equation}
clearly lies in the closure of the $0$-generated chamber $\Theta_{\vv}^+$; in fact it lies in the interior of this chamber by Remark~\ref{rem:essentialWalls}. Therefore $\theta_C$ is $0$-generated, so if we choose this linearisation when constructing the GIT quotient $\mathcal{M}(T)$, then the polarising ample line bundle is 
 \[
 L(\theta_C):=\bigotimes_{i\in Q_0} T_i^{\otimes \theta_i} = \bigotimes_{i\in C} T_i.
 \]
 The result follows from Theorem~\ref{thm:multigradedlinearseries} and Proposition~\ref{prop:SekiyaTakahashi}.
\end{proof}

 \begin{remark}
 In the special case where $G\subset \SL(3,\CC)$ is a finite abelian subgroup, Proposition~\ref{prop:inessentials} recovers the main result of Takahashi~\cite{Takahashi11} by reconstructing the $G$-Hilbert scheme $Y$ as an irreducible component of the fine moduli space of $0$-generated $A_C=\End_Y(T_C)$-modules of dimension vector $\vv_C$. One cannot strengthen this conclusion, simply because the moduli space $\mathcal{M}(T_C)$ need not be irreducible in this case, as shown in Example~\ref{exa:1/3(1,1,1)}.  
 \end{remark}

 \section{On Reid's recipe and surface essentials}
 
 We continue to work under the assumptions of the previous section, where $X$ is a Gorenstein affine toric threefold and $Y\cong \mathcal{M}(T)$ is a crepant resolution. Our goal is to present a general framework and some explicit examples that are orthogonal in spirit to Proposition~\ref{prop:inessentials}, with a view to obtaining an isomorphism $\mathcal{M}(T)\cong \mathcal{M}(T_C)$. The terminology `essentials' might suggest that keeping such vertices is crucial if $g_C$ is to be an isomorphism, but this is not the case; indeed, here we choose which essential vertices to \emph{remove} from the set $Q_0$.
 
 Our motivation comes from comparing Corollary~\ref{cor:surjective}\two\ with the derived category statement of Reid's recipe~\cite{CautisLogvinenko09,Logvinenko10,CautisCrawLogvinenko12,BCQV}. The key observation is the following (see Remark~\ref{rem:essentialWalls} for another equivalent condition):
 
 \begin{lemma}
 \label{lem:essentialsRR}
 A nonzero vertex $i\in Q_0$ is essential iff the object $\Psi(S_i)$ in $D^b(A)$ is a sheaf.
 \end{lemma}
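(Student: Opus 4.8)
The plan is to translate the representation-theoretic condition defining essentiality into a statement about the top cohomology sheaf of $\Psi(S_i)$, and then to invoke derived Reid's recipe to upgrade this to the assertion that $\Psi(S_i)$ is an honest sheaf on $Y$. Throughout I write $\mathcal{E}:=\Psi(S_i)\in D^b(Y)$, and I recall from \eqref{eqn:toricderivedequivalenceadjoint} that $\Psi(P_j)\cong T_j^{-1}$ is a line bundle for each $j\in Q_0$, and that every skyscraper $\mathcal{O}_y$ equals $\Psi(M_y)$ for the $0$-generated $A$-module $M_y=\Phi(\mathcal{O}_y)$ of dimension vector $\vv$ determined by $y\in Y\cong\mathcal{M}(T)$.

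First I would pin down the cohomological amplitude of $\mathcal{E}$. Since $A$ is an NCCR of a three-dimensional ring it has global dimension three, so $S_i$ admits a projective resolution of length at most three; applying the exact functor $\Psi$ termwise and using $\Psi(P_j)\cong T_j^{-1}$ represents $\mathcal{E}$ by a complex of vector bundles concentrated in cohomological degrees $[-3,0]$. In particular $\mathcal{H}^k(\mathcal{E})=0$ for $k>0$, and a short truncation argument then gives $\Hom_{D^b(Y)}(\mathcal{E},\mathcal{O}_y)\cong\Hom_Y(\mathcal{H}^0(\mathcal{E}),\mathcal{O}_y)$ for every closed point $y$. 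Next comes the translation: because $\Psi$ is fully faithful with $\Psi(M_y)=\mathcal{O}_y$, we have $\Hom_{D^b(Y)}(\mathcal{E},\mathcal{O}_y)=\Hom_A(S_i,M_y)$, which is nonzero precisely when $S_i$ embeds into $M_y$, i.e.\ lies in the socle of $M_y$. By definition $i$ is essential exactly when $S_i$ occurs in the socle of some $0$-generated module of dimension vector $\vv$, equivalently of some $M_y$; running over all $y$ and combining with the previous identification yields the key equivalence that $i$ is essential if and only if $\mathcal{H}^0(\mathcal{E})\neq 0$ (a nonzero coherent sheaf always admits a nonzero map to some skyscraper, and conversely). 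This already delivers the easy implication: if $\mathcal{E}=\Psi(S_i)$ is a sheaf then $\mathcal{H}^0(\mathcal{E})=\mathcal{E}\neq 0$, so $i$ is essential.

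The hard part is the reverse implication, that $\mathcal{H}^0(\mathcal{E})\neq 0$ forces $\mathcal{E}$ to be concentrated in degree zero rather than merely to have nonzero top cohomology. Purely homological input is not enough: one checks that $\Hom^3_{D^b(Y)}(\mathcal{E},\mathcal{O}_y)\cong\Hom_{D^b(Y)}(\mathcal{O}_y,\mathcal{E})^\vee=\Hom_A(M_y,S_i)^\vee=0$ for all $y$, since the head of a $0$-generated module is $S_0$ and $i\neq 0$; as $\Hom^k(\mathcal{E},\mathcal{O}_y)$ also vanishes for $k<0$, Bridgeland--Maciocia~\cite[Proposition~5.4]{BridgelandMaciocia02} bounds the homological dimension of $\mathcal{E}$ by two, but this still permits cohomology in degrees $-1$ and $-2$. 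To eliminate these I would appeal to derived Reid's recipe~\cite{CautisLogvinenko09,Logvinenko10,CautisCrawLogvinenko12,BCQV}, which for a consistent dimer model computes each $\Psi(S_i)$ explicitly and shows in particular that it is concentrated in a single cohomological degree, being either a sheaf or the shift of a sheaf supported on exceptional curves. Given this purity, $\mathcal{H}^0(\mathcal{E})\neq 0$ forces that single degree to be zero, so $\mathcal{E}$ is a sheaf, completing the equivalence.

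The main obstacle is thus precisely the purity (single-degree) statement: it is the substantive geometric content supplied by derived Reid's recipe, whereas the remaining steps are the formal translation between the socle condition on $M_y$ and the vanishing of $\mathcal{H}^0(\Psi(S_i))$. I expect the delicate point in writing this up to be matching the cohomological conventions of the cited recipe (in particular the sign of the shift distinguishing divisor-marking from curve-marking vertices) with the normalisation $\Psi(P_j)\cong T_j^{-1}$ used here, so that essentiality aligns with degree zero.
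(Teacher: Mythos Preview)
Your proof is correct and broadly aligned with the paper's, though you unpack more of the argument than the paper does. The paper's proof is terse: it first cites \cite[Lemma~4.2]{BCQV} to reduce essentiality to the existence of a \emph{torus-invariant} $0$-generated module with $S_i$ in its socle, and then simply invokes \cite[Theorem~1.1\one, Proposition~1.3]{BCQV}, which already package the equivalence between that socle condition and $\Psi(S_i)$ being a sheaf. You instead give a direct translation---showing without any torus-invariance reduction that essentiality is equivalent to $\mathcal{H}^0(\Psi(S_i))\neq 0$ via the identification $\Hom_{D^b(Y)}(\Psi(S_i),\mathcal{O}_y)\cong\Hom_A(S_i,M_y)$---and then appeal to derived Reid's recipe only for the purity statement that $\Psi(S_i)$ is concentrated in a single cohomological degree. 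Both routes ultimately lean on \cite{BCQV} for the hard direction; your version has the merit of isolating precisely which ingredient of Reid's recipe is needed (purity alone) and of making the easy implication self-contained, at the cost of a longer write-up. Your closing remark about matching sign conventions is apt but not a genuine obstacle: the normalisation $\Psi(P_j)\cong T_j^{-1}$ used here agrees with that in \cite{BCQV}.
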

 \begin{proof}
 The proof of Bocklandt--Craw--Quintero V\'{e}lez~\cite[Lemma~4.2]{BCQV} shows that a nonzero vertex $i\in Q_0$ is essential if and only if there exists a torus-invariant $0$-generated $A$-module that has a submodule isomorphic to $S_i$. Now apply \cite[Theorem~1.1\one, Proposition~1.3]{BCQV}. 
 \end{proof}
 
 \begin{remark}
 One can also show (see \cite[Theorem~1.4]{BCQV}) that each nonzero inessential (= not essential) vertex $i\in Q_0$ is such that the class $[\Psi(S_i)]\in \Knumc(Y)$ is equal to $-[\mathcal{F}]$, where $\mathcal{F}$ is the class of a sheaf. In particular,  the statement of Corollary~\ref{cor:surjective}\two\ does not hold if we remove inessential vertices from $Q_0$, so it does not apply in the situation of Proposition~\ref{prop:inessentials}.
 \end{remark}
 
 We now restrict ourselves to the case when $X$ is isomorphic to the quotient singularity $\mathbb{A}^3_\kk/G$, where $G\subset \SL(3,\CC)$ is a finite abelian subgroup, and where $Y$ is the $G$-Hilbert scheme (see Remark~\ref{rems:Reidsrecipe}\one). Let $\Irr(G)$ denote the set of isomorphism classes of irreducible representation of $G$; this is the vertex set of the McKay quiver. Reid's recipe \cite{Reid97,Craw05} marks every proper, torus-invariant curve and surface in the $G$-Hilbert scheme $Y$ with an irreducible representation of $G$; those surfaces that are isomorphic to the del Pezzo surface of degree six are marked with two irreducible representations. 
 
 \begin{proposition}
 \label{prop:essentials}
 Let $C\subseteq \Irr(G)$ be a subset obtained by removing at most one  irreducible representation that marks each proper, torus-invariant surface in $\ghilb$. If $\dim_i j_!(N_x) \neq 0$ for all $i\in Q_0\setminus C$ and all $x \in Y'$, then the universal morphism $g_C\colon Y\to \mathcal{M}(T_C)$ is an isomorphism.
 \end{proposition}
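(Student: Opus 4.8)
The plan is to prove that $g_C$ is an isomorphism by establishing separately that it is \emph{surjective} and that it is a \emph{closed immersion}, since a surjective closed immersion onto a reduced target is an isomorphism (this is the same endgame used in the proof of Proposition~\ref{prop:reconstructionisomorphism}). Throughout I would use that $G$ is abelian, so every irreducible representation is one-dimensional and $\vv=(1,\dots,1)$, i.e. $v_i=1$ for all $i\in Q_0$. For the closed-immersion half, Theorem~\ref{thm:multigradedlinearseries} together with Remark~\ref{rem:ample} reduces the problem to showing that the globally generated line bundle $L:=\bigotimes_{i\in C}\det(T_i)$ is ample on $Y$, because then $g_C$ agrees with the morphism $\varphi_{\vert L\vert}$ to the linear series of $L$, which is a closed immersion.

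First I would establish surjectivity. By construction $C$ is obtained by deleting only representations that mark surfaces, so every vertex $i\in Q_0\setminus C$ is a surface marker. The derived category formulation of Reid's recipe~\cite{CautisLogvinenko09,Logvinenko10,CautisCrawLogvinenko12,BCQV} shows that $\Psi(S_i)$ is a sheaf for every such $i$; equivalently, by Lemma~\ref{lem:essentialsRR}, each deleted vertex is essential. Combined with the standing hypothesis $\dim_i j_!(N_x)\neq 0$, which reads $\dim_i j_!(N_x)\geq 1=v_i$ for all $i\in Q_0\setminus C$ and all $x\in Y'$, this is exactly the input demanded by Corollary~\ref{cor:surjective}\two, and hence $g_C$ is surjective onto $\mathcal{M}(T_C)$.

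The main obstacle is the ampleness of $L$. Since each $T_i$ is globally generated (Corollary~\ref{cor:ggtautbundles}), $L$ is globally generated and hence nef, so it suffices to show that $L$ has strictly positive degree on every exceptional curve of $Y\to X$: indeed $g_C$ contracts a curve $\ell$ precisely when $\det(T_i)\cdot\ell=0$ for all retained $i\in C$, and I must rule this out. The tool here is the intersection theory of the tautological line bundles encoded by Reid's recipe, where the sign of $\det(T_\rho)\cdot\ell$ is controlled by whether $\rho$ marks a surface meeting $\ell$, so that the markers of the surfaces containing a given $\ell$ pair positively with it. The delicate case is a surface $S$ isomorphic to the del Pezzo surface of degree six, which carries \emph{two} markers and whose internal curves are detected only by these two classes; retaining at least one marker of $S$---guaranteed by the hypothesis that we delete at most one representation per surface---keeps $L$ positive on every such curve. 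Deleting both markers of a degree-six del Pezzo surface is exactly the degeneration in which the summands of $T_C$ fail to generate $\Pic(Y)$, as in Example~\ref{ex:DP6}. Carrying out these positivity checks curve-by-curve from the explicit Reid's recipe data is the technical heart of the argument.

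Granting the ampleness of $L$, Theorem~\ref{thm:multigradedlinearseries} and Remark~\ref{rem:ample} make $g_C$ a closed immersion, whose image is by Proposition~\ref{prop:SekiyaTakahashi} the irreducible component $Y_\theta$ of $\mathcal{M}(T_C)$. Together with the surjectivity established above this forces $Y_\theta=\mathcal{M}(T_C)$ and identifies $g_C$ as an isomorphism $Y\cong\mathcal{M}(T_C)$.
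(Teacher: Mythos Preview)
Your overall architecture matches the paper's: prove ampleness of $L=\bigotimes_{i\in C}\det(T_i)$ to get a closed immersion via Theorem~\ref{thm:multigradedlinearseries} and Remark~\ref{rem:ample}, then invoke Lemma~\ref{lem:essentialsRR} and Corollary~\ref{cor:surjective}\two\ for surjectivity. The surjectivity half is essentially identical to the paper's argument.

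The difference is in how you argue ampleness, and here your route is both harder and resting on an unsupported claim. You try to show $L\cdot\ell>0$ by appealing to the \emph{surface} markers of the surfaces containing $\ell$, and you invoke the ``at most one per surface'' hypothesis to guarantee one such marker survives in $C$. But you have not justified that a surface marker $\rho$ satisfies $\deg T_\rho\vert_\ell>0$ for every torus-invariant curve $\ell$ in that surface; this is not what Reid's recipe gives you directly, and the del Pezzo case you single out as ``delicate'' is a symptom of this. The paper's argument is cleaner and sidesteps all of this: each torus-invariant curve $\ell$ is itself marked by some $\rho\in\Irr(G)$, and \cite[Lemma~7.2]{Craw05} gives $\deg T_\rho\vert_\ell=1$. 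Since \cite[Corollary~4.6]{Craw05} says a representation marks either curves or a single surface (never both), and $C$ is obtained by removing only surface markers, this curve marker $\rho$ automatically lies in $C$. So ampleness holds, and notably the ``at most one per surface'' restriction plays no role in this step; that hypothesis is relevant only insofar as it makes the dimension condition $\dim_i j_!(N_x)\neq 0$ plausible (cf.\ Remark~\ref{rems:Reidsrecipe}\two\ and Example~\ref{ex:DP6}), not for ampleness.
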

 \begin{proof}
  Let $\theta_C\in \Theta_{\vv}$ denote the stability parameter defined as in \eqref{eqn:thetaC} above. Again we claim that the line bundle $L(\theta_C)\cong \bigotimes_{\rho\in C} T_\rho$ on $Y$ is ample. To see this, it suffices to show that for each torus-invariant curve $\ell\subset Y$, there exists $\rho\in C$ such that $\deg T_\rho\vert_\ell >0$. Reid's recipe labels the curve $\ell$ with some $\rho\in \Irr(G)$, and we have $\deg T_\rho\vert_\ell = 1$ by \cite[Lemma~7.2]{Craw05}. To see that $\rho\in C$, \cite[Corollary~4.6]{Craw05} states that every irreducible representation marks either a (collection of) curve(s) in $Y$ or a unique proper surface, and since we obtain $C$ from $Q_0=\Irr(G)$ by removing only representations that mark surfaces, we have $\rho \in C$ after all.
 
 Theorem~\ref{thm:multigradedlinearseries} and Proposition~\ref{prop:SekiyaTakahashi} imply that $g_C\colon Y\to \mathcal{M}(T_C)$ is an isomorphism onto the irreducible component $Y_\theta$ in $\mathcal{M}(T_C)$. Since we obtain $C$ by removing only essential vertices from $\Irr(G)$ , the object $\Psi(S_\rho)$ is a sheaf for each $\rho\in Q_0\setminus C$ by Lemma~\ref{lem:essentialsRR}. Since $\dim_i j_!(N_x) \neq 0$ for all $i\in Q_0\setminus C$ and all $x \in Y'$, Corollary~\ref{cor:surjective}\two\ shows that $g_C$ is an isomorphism.
 \end{proof}
 
  \begin{remarks}
  \label{rems:Reidsrecipe}
  \begin{enumerate}
  \item In Proposition~\ref{prop:essentials} we assumed that $X=\mathbb{A}^3_\kk/G$ (and hence $Y\cong \ghilb$) because \cite[Corollary~4.6]{Craw05} is known to hold only for the $G$-Hilbert scheme at present.
  \item The condition in Proposition~\ref{prop:essentials} that we remove from $\Irr(G)$ at most one irreducible representation that marks each surface implies that the indecomposable summands of $T_C$ generate the Picard group of $Y$; see \cite[Theorem~6.1]{Craw05}. Example~\ref{ex:DP6} illustrates that the statement of Proposition~\ref{prop:essentials} can fail without this condition.
  \end{enumerate}
  \end{remarks}

We now present several examples where we directly calculate the $A$ representation $j_!(N)=(V,\rho)$ in order to show when the hypotheses of Proposition~\ref{prop:essentials} hold. To calculate the vector spaces $V_k=e_kj_!(N)$ we present $e_kA e_C$ as a right $A_C$-module by a sequence 
  \[
A_C^{\oplus I} \xrightarrow{P} A_C^{\oplus J} \rightarrow e_k A e_C \rightarrow 0
\]
and apply $(-) \otimes_{A_C} N$ to produce a sequence
\[
N^{\oplus I} \xrightarrow{P_N} N^{\oplus J} \rightarrow e_k j_!(N) \rightarrow 0
\]
that presents the vector space $V_k$ as the cokernel of a matrix $P_N$.

\begin{example} \label{ex:1/6(1,2,3)}
 For the cyclic subgroup $G\subset \SL(3,\CC)$ of type $\frac{1}{6}(1,2,3)$, an NCCR for the $G$-invariant ring $R:=\kk[x,y,z]^{G}$ can be presented as the McKay quiver with relations:
\begin{align*}
\begin{aligned}
\begin{tikzpicture}
\node (A) [draw=none,minimum size=6cm,regular polygon,regular polygon sides=6] at (0,0) {};
\node (M0) at (A.corner 1) {$0$};
\node (M1) at (A.corner 2) {$1$};
\node (M2) at (A.corner 3) {$2$};
\node (M3) at (A.corner 4) {$3$};
\node (M4) at (A.corner 5) {$4$};
\node (M5) at (A.corner 6) {$5$};
\draw [->,bend left=0] (M0) to node[gap] {$\scriptstyle{x_0}$} (M1);
\draw [->,bend left=0] (M1) to node[gap] {$\scriptstyle{x_1}$} (M2);
\draw [->,bend left=0] (M2) to node[gap] {$\scriptstyle{x_2}$} (M3);
\draw [->,bend left=0] (M3) to node[gap] {$\scriptstyle{x_3}$} (M4);
\draw [->,bend left=0] (M4) to node[gap] {$\scriptstyle{x_4}$} (M5);
\draw [->,bend left=0] (M5) to node[gap] {$\scriptstyle{x_5}$} (M0);
\draw [->,bend right=12] (M0) to node[gap] {$\scriptstyle{y_0}$} (M2);
\draw [->,bend right=12] (M1) to node[gap] {$\scriptstyle{y_1}$} (M3);
\draw [->,bend right=12] (M2) to node[gap] {$\scriptstyle{y_2}$} (M4);
\draw [->,bend right=12] (M3) to node[gap] {$\scriptstyle{y_3}$} (M5);
\draw [->,bend right=12] (M4) to node[gap] {$\scriptstyle{y_4}$} (M0);
\draw [->,bend right=12] (M5) to node[gap] {$\scriptstyle{y_5}$} (M1);
\draw [->,bend left=12] (M0) to node[gap] {$\scriptstyle{z_0}$} (M3);
\draw [->,bend left=12] (M1) to node[gap] {$\scriptstyle{z_1}$} (M4);
\draw [->,bend left=12] (M2) to node[gap] {$\scriptstyle{z_2}$} (M5);
\draw [->,bend left=12] (M3) to node[gap] {$\scriptstyle{z_3}$} (M0);
\draw [->,bend left=12] (M4) to node[gap] {$\scriptstyle{z_4}$} (M1);
\draw [->,bend left=12] (M5) to node[gap] {$\scriptstyle{z_5}$} (M2);
\end{tikzpicture}
\end{aligned}
\qquad
\begin{aligned}
\left.\begin{array}{c}
x_{i+2}y_i = y_{i+1}x_i \\
y_{i+3}z_i = z_{i+2}y_i \\
z_{i+1}x_i = x_{i+3}z_i \end{array}\right\} \text{for } 0\leq i\leq 5
\end{aligned}
\end{align*}
The cornering set $C=\{0,1,2,3,4\}$ removes the surface essential vertex 5, and the right $A_C$-module $e_5 A e_C$ is generated by $(x_4, y_3, z_2)$ and can be presented as the cokernel of the matrix
\[
P=\begin{bmatrix}
0 &  -z_1 &y_2 \\
z_0 & 0 & -x_2 \\
 -y_0  & x_1  & 0 
\end{bmatrix}.
\]
An $A_C$ representation $(N,\lambda)$ of dimension vector $(1,1,1,1,1)$ assigns a linear map $\lambda_p \in \textnormal{Mat}(\kk) \cong \kk$ to each path $p \in A_C$ such that  $\dim_5 j_!(N)= \dim \Coker P_N $ where 
\[
P_N:=
\begin{bmatrix}
0 & -\lambda_{z_1} &  \lambda_{y_2}  \\
\lambda_{z_0}  & 0 & -\lambda_{x_2}  \\
-\lambda_{y_0}& \lambda_{x_1}  & 0 
\end{bmatrix}.
\]
The matrix $P_N$ has determinant of  $\lambda_{x_1} \lambda_{y_2} \lambda_{z_0} - \lambda_{x_2} \lambda_{y_0} \lambda_{z_1}$, and it can be calculated using the relations on $A_C$ that
\begin{align*}
\begin{aligned}
\lambda_{x_0x_1} \det P &= \lambda_{x_0x_1}(\lambda_{x_1} \lambda_{y_2} \lambda_{z_0} - \lambda_{x_2} \lambda_{y_0} \lambda_{z_1}) \\
&= \lambda_{x_1} \lambda_{x_0x_1y_2} \lambda_{z_0} - \lambda_{x_0x_1x_2} \lambda_{y_0} \lambda_{z_1} \\
&= \lambda_{x_1} \lambda_{y_0x_2x_3} \lambda_{z_0} - \lambda_{x_1x_2} \lambda_{y_0} \lambda_{x_0z_1} \\
&= \lambda_{x_1x_2} \lambda_{y_0} \lambda_{z_0x_3} - \lambda_{x_1x_2} \lambda_{y_0} \lambda_{z_0x_3} \\
&= \lambda_{x_1} \lambda_{x_2} \lambda_{y_0} \lambda_{z_0x_3} - \lambda_{x_1x_2} \lambda_{y_0} \lambda_{z_0} \lambda_{x_3} \\
&=0
\end{aligned} 
\quad 
\begin{aligned}
\lambda_{y_0} \det P & = \lambda_{y_0}(\lambda_{x_1} \lambda_{y_2} \lambda_{z_0} - \lambda_{x_2} \lambda_{y_0} \lambda_{z_1}) \\
& = \lambda_{x_1y_2} \lambda_{y_0} \lambda_{z_0} - \lambda_{y_0x_2} \lambda_{y_0} \lambda_{z_1} \\
& = \lambda_{y_1x_3} \lambda_{y_0} \lambda_{z_0} - \lambda_{x_0y_1} \lambda_{y_0} \lambda_{z_1} \\
& = \lambda_{y_1} \lambda_{y_0} \lambda_{z_0x_3} - \lambda_{x_0y_1} \lambda_{y_0} \lambda_{z_1} \\
& = \lambda_{y_1} \lambda_{y_0} \lambda_{x_0}\lambda_{z_1} - \lambda_{x_0} \lambda_{y_1} \lambda_{y_0} \lambda_{z_1} \\
&=0.
\end{aligned}
\end{align*}
When $N$ is 0-generated there is a path $p$ from vertex 0 to vertex 2 such that $\lambda_p \neq 0$. All paths in $A$ from 0 to 2 either factor through vertex 5 or one of the paths $x_0x_1$ or $y_0$. As such $\det P =0$ and $\dim_5 j_!(N)>0$. It's easy to compute using Reid's recipe that $5$ is a surface essential vertex, so the $G$-Hilbert scheme is isomorphic to $\mathcal{M}(T_C)$ in this case by Proposition \ref{prop:essentials}.
\end{example}

\begin{remark}
 For the previous example, vertex 5 is the only essential vertex. It is natural to ask whether the morphism $g_C\colon Y\to \mathcal{M}(T_C)$ from Proposition~\ref{prop:SekiyaTakahashi} is always an isomorphism when we corner away essential vertices. The next example shows that this is not the case when we corner away a pair of essential vertices that mark the same proper torus-invariant surface according to Reid's recipe. The simplest example of the $G$-Hilbert scheme with this phenomenon is for an action of the group $(\ZZ/3\ZZ)^{\oplus 2}$, because in that case the $\ghilb$ contains a del Pezzo surface of degree $6$ (see \cite[Lemma~3.4]{Craw05}). Here we choose a simpler example defined by a consistent dimer model, so in this case we use Reid's recipe as described by Tapia Amador~\cite{TapiaAmador15}.
 \end{remark}

\begin{example}\label{ex:DP6}
 Let $\text{dP}_6$ denote the del Pezzo surface of degree six, let $Y$ denote the total space of the canonical bundle on $\text{dP}_6$, and write $\pi\colon Y\to \text{dP}_6$ for the contraction of the zero-section. There is a full strong exceptional collection of globally generated line bundles $(L_0,\dots, L_5)$ on $\text{dP}_6$, and the bundle $\bigoplus_{0\leq i\leq 5} \pi^*(L_i)$ is a tilting bundle on $Y$ whose endomorphism algebra can be presented by the following quiver with relations
\begin{align*}
\begin{aligned}
\begin{tikzpicture}[yscale=0.77]
\path[use as bounding box] (-1,-2.8) rectangle (6,2.8);
\node (M0) at (0,0) {$0$};
\node (M1) at (3,1.5) {$1$};
\node (M2) at (3,0) {$2$};
\node (M3) at (3,-1.5) {$3$};
\node (M4) at (6,1) {$4$};
\node (M5) at (6,-1) {$5$};
\draw [->,bend left=8] (M0) to node[gap] {$\scriptstyle{a_1}$} (M1);
\draw [->,bend left=-8] (M0) to node[gap] {$\scriptstyle{b_1}$} (M1);
\draw [->,bend left=8] (M0) to node[gap] {$\scriptstyle{a_2}$} (M2);
\draw [->,bend left=-8] (M0) to node[gap] {$\scriptstyle{b_2}$} (M2);
\draw [->,bend left=8] (M0) to node[gap] {$\scriptstyle{a_3}$} (M3);
\draw [->,bend left=-8] (M0) to node[gap] {$\scriptstyle{b_3}$} (M3);
\draw [->,bend left=0] (M1) to node[gap, near start] {$\scriptstyle{x_4}$} (M4);
\draw [->,bend left=0] (M2) to node[gap,near start] {$\scriptstyle{y_4}$} (M4);
\draw [->,bend left=0] (M3) to node[gap, near start] {$\scriptstyle{z_4}$} (M4);
\draw [->,bend left=0] (M1) to node[gap, near start] {$\scriptstyle{x_5}$} (M5);
\draw [->,bend left=0] (M2) to node[gap, near start] {$\scriptstyle{y_5}$} (M5);
\draw [->,bend left=0] (M3) to node[gap, near start] {$\scriptstyle{z_5}$} (M5);
\draw [->,looseness=0.75, out=90, in=135] (M4) to node[gap] {$\scriptstyle{d_4}$} (M0);
\draw [->,looseness=1, out=90, in=135] (M4) to node[gap] {$\scriptstyle{f_4}$} (M0);
\draw [->,looseness=1.25, out=90, in=135] (M4) to node[gap] {$\scriptstyle{h_4}$} (M0);
\draw [->,looseness=0.75, out=-90, in=-135] (M5) to node[gap] {$\scriptstyle{d_5}$} (M0);
\draw [->,looseness=1, out=-90, in=-135] (M5) to node[gap] {$\scriptstyle{f_5}$} (M0);
\draw [->,looseness=1.25, out=-90, in=-135] (M5) to node[gap] {$\scriptstyle{h_5}$} (M0);
\end{tikzpicture}
\end{aligned}
& \qquad
\begin{aligned}
y_4a_2&=z_4b_3, \\
z_4a_3&=x_4b_1, \\
x_4a_1&=  y_4b_2, \\
x_5b_1&=y_5a_2, \\
y_5b_2&= z_5a_3, \\
z_5b_3&= x_5a_1,
\end{aligned}
 \qquad
\begin{aligned}
h_4x_4&=h_5x_5, \\
d_4y_4&=d_5y_5, \\
f_4z_4&=f_5z_5, \\
d_5x_5&=f_4x_4, \\
f_5y_5&=h_4y_4, \\
h_5z_5&=d_4z_4, 
\end{aligned}
\qquad
\begin{aligned}
a_1h_4&=b_1f_4, \\
b_1d_5&=a_1h_5, \\
a_2d_4&=b_2h_4, \\
b_2f_5&=a_2d_5, \\
a_3f_4&=b_3d_4, \\
b_3h_5&=a_3f_5.
\end{aligned}
\end{align*}
The cornering set $C:=\{ 0,1,2,3,4 \}$ removes the surface essential vertex $5$. An $A_C$ representation $(N,\lambda)$ of dimension vector $(1,1,1,1,1)$ assigns a linear maps $\lambda_p \in \kk$ to paths in $A_C$.  The right $A_C$-module $e_5Ae_C$ is generated by $(x_5,y_5,z_5)$ and the vector space $e_5 j_!(N)$ is given by the cokernel of the matrix
\[
P_N= 
\begin{bmatrix}
0 & -\lambda_{a_1} & \lambda_{b_1}\\
 \lambda_{b_2}  & 0 &  -\lambda_{a_2} \\
 -\lambda_{a_3}   & \lambda_{b_3} & 0
\end{bmatrix}
\]
 which has determinant $\lambda_{b_1} \lambda_{ b_2} \lambda_{b_3}-\lambda_{a_1} \lambda_{ a_2} \lambda_{ a_3}$. Using the relations on $A_C$ we calculate
\begin{align*}
 \lambda_{x_4} \det P  & = \lambda_{x_4}(\lambda_{b_1} \lambda_{ b_2} \lambda_{b_3}-\lambda_{a_1} \lambda_{ a_2} \lambda_{ a_3}) \\
 & = (\lambda_{b_1x_4} \lambda_{ b_2} \lambda_{b_3}-\lambda_{a_1x_4} \lambda_{ a_2} \lambda_{ a_3}) \\
  & = (\lambda_{a_3z_4} \lambda_{ b_2} \lambda_{b_3}-\lambda_{b_2y_4} \lambda_{ a_2} \lambda_{ a_3}) \\
    & = (\lambda_{a_3} \lambda_{ b_2} \lambda_{b_3z_4}-\lambda_{b_2} \lambda_{ a_2y_4} \lambda_{ a_3}) \\
        & = (\lambda_{a_3} \lambda_{ b_2} \lambda_{a_2y_4}-\lambda_{b_2} \lambda_{ a_2y_4} \lambda_{ a_3}) =0,
\end{align*}
and similarly $ \lambda_{y_4} \det P =0$ and $\lambda_{z_4} \det P =0$. When $N$ is 0-generated there is a path $p$ from vertex 0 to vertex 4 such that $\lambda_p \neq 0$. Any such path contains one of $x_4,y_4$ or $z_4$ so one of $\lambda_{x_4}, \lambda_{y_4}$, or $\lambda_{z_4}$ must be non-zero, so $\det(P)=0$ and hence $\dim \Coker P >0$. In particular $\dim_5 j_!(N) >0$ and by Proposition \ref{prop:essentials} the cornering morphism $g_C$ is surjective because 5 is a surface essential.

The cornering set $C'=\{0,1,2,3\}$ removes both vertices 4 and 5, and for an $A_{C'}$ representation $(N',\lambda')$ of dimension $(1)_{i \in C'}$ the vector space $e_5 j_!(N')$ is the cokernel of the matrix
\[ P'_{N'}= 
\begin{bmatrix}
0 & -\lambda'_{a_1} & \lambda'_{b_1}\\
 \lambda'_{b_2}  & 0 &  -\lambda'_{a_2} \\
 -\lambda'_{a_3}   & \lambda'_{b_3} & 0
\end{bmatrix}
\] However, as the vertex $4$ does not appear in $C'$ the stability condition on $N'$ does not require the existence of a non-zero path from $0$ to $4$ and the determinant of $P'_{N'}$ need not be zero. Indeed, the $A_{C'}$ representation $N'$ defined by $\lambda'_{a_1}=\lambda'_{a_2}=\lambda'_{a_3}=1$ and all other $\lambda'_p=0$ is 0-generated, the cokernel of $P'_{N'}$ is trivial, and $\dim_5 j_!(N')=0$. As such $g_{C'}$ is not surjective. 

 Note in this case that the tautological bundles $T_i$ on $Y$ satisfy $T_4\otimes T_5\cong T_1\otimes T_2\otimes T_3$, and removing both $T_4$ and $T_5$ leaves too few bundles to generate the Picard group of $Y$.  
\end{example}

\appendix

\section{The numerical Grothendieck group for compact support}
\label{sec:appendix}
 Here we recall the numerical Grothendieck group for compact support introduced in \cite{BayerCrawZhang16}. As an application, we extend to NCCRs satisfying Assumption~\ref{ass:KrullSchmidt} the connectedness result for the $G$-Hilbert scheme due to Bridgeland--King--Reid~\cite{BKR01}; this result was well-known to Van den Bergh~\cite[Remark~6.6]{VdB04}, but we give a slightly simpler proof using the numerical Grothendieck group. 

 Let $A$ be an associative $\kk$-algebra of the form $\kk Q/I$, where $Q$ is a finite, connected quiver and $I\subset \kk Q$ is a two-sided ideal generated by linear combinations of paths of length at least one. Each $i\in Q_0$ determines an indecomposable projective $A$-module $P_i:= Ae_i$. Our assumption on $I$ ensures that there is also a one-dimensional simple $A$-module $S_i=\kk e_i$ on which the class in $A$ of each arrow acts as zero. Let $\Kfin(A)$ denote the Grothendieck group of the bounded derived category $\Dfinb(A)$ of finite-dimensional $A$-modules. Since $A$ has finite global dimension, the Euler form $\chi_A\colon K(A)\times \Kfin(A)\to \ZZ$ is the bilinear form $\chi_A(E,F) = \sum_{i\in \ZZ} (-1)^i \dim_\kk \Ext^i_A(E,F)$. The \emph{numerical Grothendieck group} for $A$ is defined to be the quotient 
 \[
 \Knum(A):= \Kfin(A)/K(A)^\perp.
 \]
 
 \begin{lemma}
 \label{lem:KnumANCCR}
 If the classes $[P_i]$ for $i\in Q_0$ generate $K(A)$, then $K(A)\cong \bigoplus_{i\in Q_0} \ZZ[P_i]$, $\Knum(A)\cong \bigoplus_{i\in Q_0} \ZZ[S_i]$, and $\chi_A$ descends to a perfect pairing $\chi_A\colon K(A)\times \Knum(A)\to \ZZ$.
 \end{lemma}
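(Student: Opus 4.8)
The plan is to reduce all three assertions to a single computation of the Euler pairing between the projectives $P_i$ and the simples $S_j$, after which everything follows by elementary linear algebra over $\ZZ$. The key computation is that $\chi_A(P_i,S_j)=\delta_{ij}$: since $P_i=Ae_i$ is projective we have $\Ext^k_A(P_i,S_j)=0$ for $k>0$ and $\Hom_A(Ae_i,S_j)\cong e_iS_j$, and because $S_j=\kk e_j$ with the orthogonal idempotents $e_i$ acting in the evident way, $e_iS_j=\delta_{ij}\kk$. Thus the Gram matrix of $\chi_A$ in the two families $\{[P_i]\}$ and $\{[S_j]\}$ is the identity.

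Granting this, I would first show $K(A)\cong\bigoplus_{i\in Q_0}\ZZ[P_i]$. The hypothesis supplies generation, so only independence needs proof: if $\sum_i a_i[P_i]=0$ in $K(A)$, then pairing against $[S_j]\in\Kfin(A)$ gives $a_j=\chi_A(\sum_i a_i[P_i],[S_j])=0$ for every $j$. For the numerical group, consider the homomorphism $\phi\colon\Kfin(A)\to\ZZ^{Q_0}$, $[F]\mapsto(\chi_A([P_i],[F]))_{i\in Q_0}$. Because the $[P_i]$ generate $K(A)$, the kernel of $\phi$ is exactly $K(A)^\perp$, so $\phi$ descends to an injection $\Knum(A)\hookrightarrow\ZZ^{Q_0}$; and since $\phi([S_j])$ is the $j$-th standard basis vector by the computation above, $\phi$ is onto, hence an isomorphism carrying $\{[S_j]\}$ to the standard basis. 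This yields $\Knum(A)\cong\bigoplus_{j\in Q_0}\ZZ[S_j]$ with the $[S_j]$ a free basis.

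Finally, the form $\chi_A$ descends to $K(A)\times\Knum(A)$ by the very definition of $\Knum(A)$ as the quotient of $\Kfin(A)$ by the right radical $K(A)^\perp$; in the bases $\{[P_i]\}$ and $\{[S_j]\}$ its matrix is the identity, which is invertible over $\ZZ$, so the descended pairing is perfect. I do not expect a genuine obstacle here: the only points requiring care are that the Euler form is well defined as stated (finiteness and eventual vanishing of the $\Ext$ groups), which is exactly what finite global dimension provides, and that $S_j$ is one-dimensional with arrows acting as zero so that the $\Hom$ computation holds --- this is precisely where the hypothesis that $I$ is generated by paths of length at least one enters.
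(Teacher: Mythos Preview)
Your argument is correct and complete. The paper's own proof is a two-line deferral to \cite[Lemma~7.1.1]{BayerCrawZhang16}: it observes that the hypothesis makes the $[P_i]$ generate $\Knum(A)^\vee$ and then invokes that external lemma for the conclusion. What you have written is essentially an unpacking of that citation: the core computation $\chi_A(P_i,S_j)=\delta_{ij}$ is exactly the input the cited lemma uses, and your map $\phi$ identifying $\Knum(A)$ with $\ZZ^{Q_0}$ via the Euler pairing is the mechanism behind it. So the two approaches are the same in substance; yours is simply self-contained rather than delegated, which has the advantage that the reader sees directly where the hypotheses (finite global dimension for well-definedness of $\chi_A$, and the assumption on $I$ for the existence of the vertex simples $S_j$) are used.
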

 \begin{proof}
 The assumption ensures that the classes $[P_i]$ for $i\in Q_0$ generate $\Knum(A)^\vee$. The result is immediate from \cite[Lemma~7.1.1]{BayerCrawZhang16}. 
 \end{proof}
 
  To describe the numerical Grothendieck group on the geometric side, let $Y$ be a smooth scheme that is projective over an affine scheme of finite type, and let $T$ be a tilting bundle on $Y$ with endomorphism algebra $A:=\End_Y(T)$. Buchweitz--Hille~\cite[Proposition~2.6]{BuchweitzHille13} show that $T^\vee$ is also a tilting bundle with $\End_Y(T^\vee)\cong A^{\text{op}}$, so 
  \begin{equation}
  \label{eqn:NCCRequivPhi}
 \Phi(-)=\mathbf{R}\!\Hom_Y(T^\vee,-)\colon D^b(Y) \longrightarrow D^b(A)
 \end{equation}
 is an equivalence of derived categories with quasi-inverse 
  \begin{equation}
  \label{eqn:NCCRequivPsi}
 \Psi(-) = T^\vee\otimes_{A} -\colon D^b(A)\longrightarrow D^b(Y).
 \end{equation}
 Any such algebra $A$ can be presented as a quotient $\kk Q/I$, but the generators of $I$ may involve idempotents as noted in Remark~\ref{rem:idempotents}; to avoid this, we assume that the generators of $I$ are linear combinations of paths of length at least one so that Lemma~\ref{lem:KnumANCCR} holds.
 
 Let $K(Y)$ and $K_c(Y)$ denote the Grothendieck groups of the categories $D^b(Y)$ and $D^b_c(Y)$ respectively. Since $Y$ is smooth, the Euler form $\chi_Y\colon K(Y)\times K_c(Y)\to \ZZ$ is the bilinear form given by $\chi_Y(E,F) = \sum_{i\in \ZZ} (-1)^i \dim_\kk \Ext^i_{\cO_Y}(E,F)$, and the \emph{numerical Grothendieck group for compact support} on $Y$ is defined to be the quotient
 \[
 \Knumc(Y):= K_c(Y)/K(Y)^\perp.
 \]
 The geometric analogue of Lemma~\ref{lem:KnumANCCR} follows by applying the equivalence $\Psi$ from \eqref{eqn:NCCRequivPsi}:
 
 \begin{lemma}
 \label{lem:KnumcY}
We have that $K(Y)\cong \bigoplus_{0\leq i\leq n} \ZZ[T_i^\vee]$ and $\Knumc(Y)\cong \bigoplus_{0\leq i\leq n} \ZZ[\Psi(S_i)]$, and the Euler form $\chi_Y$ descends to a perfect pairing $\chi_Y\colon K(Y)\times \Knumc(Y)\to \ZZ$.
 \end{lemma}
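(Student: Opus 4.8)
The plan is to transport Lemma~\ref{lem:KnumANCCR} across the derived equivalence $\Psi$ from \eqref{eqn:NCCRequivPsi}, carrying the free generators $[P_i]$ and $[S_i]$ on the algebraic side to the classes $[T_i^\vee]$ and $[\Psi(S_i)]$ on the geometric side. Two structural facts make this transport run. First, $\Psi(P_i)=T^\vee\otimes_A Ae_i\cong T^\vee e_i\cong T_i^\vee$ for each $i$, since the idempotent $e_i=\id_{T_i}$ cuts out the corresponding summand of the $(\cO_Y,A)$-bimodule $T^\vee$. Second, the equivalence $\Psi$ restricts to an equivalence $\Dfinb(A)\xrightarrow{\ \sim\ }D^b_c(Y)$ between finite-dimensional modules and objects of proper support, by \cite[Lemma~7.1.1]{BayerCrawZhang16}.

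First I would verify the hypothesis of Lemma~\ref{lem:KnumANCCR}, namely that the classes $[P_i]$ generate $K(A)$. Because $Y$ is smooth we have $D^b(Y)=\Dperf(Y)$; the tilting object $\Psi(A)=T^\vee$ is a compact generator, so the thick subcategory it generates is all of $D^b(Y)=\Dperf(Y)$, and transporting through $\Psi$ shows $D^b(A)=\Dperf(A)$, i.e.\ $A$ has finite global dimension. Hence $D^b(A)$ is the thick subcategory split-generated by the indecomposable projectives $\{P_i\}_{i\in Q_0}$, so their classes generate $K(A)$. Lemma~\ref{lem:KnumANCCR} then applies and yields $K(A)\cong\bigoplus_{i\in Q_0}\ZZ[P_i]$, $\Knum(A)\cong\bigoplus_{i\in Q_0}\ZZ[S_i]$, and the fact that $\chi_A$ descends to a perfect pairing $K(A)\times\Knum(A)\to\ZZ$.

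It remains to move these statements across $\Psi$. The equivalence induces a group isomorphism $K(A)\xrightarrow{\ \sim\ }K(Y)$ and, by restriction, an isomorphism $\Kfin(A)\xrightarrow{\ \sim\ }K_c(Y)$, sending $[P_i]\mapsto[T_i^\vee]$ and $[S_i]\mapsto[\Psi(S_i)]$. Since any triangulated equivalence preserves graded morphism spaces, $\Ext^k_Y(\Psi E,\Psi F)\cong\Ext^k_A(E,F)$ for all $k$, whence $\chi_Y(\Psi E,\Psi F)=\chi_A(E,F)$ for $E\in K(A)$ and $F\in\Kfin(A)$. Consequently $\Psi$ identifies the orthogonal complement $K(A)^\perp\subseteq\Kfin(A)$ with $K(Y)^\perp\subseteq K_c(Y)$, and therefore descends to an isomorphism $\Knum(A)\cong\Knumc(Y)$ carrying $[S_i]$ to $[\Psi(S_i)]$. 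Combining these, $K(Y)\cong\bigoplus_{0\le i\le n}\ZZ[T_i^\vee]$ and $\Knumc(Y)\cong\bigoplus_{0\le i\le n}\ZZ[\Psi(S_i)]$, and the perfect pairing on the algebraic side transports to a perfect pairing $\chi_Y\colon K(Y)\times\Knumc(Y)\to\ZZ$. The argument is essentially formal; the only points demanding care are the verification that $A$ has finite global dimension, so that Lemma~\ref{lem:KnumANCCR} is in force, and the matching of the two orthogonal complements $K(A)^\perp$ and $K(Y)^\perp$, both of which reduce to the Euler-form compatibility recorded above.
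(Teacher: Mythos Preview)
Your proof is correct and follows essentially the same approach as the paper: both transport Lemma~\ref{lem:KnumANCCR} across the equivalence $\Psi$, using $\Psi(P_i)\cong T_i^\vee$ and the identification of $\Knum(A)$ with $\Knumc(Y)$. You are somewhat more explicit than the paper, which simply cites \cite[Theorem~7.2.1]{BayerCrawZhang16} for the numerical isomorphism, whereas you spell out the Euler-form compatibility and the matching of orthogonal complements directly, and you also verify the hypothesis of Lemma~\ref{lem:KnumANCCR} (that the $[P_i]$ generate $K(A)$) rather than leaving it implicit.
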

 \begin{proof}
 The equivalence $\Psi$ induces isomorphisms $K(A)\cong K(Y)$ and $\Knum(A)\cong \Knumc(Y)$ by \cite[Theorem~7.2.1]{BayerCrawZhang16}. We compute $\Psi(P_i) = T^\vee\otimes_AAe_i = T_i^\vee$ for $i\in Q_0$, and the result is immediate from Lemma~\ref{lem:KnumANCCR}. 
  \end{proof} 

We now generalise the statement and proof of Bridgeland--King--Reid~\cite[Section~8]{BKR01}.
 
\begin{proposition}
 \label{prop:connected}
 Let $A$ be an \emph{NCCR} satisfying Assumption~\ref{ass:KrullSchmidt}. For any generic $\theta\in \Theta_{\vv}$, the moduli space $\cM(A,\vv,\theta)$ is connected. 
 \end{proposition}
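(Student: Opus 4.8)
The plan is to feed the problem through the derived equivalence supplied by Van den Bergh and then replace the intersection-theoretic estimate \cite[Lemma~8.1]{BKR01} by a single evaluation of the Euler form on the numerical Grothendieck group from Lemma~\ref{lem:KnumcY}. First I would record the input. Since $A$ is an \emph{NCCR} satisfying Assumption~\ref{ass:KrullSchmidt}, \cite[Theorem~6.3.1, Remark~6.6.1]{VdB04} shows that for \emph{any} generic $\theta$ the scheme $Y:=\cM(A,\vv,\theta)$ is smooth and projective over $\Spec R$ and that its tautological bundle $T$ is a tilting bundle; by \cite[Proposition~2.6]{BuchweitzHille13} the functors $\Phi$ and $\Psi$ of \eqref{eqn:NCCRequivPhi}--\eqref{eqn:NCCRequivPsi} are then mutually inverse equivalences. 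Crucially, none of this presupposes that $Y$ is connected, so Lemma~\ref{lem:KnumcY} applies verbatim to this (a priori possibly disconnected) $Y$, yielding the isomorphism $\Phi_*\colon\Knumc(Y)\xrightarrow{\sim}\Knum(A)$ and the fact that $\chi_Y$ descends to a pairing on $K(Y)\times\Knumc(Y)$.

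The key observation I would isolate is that every closed point of the moduli space has the same skyscraper class. Each closed point $y\in Y$ corresponds by construction to a $\theta$-stable $A$-module $M_y\cong\Phi(\cO_y)$ of dimension vector $\vv$, and this holds for points on every connected component of $Y$, since all of them parametrise modules of dimension vector $\vv$. As $T^\vee$ is locally free and $\cO_y$ has no higher cohomology, $\Phi(\cO_y)$ is concentrated in degree zero, so $\Phi_*[\cO_y]=[M_y]=\sum_{i\in Q_0}v_i[S_i]\in\Knum(A)$ by Lemma~\ref{lem:KnumANCCR}, independently of $y$. Applying $\Phi_*^{-1}$ produces a single class $\kappa\in\Knumc(Y)$ with $[\cO_y]=\kappa$ for every closed point $y\in Y$.

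Now I would argue by contradiction. Suppose $Y$ is disconnected and choose a proper nonempty clopen subscheme $Y_1\subsetneq Y$, together with closed points $y_1\in Y_1$ and $y_2\in Y\setminus Y_1$. The structure sheaf $\cO_{Y_1}$ is a coherent, hence perfect, sheaf on the smooth scheme $Y$, so $[\cO_{Y_1}]\in K(Y)$. Because $Y_1$ is open and closed, one has $\Ext^i_{\cO_Y}(\cO_{Y_1},\cO_{y_1})\cong H^i(\cO_{y_1})$, giving $\chi_Y(\cO_{Y_1},\cO_{y_1})=1$, whereas $\cO_{Y_1}$ and $\cO_{y_2}$ have disjoint support and so $\chi_Y(\cO_{Y_1},\cO_{y_2})=0$. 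But $[\cO_{y_1}]=\kappa=[\cO_{y_2}]$ in $\Knumc(Y)$, and since $\chi_Y$ descends to $\Knumc(Y)$ this forces $\chi_Y(\cO_{Y_1},\cO_{y_1})=\chi_Y(\cO_{Y_1},\cO_{y_2})$, i.e. $1=0$. This contradiction shows that $Y$ is connected.

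The main obstacle is not the final contradiction but the setup: one must know that $T$ is a tilting bundle, and hence that $\Phi$ and $\Psi$ are equivalences, for an arbitrary generic chamber of $\Theta_{\vv}$ and \emph{before} connectedness is established, so that Lemma~\ref{lem:KnumcY} may legitimately be invoked for each $\cM(A,\vv,\theta)$; this is exactly what \cite[Theorem~6.3.1, Remark~6.6.1]{VdB04} provides, the tilting property being local and so insensitive to connectedness. The only subtlety worth checking is the degree-zero concentration of $\Phi(\cO_y)$, which guarantees that its class really is $\vv$ rather than an alternating sum. Once this is in place, the numerical Grothendieck group detects a would-be extra component purely through the Euler form, bypassing the dimension estimate of \cite[Lemma~8.1]{BKR01}; the argument is the skyscraper analogue of the one used in Theorem~\ref{thm:recollement2}, and is simpler precisely because the relevant objects $\cO_y$ are already sheaves with compact support, so no appeal to Serre duality or \cite[Proposition~5.4]{BridgelandMaciocia02} is needed.
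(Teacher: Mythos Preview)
There is a genuine circular gap in your setup. You claim that \cite[Theorem~6.3.1, Remark~6.6.1]{VdB04} furnishes the derived equivalence $\Phi\colon D^b(Y)\xrightarrow{\sim}D^b(A)$ for the \emph{full} moduli space $Y=\cM(A,\vv,\theta)$ before connectedness is known, but Van den Bergh's Theorem~6.3.1 only establishes that the tautological bundle is tilting with endomorphism algebra $A$ on a \emph{single connected component}; his Remark~6.6 is precisely the observation that connectedness follows by the BKR method, so invoking it here is circular. The tilting property is not local in the sense you need: if $Y=Y_1\sqcup Y_2$ were disconnected, then $\End_Y(T)=\End_{Y_1}(T\vert_{Y_1})\times\End_{Y_2}(T\vert_{Y_2})$, and since Van den Bergh gives $\End_{Y_1}(T\vert_{Y_1})\cong A$ on the main component, the canonical map $A\to\End_Y(T)$ cannot be surjective. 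Thus $\Phi$ is not an equivalence onto $D^b(A)$, Lemma~\ref{lem:KnumcY} does not yield $\Knumc(Y)\cong\Knum(A)$, and the inversion ``$\Phi_*^{-1}$'' you use to conclude $[\cO_{y_1}]=[\cO_{y_2}]$ in $\Knumc(Y)$ is unavailable. In fact your own endgame shows why: with $Y$ disconnected one has $\Knumc(Y)=\Knumc(Y_1)\oplus\Knumc(Y_2)$, and the classes $[\cO_{y_1}]$, $[\cO_{y_2}]$ sit in different summands, distinguished exactly by pairing against $[\cO_{Y_1}]$.

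The paper's proof sidesteps this by taking $Y$ to be the one component on which Van den Bergh does supply the equivalence, and then analysing a hypothetical $\theta$-stable module $M$ \emph{not} of the form $\Phi(\cO_y)$ by transporting it to $E:=\Psi(M)\in D^b_c(Y)$. Hom-orthogonality of non-isomorphic stable modules together with the CY3 Serre functor gives $\Hom^i(E,\cO_y)=0$ for $i\notin\{1,2\}$; Bridgeland--Maciocia then forces $E\simeq F[1]$ for a sheaf $F$, whence $[\cO_y]=[E]=-[F]$ in $\Knumc(Y)$, and pushforward to a compactification produces the contradiction. Your Euler-form idea is attractive, but to make it work you must first place yourself on a scheme where $\Phi$ is genuinely an equivalence, and that forces you back to a single component and hence to the paper's line of argument.
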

 \begin{proof}
Suppose there exists a $\theta$-stable $A$-module $M$ of dimension vector $\vv$ that is not of the form $\Phi(\cO_y)$ for some closed point $y\in Y$. Write $E\in D^b_c(Y)$ for the object satisfying $\Phi(E)=M$. We claim that $E$ is quasi-isomorphic to a complex of locally-free sheaves of the form 
 \[
 L^{-2}\stackrel{g}{\longrightarrow} L^{-1},
 \]
 where $L^i$ is in degree $i$.  For this, let $M_1, M_2$ be non-isomorphic $\theta$-stable $A$-modules of dimension vector $\vv$. Any nonzero $\phi\in \Hom_A(M_1,M_2)$ has nonzero kernel and a proper image for dimension vector reasons. Then $\theta(\ker(\phi))>0$ because $M_1$ is $\theta$-stable, but then  $\theta(\im(\phi))<0$ because $M_2$ is $\theta$-stable. This is absurd, so $\Hom_A(M_1,M_2)=0$. Since $A$ is CY3, the category $D^b(A)$ has trivial Serre functor and hence $\Ext_A^3(M_1, M_2)=\Hom_A(M_2, M_1)=0$. Thus, for any $y\in Y$
 \[
 \Hom^i_{D^b(Y)}(E,\cO_y) = \Ext^i_A\big(M,\Phi_\theta(\cO_y)\big) = 0
 \]
 unless $1\leq i\leq 2$. The claim now follows from Bridgeland--Maciocia~\cite[Proposition~5.4]{BridgelandMaciocia02}.  Since $E\in D^b_c(Y)$, we have that $H^{-1}(E)\cong \ker(g)$ is a torsion subsheaf of $L^{-2}$ and hence is zero. It follows that $E\cong \cok(g)[1]$, and hence $[E]=-[\cok(g)]\in \Knumc(Y)$. 
 
 For any closed point $y\in Y$, the $\theta$-stable $A$-modules $\Phi(\cO_y)$ and $M$ have dimension vector $\vv$. This gives  $[\Phi(\cO_y)] = \vv = [M]\in \Knum(A)$, so  
 \[
 [\cO_y] = \vv = [E]\in \Knumc(Y),
 \]
 and hence $[\cO_y] = -[\cok(g)]\in \Knumc(Y)$. Let $j\colon Y\to\overline{Y}$ be a smooth projective completion. By \cite[Proof of Lemma 5.1.1]{BayerCrawZhang16}, we may pushforward numerical classes with compact support to obtain $[j_*\cO_y] = -[j_*\cok(g)] \in \Knum(\overline{Y})$. However, for any sufficiently ample bundle $L$ on $\overline{Y}$, the integers $\chi(j_*\cO_y\otimes L)$ and  $\chi(j_*\cok(g)\otimes L)$ are both positive, a contradiction. 
 \end{proof}
 
 \begin{corollary}
 For generic $\theta\in \Theta_{\vv}$, the fine moduli space $\cM(A,\vv,\theta)$ is a projective crepant resolution of $\Spec R$, and the bundle $T^\vee$ dual to the tautological bundle on $\cM(A,\vv,\theta)$ determines the derived equivalences \eqref{eqn:NCCRequivPhi} and \eqref{eqn:NCCRequivPsi}. 
 \end{corollary}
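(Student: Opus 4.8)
The plan is to read the corollary off Van den Bergh's construction once the connectedness supplied by Proposition~\ref{prop:connected} is available. Van den Bergh's argument (\cite[Theorem~6.3.1, Remark~6.6.1]{VdB04}), which follows the Bridgeland--King--Reid strategy, shows that the tautological family on $\cM(A,\vv,\theta)$ is the kernel of a Fourier--Mukai-type functor $\Phi=\mathbf{R}\Hom(T^\vee,-)$, that this functor is an equivalence, and that $\cM(A,\vv,\theta)\to\Spec R$ is then a projective crepant resolution --- all of this under the single hypothesis that the moduli space is connected. The one ingredient he imports from \cite{BKR01} (his Remark~6.6) is exactly this connectedness. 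So first I would invoke Proposition~\ref{prop:connected} to supply connectedness for every generic $\theta\in\Theta_{\vv}$.

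Next I would recall why, granted connectedness, the universal family realises the equivalence. Full faithfulness of $\Phi$ is checked on skyscrapers via the Bridgeland--Maciocia criterion \cite[Proposition~5.4]{BridgelandMaciocia02}: for $y\neq y'$ one has $\Hom^\bullet_Y(\cO_y,\cO_{y'})=0$, while for $y=y'$ the Calabi--Yau symmetry of the CY3 algebra $A$ pins down the diagonal Ext-groups, exactly as in the homological-dimension and torsion-freeness estimates driving the proof of Proposition~\ref{prop:connected}. That same proof shows every $\theta$-stable $A$-module of dimension vector $\vv$ is isomorphic to some $\Phi(\cO_y)$, which is the essential-surjectivity statement; together with connectedness this upgrades the fully faithful $\Phi$ to an equivalence. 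An equivalence with a CY3 NCCR forces $\cM(A,\vv,\theta)$ to be smooth of dimension three with $\End(T)\cong A$, and Buchweitz--Hille \cite[Proposition~2.6]{BuchweitzHille13} then gives that the dual bundle $T^\vee$ is tilting and induces the equivalences \eqref{eqn:NCCRequivPhi} and \eqref{eqn:NCCRequivPsi}.

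The hard part is entirely the connectedness of the moduli space, and it has already been isolated as Proposition~\ref{prop:connected}, where the numerical Grothendieck group for compact support replaces the dimension bound \cite[Lemma~8.1]{BKR01}. Once connectedness and the identification of $\theta$-stable modules with skyscraper images are granted, the smoothness, crepancy and tilting statements are formal consequences of the Fourier--Mukai equivalence, so the remaining steps amount to assembling Proposition~\ref{prop:connected} with \cite[Theorem~6.3.1]{VdB04} and \cite[Proposition~2.6]{BuchweitzHille13}.
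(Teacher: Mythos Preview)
Your overall assembly is correct and matches the paper's approach: the corollary is stated without proof because it follows immediately from Proposition~\ref{prop:connected}, \cite[Theorem~6.3.1, Remark~6.6.1]{VdB04}, and \cite[Proposition~2.6]{BuchweitzHille13}, precisely the three ingredients you invoke in your final paragraph.

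One point of confusion in your middle paragraph is worth flagging. You call the statement ``every $\theta$-stable $A$-module of dimension $\vv$ is isomorphic to some $\Phi(\cO_y)$'' the essential-surjectivity statement, and then say that ``together with connectedness'' it upgrades $\Phi$ to an equivalence. But that statement \emph{is} connectedness of the moduli space (once the main component $Y$ is fixed); it is not essential surjectivity of $\Phi$ on derived categories, and invoking it alongside connectedness is circular. In Van den Bergh's argument, essential surjectivity of $\Phi$ comes instead from the fact that $\Phi(T^\vee)\cong A$ generates $D^b(A)$. Since you correctly defer the actual argument to \cite[Theorem~6.3.1]{VdB04}, this tangle does not break your proof, but your sketch of \emph{why} that theorem holds has the roles of connectedness and essential surjectivity mixed up.
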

 
For $A=\End_R(M)$ satisfying Assumption~\ref{ass:KrullSchmidt}, Lemma~\ref{lem:KnumcY} establishes that the number of non-isomorphic indecomposable summands of the reflexive module $M$ is equal to the rank of the Grothendieck group of any projective crepant resolution of $\Spec R $. This statement is false without Assumption~\ref{ass:KrullSchmidt}; a counterexample can be constructed using \cite[Lectures on Noncommutative Resolutions, Example 2.17]{WemyssLecturesNCResolutions}.

\bibliographystyle{alpha}

\begin{thebibliography}{BCQV15}

\bibitem[Aus86]{AuslanderRationalSingularities}
Maurice Auslander.
\newblock Rational singularities and almost split sequences.
\newblock {\em Trans. Amer. Math. Soc.}, 293(2):511--531, 1986.

\bibitem[BCQ15]{BCQV}
Raf Bocklandt, Alastair Craw, and Alexander Quintero~V{\'e}lez.
\newblock Geometric {R}eid's recipe for dimer models.
\newblock {\em Math. Ann.}, 361(3-4):689--723, 2015.

\bibitem[BCZ17]{BayerCrawZhang16}
Arend {Bayer}, Alastair Craw, and Ziyu Zhang.
\newblock Nef divisors for moduli spaces of complexes with compact support.
\newblock {\em Selecta Math. (N.S.)}, 23(2):1507-1561, 2017.

\bibitem[BH13]{BuchweitzHille13}
Ragnar-Olaf Buchweitz and Lutz Hille.
\newblock Hochschild (co-)homology of schemes with tilting object.
\newblock {\em Trans. Amer. Math. Soc.}, 365(6):2823--2844, 2013.

\bibitem[BKR01]{BKR01}
Tom Bridgeland, Alastair King, and Miles Reid.
\newblock The {M}c{K}ay correspondence as an equivalence of derived categories.
\newblock {\em J. Amer. Math. Soc.}, 14(3):535--554 (electronic), 2001.

\bibitem[BM02]{BridgelandMaciocia02}
Tom Bridgeland and Antony Maciocia.
\newblock Fourier-{M}ukai transforms for {$K3$} and elliptic fibrations.
\newblock {\em J. Algebraic Geom.}, 11(4):629--657, 2002.

\bibitem[Boc16]{Bocklandt16}
Raf Bocklandt.
\newblock A dimer {ABC}.
\newblock {\em Bull. Lond. Math. Soc.}, 48(3):387--451, 2016.

\bibitem[BP08]{BP08}
Aaron Bergman and Nicholas Proudfoot.
\newblock Moduli spaces for {B}ondal quivers.
\newblock {\em Pacific J. Math.}, 237(2):201--221, 2008.

\bibitem[Bro12]{Broomhead12}
Nathan Broomhead.
\newblock Dimer models and {C}alabi-{Y}au algebras.
\newblock {\em Mem. Amer. Math. Soc.}, 215 (1011):viii+86, 2012.

\bibitem[BRS{\etalchar{+}}16]{WemyssLecturesNCResolutions}
Gwyn Bellamy, Daniel Rogalski, Travis Schedler, J.~Toby Stafford, and Michael
  Wemyss.
\newblock {\em Noncommutative Algebraic Geometry (Mathematical Sciences
  Research Institute Publications)}.
\newblock Cambridge University Press, 2016.


\bibitem[CH98]{CrawleyBoeveyHolland}
William Crawley-Boevey and Martin~P. Holland.
\newblock Noncommutative deformations of {K}leinian singularities.
\newblock {\em Duke Math. J.}, 92(3):605--635, 1998.

\bibitem[CCL12]{CautisCrawLogvinenko12}
Sabin Cautis, Alastair Craw, and Timothy Logvinenko.
\newblock Derived {R}eid's recipe for abelian subgroups of
  $\text{SL}_3(\mathbb{C})$,
\newblock \emph{J. Reine Angew. Math.}, 727:1-48, 2017.

\bibitem[CL09]{CautisLogvinenko09}
Sabin Cautis and Timothy Logvinenko.
\newblock A derived approach to geometric {M}c{K}ay correspondence in dimension
  three.
\newblock {\em J. Reine Angew. Math.}, 636:193--236, 2009.

\bibitem[CQ12]{CrawQuinterovelez12}
Alastair Craw and Alexander Quintero~V{\'e}lez.
\newblock Cellular resolutions of noncommutative toric algebras from
  superpotentials.
\newblock {\em Adv. Math.}, 229(3):1516--1554, 2012.

\bibitem[Cra05]{Craw05}
Alastair Craw.
\newblock An explicit construction of the {M}c{K}ay correspondence for
  {$A$}-{H}ilb {$\mathbb C^3$}.
\newblock {\em J. Algebra}, 285(2):682--705, 2005.

\bibitem[Cra11a]{Craw11}
Alastair Craw.
\newblock Quiver flag varieties and multigraded linear series.
\newblock {\em Duke Math. J.}, 156(3):469--500, 2011.

\bibitem[Cra11b]{CrawSpecial11}
Alastair Craw.
\newblock The {S}pecial {M}c{K}ay correspondence as an equivalence of derived
  categories.
\newblock {\em Q. J. Math.}, 62:573--591, 2011.

\bibitem[CS08]{CrawSmith08}
Alastair Craw and Gregory~G. Smith.
\newblock Projective toric varieties as fine moduli spaces of quiver
  representations.
\newblock {\em Amer. J. Math.}, 130(6):1509--1534, 2008.

\bibitem[IN99]{ItoNakamura99}
Yukari Ito and Iku Nakamura.
\newblock Hilbert schemes and simple singularities.
\newblock In {\em New trends in algebraic geometry ({W}arwick, 1996)}, volume
  264 of {\em London Math. Soc. Lecture Note Ser.}, pages 151--233. Cambridge
  Univ. Press, Cambridge, 1999.

\bibitem[Ish02]{Ishii02}
Akira Ishii.
\newblock On the {M}c{K}ay correspondence for a finite small subgroup of {${\rm
  GL}(2,\Bbb C)$}.
\newblock {\em J. Reine Angew. Math.}, 549:221--233, 2002.

\bibitem[IU15a]{IshiiUeda09}
Akira Ishii and Kazushi Ueda.
\newblock Dimer models and the special {M}c{K}ay correspondence.
\newblock {\em Geom. Topol.}, 19(6):3405--3466, 2015.

\bibitem[IU15b]{IshiiUeda15}
Akira Ishii and Kazushi Ueda.
\newblock The special {M}c{K}ay correspondence and exceptional collections.
\newblock {\em Tohoku Math. J. (2)}, 67(4):585--609, 2015.

\bibitem[IU16]{IshiiUeda13}
Akira Ishii and Kazushi Ueda.
\newblock Dimer models and crepant resolutions.
\newblock {\em Hokkaido Math. J.}, 45(1):1--42, 2016.


\bibitem[Kar17]{Karmazyn14}
Joseph Karmazyn.
\newblock Quiver {GIT} for varieties with tilting bundles.
\newblock {\em Manuscripta Math.}, 154(1-2):91--128, 2017.

\bibitem[Kid01]{Kidoh01}
Rie Kidoh.
\newblock Hilbert schemes and cyclic quotient surface singularities.
\newblock {\em Hokkaido Math. J.}, 30(1):91--103, 2001.

\bibitem[Kin94]{King94}
Alastair~D. King.
\newblock Moduli of representations of finite-dimensional algebras.
\newblock {\em Quart. J. Math. Oxford Ser. (2)}, 45(180):515--530, 1994.

\bibitem[KIWY15]{KIWY}
Martin Kalck, Osamu Iyama, Michael Wemyss, and Dong Yang.
\newblock Frobenius categories, {G}orenstein algebras and rational surface
  singularities.
\newblock {\em Compos. Math.}, 151(3):502--534, 2015.

\bibitem[KV00]{KapranovVasserot00}
M.~Kapranov and E.~Vasserot.
\newblock Kleinian singularities, derived categories and {H}all algebras.
\newblock {\em Math. Ann.}, 316(3):565--576, 2000.

\bibitem[Log10]{Logvinenko10}
Timothy Logvinenko.
\newblock Reid's recipe and derived categories.
\newblock {\em J. Algebra}, 324(8):2064--2087, 2010.

\bibitem[Muk10]{Mukai2010}
Shigeru Mukai.
\newblock Curves and symmetric spaces, {II}.
\newblock {\em Ann. of Math. (2)}, 172(3):1539--1558, 2010.


\bibitem[NdCS17]{NollaSekiya11}
\'{A}lvaro Nolla~de Celis and Yuhi Sekiya.
\newblock Flops and mutations for crepant resolutions of polyhedral
  singularities.
\newblock {\em Asian J. Math.}, 21(1):1--46, 2017.



\bibitem[Pra17]{PrabhuNaik15}
Nathan Prabhu-Naik.
\newblock Tilting bundles on toric {F}ano fourfolds.
\newblock {\em J. Algebra}, 471(3):348--398, 2017.

\bibitem[Rei97]{Reid97}
Miles Reid.
\newblock Mc{K}ay correspondence, 1997.
\newblock http://lanl.arxiv.org/abs/alg-geom/9702016.

\bibitem[Tap15]{TapiaAmador15}
Jesus Tapia~Amador.
\newblock {\em Combinatorial {R}eid's recipe for consistent dimer models}.
\newblock PhD thesis, University of Bath, 2015.

\bibitem[Tak11]{Takahashi11}
Keisuke Takahashi.
\newblock \emph{On essential representations in the {M}c{K}ay correspondence
  for} $\text{SL}(3,\mathbb{C})$.
\newblock Master's thesis, Nagoya University, 2011.

\bibitem[VdB04a]{VdB04}
Michel {V}an~den Bergh.
\newblock Non-commutative crepant resolutions.
\newblock In {\em The legacy of {N}iels {H}enrik {A}bel}, pages 749--770.
  Springer, Berlin, 2004.

\bibitem[VdB04b]{VdB04Duke}
Michel {V}an~den Bergh.
\newblock Three-dimensional flops and noncommutative rings.
\newblock {\em Duke Math. J.}, 122(3):423--455, 2004.

\bibitem[Wem11a]{Wemyss11}
Michael Wemyss.
\newblock Reconstruction algebras of type {$A$}.
\newblock {\em Trans. Amer. Math. Soc.}, 363(6):3101--3132, 2011.

\bibitem[Wem11b]{WemyssGL211}
Michael Wemyss.
\newblock The {${\rm GL}(2,\Bbb C)$} {M}c{K}ay correspondence.
\newblock {\em Math. Ann.}, 350(3):631--659, 2011.

\bibitem[Wem17]{Wemyss14}
Michael Wemyss.
\newblock Flops and clusters in the homological minimal model program.
\newblock To appear {\em Invent. Math.}, 2017.

\end{thebibliography}

\newcommand{\etalchar}[1]{$^{#1}$}

\end{document}